\theoremstyle{plain}
\newtheorem{theorem}{Theorem}[section]
\newtheorem{remark}[theorem]{Remark}
\theoremstyle{definition}
\theoremstyle{remark}
\numberwithin{equation}{section}
\theoremstyle{plain} 
\DeclareMathOperator{\diver}{div}
\DeclareMathOperator*{\esssup}{ess\,sup}
\pgfplotsset{/pgf/number format/use comma,compat=newest}
\newcommand{\R}{\mathbb{R}}
\newcommand{\N}{\mathbb{N}}
\newcommand{\vect}[1]{\boldsymbol{#1}}
\newcommand{\tens}[1]{\mathsf{#1}}
\newcommand{\di}{{\vect{d}}}
\newcommand{\DD}{{\vect{D}}}
\newcommand{\p}{{\vect{\Pi}}}
\newcommand{\dtilde}{{\tilde{\vect{d}}}}
\newcommand{\vv}{{\vect{v}}}
\newcommand{\zz}{{\vect{\zeta}}}
\newcommand{\uu}{{\vect{u}}}
\newcommand{\UU}{{\vect{U}}}
\newcommand{\hh}{{\vect{h}}}
\newcommand{\ww}{{\vect{w}}}
\newcommand{\ff}{{\vect{f}}}
\newcommand{\abs}[1]{\left\lvert#1\right\rvert}
\newcommand{\norm}[1]{\left\|#1\right\|}
\newcommand{\prt}[1]{\left(#1\right)}
\newcommand{\dt}{\frac{{\rm d}}{{\rm d}t}}
\def\d{{\rm d}}
\def\ddt{\frac{{\rm d}}{{\rm d}t}}
\begin{document}
\title{\textsc{
Global Solutions for Two-Phase Complex Fluids with Quadratic Anchoring in Soft Matter Physics
}}

\author{
\textsc{Giulia Bevilacqua}\\[7pt]
\small Dipartimento di Matematica\\ 
\small Università di Pisa\\ 
\small Largo Bruno Pontecorvo 5, I–56127 Pisa, Italy\\
\small \href{mailto:giulia.bevilacqua@dm.unipi.it}{giulia.bevilacqua@dm.unipi.it}
\and
\textsc{Andrea Giorgini}\\[7pt]
\small  Dipartimento di Matematica\\
\small Politecnico di Milano
\\ 
\small Via E. Bonardi 9, I-20133 Milano, Italy
\\
\small \href{mailto:andera.giorgini@polimi.it}{andrea.giorgini@polimi.it}
}

\date{}

\maketitle

\begin{abstract}
We study a diffuse interface model describing the complex rheology and the interfacial dynamics during phase separation in a polar liquid-crystalline emulsion. More precisely, the physical systems comprises a two-phase mixture consisting in a polar liquid crystal immersed in a Newtonian fluid. Such a composite material is a paradigmatic example of complex fluids arising in Soft Matter which exhibits multiscale interplay. Beyond the Ginzburg-Landau and Frank elastic energies for the concentration and the polarization, the free energy of the system is characterized by a quadratic anchoring term which tunes the orientation of the polarization at the interface. This leads to several quasi-linear nonlinear couplings in the resulting system describing the macroscopic dynamics. In this work, we establish the first mathematical result concerning the global dynamics of two-phase complex fluids with interfacial anchoring mechanism. First, we determine a set of sufficient conditions on the parameters of the system and the initial conditions which guarantee the existence of global weak solutions in two and three dimensions. Secondly, we show that weak solutions are unique and globally regular in the two dimensional case. Finally, we complement our analysis with some numerical simulations to display polarization and interfacial anchoring.
\end{abstract}

\bigskip

\textbf{Mathematics Subject Classification (2020)}: 35D30, 35Q35, 76D03, 76T05

\textbf{Keywords}: Polar liquid-crystalline emulsion, quadratic anchoring, global weak solutions, regularity

\bigskip

\maketitle

\tableofcontents

\section{Introduction}

Design and synthesis of new composite materials with enhanced physical, chemical and mechanical properties are subject of great interest for various applications in many fields of engineering, biology and medicine \cite{10.3389/fmats.2014.00011, kumar2020inverse}. Main examples are complex fluids and soft nanocomposites, which consist of mixtures of liquids or gases containing particles of different substances dispersed within them \cite{drzaic1995liquid, krishnan2010rheology, toor2016self}. 
These materials exhibit engineered
microstructures which arise from the interplay at different length scales, such as molecular assembly, mesoscopic interfacial morphology and macroscopic dynamics. The most representative example is the competition between phase separation (material's ability to spontaneously segregate into two or more different phases) and viscous or elastic response of the material \cite{mezzenga2021grand,  michieletto2022rheology, riahinasab2019nanoparticle, tanaka2022viscoelastic}. 
In the last decades, scientists have extensively developed this approach to design soft-phase materials, such as soft gels, suspensions, colloidal particles, porous materials, bio-polymers and bio-molecules, which have shown vast practical utility in electronics, energy storage, bio-sensors, bio-printing, biomedical devices, tissue engineering, drug delivery, and water/gas separation membranes
\cite{ajayan2006nanocomposite, beecroft1997nanocomposite,  bevilacqua2020faraday, elbert2011liquid, kumar2020inverse}.
Nonetheless, arresting the mixing process at the desired length scale still remains nowadays a fundamental challenge in controlling the microstructures of phase separation materials \cite{fernandez2021putting}.

A thriving method to develop novel nano-composite materials is  polymerization-induced phase separation (PIPS) \cite{elbert2011liquid, kim1993polymerization, tran2017phase}. The composite material consists of a multi-phase mixtures of liquid crystals, namely anisotropic fluids whose constituent molecules exhibit local orientational order, immersed in an isotropic polymer matrix, which are called {\em liquid-crystalline emulsions}. In such mixture, the ultimate pattern of the structured material results in a competition between phase separation and polymerization (cross-linking) (see Figure \ref{fig:PS}). 
At early stages, the cross-linking reaction activates and the mixture becomes unstable at constant temperature, in contrast to the usual thermal quench. Then, phase separation occurs leading the formation of liquid crystalline droplets or regions in the polymer phase \cite{fernandez2021putting}. Meantime, surface anchoring conditions and confinement at the interface can produce macroscopic domains with a defined optic axis, elastic deformations acting on the polymeric phase and a variety of topological defects altering their overall interactions and stability \cite{amundson1998liquid, riahinasab2019nanoparticle}.
 Lately, when polymer domains stiffen by cross-linking, the phase separation is arrested and the microstructures are stabilized. This process enhances light scattering and electro‐optic response of the liquid-crystalline emulsion offering new features suitable for many display applications \cite{doane1986field,kikuchi2002polymer}.
 
 \begin{figure}[htbp]
     \centering     \includegraphics[align=t,width=0.55\textwidth]{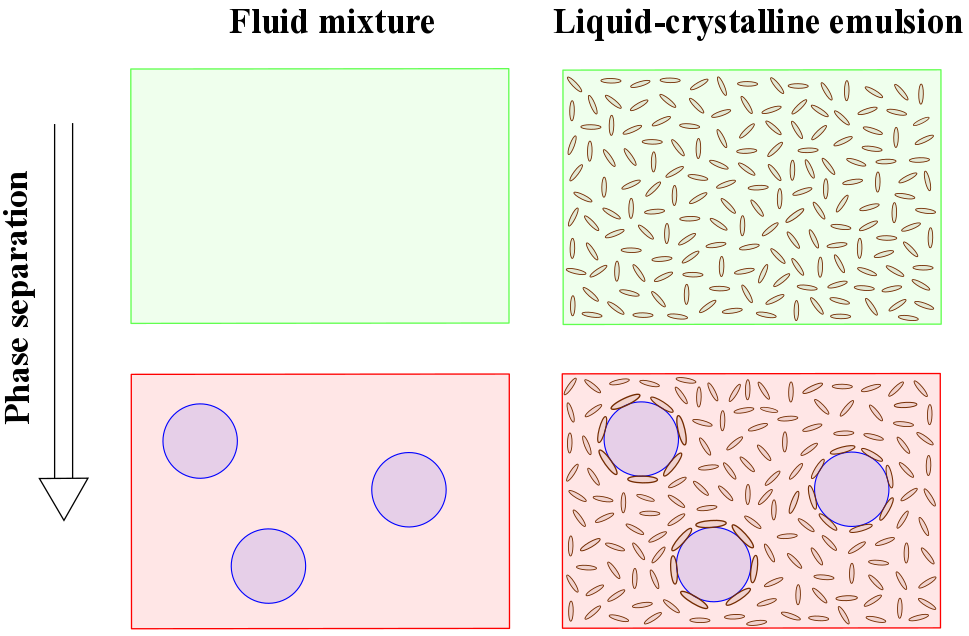}
     \caption{Phase separation in fluid mixtures vs liquid-crystalline emulsions.}
   \label{fig:PS}
 \end{figure}
 
 \subsection{Mathematical formulation}
We investigate a two-phase mixture of a polar liquid crystal and a Newtonian polymeric fluid. From the mathematical viewpoint, the above phenomenon is described through a Diffuse Interface model, which allows to treat the multi-scale interplay (visco-elastic response and interfacial dynamics) through an energy-based framework \cite{giga2017variational}. 
The order parameters of the system are the difference of the concentrations of the two phases $\phi \in [-1,1]$, which distinguishes the liquid-crystalline fluids $(\phi =+1)$ from the isotropic polymer $(\phi=-1)$, and the polarization of the liquid crystal $\di$, namely the average of the orientation of rod-like molecules in mesoscopic volume. The latter is expected to take value $|\di|=0$ in the isotropic polymer (no polarization), whereas $0\leq |\di|\leq 1$ in the liquid crystal phase. Two further state variables are the velocity of the fluid mixture $\uu$ and the pressure of the mixture $p$.

 The macroscopic dynamics of the state variables is derived from the combination of the variational energy formalism with the Least Action Principle and Maximum Dissipation Principle \cite{giga2017variational}. For liquid-crystalline emulsions, the derivation of thermodynamically consistent models has been performed in \cite{cates2018theories, yue2004diffuse}. The resulting equations of motion reads as follows
\begin{empheq}[left=\empheqlbrace]{align}
\label{NSE}
&\rho \partial_t \vect{u} + \rho (\vect{u} \cdot \nabla) \vect{u} - \diver \left(\nu(\phi)D\vect{u}\right) + \nabla p= \diver \sigma_{\rm el},\\
 \label{incompressibility}
 &\diver\,\vect{u} = 0,\\
 \label{var_phi}
 & \partial_t \phi + \vect{u} \cdot \nabla \phi = \Delta \mu,\\
 \label{var_d}
 & \partial_t \vect{d} + \left(\vect{u} \cdot \nabla\right)\vect{d} = -\vect{h},
\end{empheq}
where $\rho$ is the volume-averaged density of the mixture system and $\nu(\phi)$ is the concentration-depending viscosity of the mixture. In addition, the thermodynamic elastic stress $\sigma$, the chemical potential $\mu$ and the molecular field $\hh$ are given by
$$
\sigma_{\rm el}= \left( F(\phi,\di)- \phi \mu -\di \cdot \hh \right)\delta_{ij}-\nabla \phi \otimes \frac{\partial E_{\rm free}(\phi,\di)}{\partial \nabla \phi}
- \nabla \di \odot \frac{\partial E_{\rm free}(\phi,\di)}{\partial \nabla \di}, 
\quad 
\mu =\frac{\delta E_{\rm free}(\phi,\di)}{\delta \phi},
\quad
\hh = \frac{\delta E_{\rm free}(\phi,\di)}{\delta \vect{d}},
$$
where $\delta_{ij}$ is the Kronecker symbol, $F(\phi,\di)$ is the density of total free energy.
Moreover, $D \uu$ is the symmetric gradient $\frac12(\nabla \uu + (\nabla \uu)^T)$, the symbols $\vect{f} \otimes \vect{g}$, for $\vect{f},\vect{g} \in \mathbb{R}^n$, and $\tens{A} \odot \tens{B}$, for $\tens{A},\tens{B} \in \mathbb{R}^{n\times n}$, denote the $n\times n$ matrices whose $(i,j)$th entries are given by $\vect{f}_i \vect{g}_j$ and $(\tens{A}^T B)_{i,j}$, for $1\leq i,j\leq n$, respectively. In particular, if $\tens{A}=\nabla \vect{f}$ and $\tens{B}=\nabla \vect{g}$ for $\vect{f},\vect{g} \in \mathbb{R}^n$, then $(\nabla \vect{f}\odot \nabla \vect{g})_{i,j}= \partial_i \vect{f} \cdot \partial_j \vect{g}$.

The underlying complex rheology is described by three types of free energy: mixing energy of the interface, bulk distortion energy of the nematic phase, and the anchoring energy of the liquid crystal molecules on the interface. 
 Following the historical course, two descriptions of the total free energy have been proposed which differ from the penalization driving $|\di| \approx 0$ in the polymeric phase.  First, in the seminal work \cite{yue2004diffuse}, the authors introduced the following forms of the mixing and polarization free energies 
\begin{equation}
\label{E-mix}
E_{\rm mix}(\phi)=\int_{\Omega} 
\frac{\varepsilon}{2} \abs{\nabla \phi}^2 + \frac{1}{\varepsilon}\Psi(\phi) \, \d x,
\quad 
E^\star_{\rm pol}(\phi, \di) 
=\int_{\Omega} \frac12 (1+\phi) \left(
\frac{\kappa}{2} |\nabla \di|^2 + \frac{\kappa}{4\delta^2}
\left(|\di|^2-1\right)^2 \right)\, \d x,
\end{equation}
where the density of mixing is the Flory-Huggins potential for polymer mixtures given by
\begin{equation}
\label{pot-FH}
\Psi(s)= \textcolor{black}{F(s) - \frac{\Theta_0}{2} s^2} = \frac{\Theta}{2}\left( (1+s)\ln(1+s)+(1-s)\ln(1-s) \right) 
-\frac{\Theta_0}{2}s^2, \quad \forall \, s \in[-1,1],
\end{equation}
where $\Theta$ and $\Theta_0$ are positive parameters and as well as the anchoring energy
\begin{equation}
\label{E-anch}
E_{\rm anch}(\phi,\di)=
\int_{\Omega} \frac{\beta}{2} \abs{\nabla \phi \cdot \vect{d}}^2 \, \d x.
\end{equation}
Here, $\varepsilon$ is the interface parameter, $\kappa$ is the Frank distortion constant, $\delta$ is the relaxation parameter forcing $|\di| \approx 1$ and $\beta$ is related to the magnitude of the anchoring. The latter contribution $E_{\rm anch}$ describes the genuine tendency of liquid crystal molecules to align parallel to the interface (planar anchoring).  
The total free energy is then postulated to be as 
\begin{equation}
\label{eq:energia:Liu}
    E^\star_{\rm free}(\phi, \vect{d}) = E_{\rm mix}(\phi)
+ E^\star_{\rm pol}(\phi, \di)
+E_{\rm anch}(\phi,\di).
\end{equation}
Despite the great influence of the work \cite{yue2004diffuse} in the description of complex fluids, to the best of our knowledge, no theoretical results have been established concerning either the well-posedness of \eqref{NSE}-\eqref{var_d} with the free energy given by \eqref{eq:energia:Liu} nor the study of global minima of \eqref{eq:energia:Liu}. This is due to the degeneracy in the free energy caused by the term $(1+\phi)$. On the other hand, numerical schemes and simulations have been investigated  in \cite{ shen2014decoupled} and \cite{zhao2016decoupled} (see also \cite{guillen2021fluid, guillen2016linear, guillen2020nematic} for a more complex system including vesicles membranes).

More recently, a different form of the polarization energy has been proposed in \cite{cates2018theories}. This reads as
\begin{equation}
\label{E-pol}
E_{\rm pol}(\phi,\di) 
=\int_{\Omega} 
\frac{\kappa}{2}\abs{\nabla \vect{d}}^2
+ \frac{\alpha}{4} \abs{\vect{d}}^4
-\frac{\alpha}{2}(\phi-\phi_{\rm cr}) \abs{\vect{d}}^2 \, \d x,
\end{equation}
where $\phi_{\rm cr}$ is a constant such that $-1<\phi_{\rm cr}<1$. The corresponding total free energy is given by
\begin{equation}
\label{eq:energia}
 E_{\rm free}(\phi, \vect{d}) =  E_{\rm mix}(\phi)
+E_{\rm pol}(\phi,\di)
+E_{\rm anch}(\phi,\di),
\end{equation}
where $E_{\rm mix}(\phi)$ and $E_{\rm anch}(\phi,\di)$ are defined in \eqref{E-mix} and \eqref{E-anch}, respectively.
The selection mechanism of the minima in \eqref{E-pol} differs from the previous one: the coupling with the concentration parameter by the term $\phi-\phi_{\rm cr}$ promote $|\di|=0$ in the subdomain where $\phi<\phi_{\rm cr}$, whereas $|\di| \approx 1$ in the subdomains characterized by $\phi>\phi_{\rm cr}$.

In this paper, we consider the total free energy defined in \eqref{eq:energia}. The corresponding dynamics of the liquid-crystalline emulsion obtained from \eqref{NSE}-\eqref{var_d} reads as
\begin{empheq}[left=\empheqlbrace]{align}
\label{eq:1}
& \partial_t \vect{u} + (\vect{u} \cdot \nabla) \vect{u} - \diver \left(\nu(\phi)D\vect{u}\right) + \nabla p^\star =
- \varepsilon \diver\left(\nabla \phi \otimes \nabla \phi\right) -\kappa \diver \left(\nabla \vect{d} \odot \nabla \vect{d}\right) - \beta \diver \left((\vect{d}\cdot \nabla \phi) \nabla \phi\otimes \vect{d} \right),\\
 \label{eq:2}
 &\diver\,\vect{u} = 0,\\
 \label{eq:3}
 & \partial_t \phi + \vect{u} \cdot \nabla \phi = \Delta \mu,\\
\label{eq:4}
	&\mu = -\varepsilon \Delta \phi + \frac{1}{\varepsilon}\Psi'(\phi) -\frac{\alpha}{2} \abs{\vect{d}}^2- \beta\diver\left( (\nabla \phi \cdot \vect{d}) \vect{d}\right),\\
 \label{eq:5}
 & \partial_t \vect{d} + \left(\vect{u} \cdot \nabla\right)\vect{d} = -\vect{h},\\
 \label{eq:6}
	& \vect{h} = -\kappa \Delta \vect{d} +\alpha \abs{\vect{d}}^2 \vect{d} - \alpha (\phi-\phi_{\rm cr})\vect{d} + \beta \left(\vect{d}\cdot \nabla \phi\right)\nabla \phi.
\end{empheq}
Here, we have set $\rho=1$ and the redefined pressure is  
$$
p^\star = p- \left(\frac{\varepsilon}{2} \abs{\nabla \phi}^2 + \frac{1}{\varepsilon}\Psi(\phi) +\frac{\kappa}{2}\abs{\nabla \vect{d}}^2
+ \frac{\alpha}{4} \abs{\vect{d}}^4
-\frac{\alpha}{2}(\phi-\phi_{\rm cr}) \abs{\vect{d}}^2 + \frac{\beta}{2}|\nabla \phi \cdot \di|^2\right) 
+\phi \mu 
+ \di \cdot \hh.
$$
The total energy associated with \eqref{eq:1}-\eqref{eq:6} is
\begin{equation}
\label{Etot-Ekin}
E_{\rm tot}(\uu,\phi,\di)= E_{\rm kin}(\uu)+E_{\rm free}(\phi,\di), \quad \text{where} \quad E_{\rm kin}(\uu)=\int_\Omega \frac12 |\uu|^2 \,\d x.
\end{equation}
For simplicity, we will study the system \eqref{eq:1}-\eqref{eq:6} subject to periodic boundary conditions on $\mathbb{T}^n=(-\pi,\pi)^n$, with $n=2,3$.

The system \eqref{eq:1}-\eqref{eq:6} might be regarded as a combination of two famous models: the Ericksen-Leslie system \cite{ lin2000existence, lin1995nonparabolic, wu2013general} for nematic liquid crystals and the Model H for binary fluids \cite{abels2009diffuse, giorgini2019uniqueness} in presence of surface tension effects. However, due to the novel couplings  resulting from the anchoring of the liquid crystal molecules at the interface, \eqref{eq:1}-\eqref{eq:6} is highly non-linear than the two above-mentioned separated models. For instance, while the classical convective Cahn-Hilliard equation is a semilinear parabolic equation (\eqref{eq:3}-\eqref{eq:4} with $\alpha=\beta=0$), the modified Cahn-Hilliard system in \eqref{eq:3}-\eqref{eq:4} turns into a quasi-linear parabolic equation. Indeed, the nonlinear term $\diver((\nabla\phi \cdot \di) \di)$ has same order of derivatives of the diffusive term $\varepsilon \Delta \phi$. To the best of our knowledge, no theoretical results have been discussed yet regarding  system \eqref{eq:1}-\eqref{eq:6}.  On the contrary, numerical simulations have been presented in \cite{chen2020numerical, sui2021second, zhao2016numerical}.

Notwithstanding its complicated nonlinear nature, it is worth mentioning that the system \eqref{eq:1}-\eqref{eq:6} is a simplified version of the model resulting from \cite{cates2018theories}. For example, the motion of the polarization \eqref{eq:5} usually takes into account rigid rotation of molecules and stretching of molecules by the flow, which are modeled by $-\Omega \cdot \di$, where $\Omega$ is the anti-symmetric part of $\nabla \uu$, and $D \uu \cdot \di$, respectively. We also mention that  the polar order $\di$ can be replaced by a traceless second rank tensor $\tens{Q}$ following the Beris-Edwards theory for nematic liquid crystals (see \cite[Sections 9.1 and 10.2]{cates2018theories}).
Also, an alternative of the planar anchoring energy \eqref{E-anch} is the homeotropic anchoring promoting different alignment at the interface then tangential direction (see \cite{yue2004diffuse}), which is defined by
$$
E^{\rm H}_{\rm anch}(\phi,\di)=
\int_{\Omega} \frac{\beta}{2} \left( |\di|^2 |\nabla \phi|^2 - \abs{\nabla \phi \cdot \vect{d}}^2 \right) \, \d x.
$$
Moreover, it would be interesting to generalize the system \eqref{eq:1}-\eqref{eq:6} for the case of liquid-crystalline emulsions with unmatched densities (i.e. $\rho_1\neq \rho_2$, thereby $\rho\neq 1$) in the spirit of \cite{abels2012thermodynamically}.
Finally, the study of system \eqref{eq:1}-\eqref{eq:6} is a first step towards more complex models describing active liquid-crystalline emulsions in biological systems (e.g. rod-like bacteria and rod-like self-propelled colloids) proposed in \cite[Section 12]{cates2018theories}. For instance, a primary example is motility-induced phase separation: an active version of \eqref{eq:1}-\eqref{eq:6} has shown great potential to describe cell crawling dynamics based on F-actin polymerization and actomyosin contractility \cite{tjhung2015minimal}. 

\subsection{Main result}
\label{sub:mr}

Let us first state our main assumptions:
\begin{itemize}
    \item[(A.1)] The domain is $\Omega=(-\pi,\pi)^n$ for $n=2,3$. 
    
    \item[(A.2)] The parameters $\alpha$, $\beta$, $\varepsilon$, $\kappa$ are positive constants. The relaxation parameter $\phi_{\rm cr}\in (-1,1)$ and the density $\rho=1$. The two parameters related to the temperature $\Theta$ and the critical temperature $\Theta_0$ satisfy the conditions $0\leq \Theta <\Theta_0$.

    \item[(A.3)] The viscosity function $\nu \in W^{1,\infty}(\mathbb{R})$ such that 
$0<\nu_\ast \leq\nu(s)\leq\nu^\ast$
for all $s\in\mathbb{R}$.

    \item[(A.4)] The homogeneous density of mixing $\Psi(\phi)$ is given by \eqref{pot-FH}.

    \item[(A.5)] The spatial average of $\uu_0$ is null, i.e. $\overline{\uu_0}=\mathbf{0}$. 
\end{itemize}

Before proceeding with the main result, 
let us consider the function $g: [-1,1]\times (0,\infty)$ defined by
\begin{equation}
    \label{eq:g}
    g(s, w):= \frac{1}{\varepsilon} \left(\frac{\Theta}{2}\left( (1+s)\ln(1+s)+(1-s)\ln(1-s) \right) 
-\frac{\Theta_0}{2}s^2\right) - \frac{\alpha}{2} (s- \phi_{\rm cr}) w^2 + \frac{\alpha}{4} w^4.
\end{equation}
Here, $w$ must be interpreted as $\abs{\di}$.
We notice that $g(s,w)\geq E_0$, where the constant $E_0$ might be negative, and $g$ attains the value $E_0$ in at least one point for any choice of the {\color{black} parameters satisfying ${\rm (A.2)}$}. Then, it follows that $E_{\rm tot}(\uu,\phi,\di)- (2\pi)^n E_0\geq 0$ for any $(\uu,\phi,\di) \in D(E_{\rm tot})=\lbrace (\vv, \psi, \vect{p}) \in \dot{L}^2_\sigma(\Omega)\times H^1(\Omega)\times H^1(\Omega): 
\| \psi\|_{L^\infty(\Omega)}\leq 1\rbrace$.
In general, the structure and the cardinality of local and/or global minima are related to the choice of the parameters (see Figure \ref{fig:minimi}). 
\begin{figure}[htbp]
		\centering
		\includegraphics[width=0.95\textwidth]{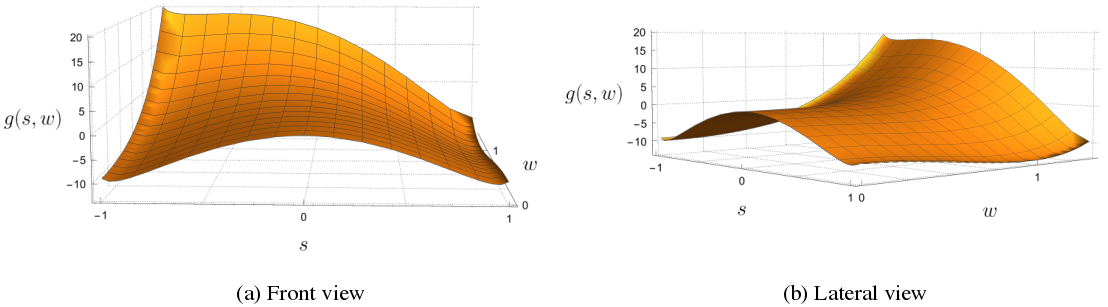}
	\caption{Plot of the function $g(s,w)$ defined in \eqref{eq:g} with parameters $\varepsilon = 0.05, \alpha = 15, \Theta = 1.5, \Theta_0 = 3$ and $\phi_{\rm cr} = 0$: (a) front view and (b) lateral view. In this case, there exists a global minimum $\left(\overline{s},\sqrt{\overline{s}}\right)$, where $\frac{\Theta}{2\varepsilon}\ln(\frac{1+\overline{s}}{1-\overline{s}})= \left( \frac{\alpha}{2}+\frac{\Theta_0}{\varepsilon}\right)\overline{s}$, whereas $(-s_{\rm CH},0)$ is a local minimum, where $\pm s_{\rm CH}$ are the minima of $g(s,0)=\Psi(s)$ (cf. \eqref{pot-FH}).}
 \label{fig:minimi}
\end{figure}

We now present the main result of this work. We refer the reader to Section \ref{Notation} for the notation.

\begin{theorem}
\label{MR}
Let the assumptions $\rm{(A.1)-(A.5)}$ hold. Suppose that $\uu_0 \in \dot{L}^2_\sigma(\Omega)$, $\phi_0 \in H^1(\Omega)\cap L^\infty(\Omega)$ with {\color{black} $\| \phi_0\|_{L^\infty(\Omega)}\leq 1$} and $|\overline{\phi_0}|<1$, $\di_0 \in H^1(\Omega)\cap L^\infty(\Omega)$, and 
\begin{equation}
\label{Key-ass}
\min \lbrace \varepsilon, \kappa \rbrace
> 3^\frac34\beta C_\Omega^2 \mathcal{D}_\infty^\frac32  \quad \text{if } \, n=2,3, \quad \text{or} \quad
\min \lbrace \varepsilon, \kappa \rbrace
> 3^\frac34\beta \widetilde{C_\Omega}^2 \mathcal{D}_\infty 
        \frac{\left(E_{\rm tot}(\uu_0, \phi_0, \di_0)-(2\pi)^n E_0 \right)^\frac12}{\varepsilon^\frac14\kappa^\frac14} 
        \quad \text{if } \, n=2,
\end{equation}
where $C_\Omega$ and $\widetilde{C_\Omega}$ are defined in \eqref{GN} and \eqref{LADY}, respectively, and $\mathcal{D}_\infty= \max \left\lbrace \| \di_0\|_{L^\infty(\Omega)}, \sqrt{1-\phi_{\rm cr}} \right\rbrace$.
Then, there exists a global weak solution $(\uu, \phi, \di, \mu, \vect{h})$ such that:
\begin{itemize}
    \item The following regularity hold
    \begin{equation}
    \label{W:regularity}
    \begin{split}
        & \uu \in L^\infty(0,\infty;\dot{L}_\sigma^2(\Omega))\cap L^2(0,\infty;\dot{H}^1_\sigma(\Omega)), \quad
        \partial_t \uu \in L_{\rm uloc}^\frac{4}{d}([0,\infty); \dot{H}^1_\sigma(\Omega)'),\\
        & \phi \in L^\infty(0,\infty; H^1(\Omega)) \cap L_{\rm uloc}^2([0,\infty);H^2(\Omega)),
        \quad 
        \partial_t \phi \in L^2(0,\infty; H^1(\Omega)'),\\
        & \phi \in L^\infty(\Omega \times (0,\infty)) 
        \text{ such that } |\phi(x,t)|<1 \ \text{a.e. in } \Omega \times (0,\infty),\\
        &\Psi'(\phi)\in L_{\rm uloc}^2([0,\infty); L^p(\Omega)), \text{ where } {\color{black} p=6 \text{ if } n=3, \text{ any } \, 2 \leq p <\infty \text{ if } n=2},\\
        &\mu \in L_{\rm uloc}^2([0,\infty);H^1(\Omega)), \quad \nabla \mu \in L^2(\Omega \times (0,\infty)),\\
        &\di \in L^\infty(0,\infty; H^1(\Omega)) \cap L_{\rm uloc}^2([0,\infty);H^2(\Omega)),\\
        &\di \in  L^\infty(\Omega \times (0,\infty))
        \text{ such that } |\di(x,t)|\leq \mathcal{D}_\infty \ \text{a.e. in } \Omega \times (0,\infty),\\
        &\vect{h} \in L^2(0,\infty;L^2(\Omega)).
    \end{split}
    \end{equation}
    
    \item The solution $(\uu, \phi, \di, \mu, \vect{h})$ {\color{black} solves \eqref{eq:1}-\eqref{eq:6}} in the following weak sense:
    \begin{align}
        \label{W:weak-s:u}
        &
        \langle \partial_t \uu, \vv \rangle_{\dot{H}^1_\sigma(\Omega)}
        + \left( (\uu\cdot \nabla) \uu, \vv \right)
        + (\nu(\phi)D \uu, D\vv)
        = -(\phi \nabla \mu, \vv)
        + ( (\nabla \di)^T \hh ,\vv),
        \\
        \label{W:weak-s:phi}
        &
        \langle \partial_t \phi, v \rangle_{H^1(\Omega)}
        + \left( \uu\cdot\nabla \phi, v\right) + (\nabla \mu,\nabla v)=0,
    \end{align}
    for all $\vv \in \dot{H}^1_{\sigma}(\Omega)$, 
    $v \in H^1(\Omega)$ and almost everywhere in $(0,\infty)$, as well as
        \begin{align}
        \label{W:weak-s:mu}
        &\mu=
        -\varepsilon \Delta \phi
        + \frac{1}{\varepsilon}
        \Psi'(\phi)
        - \frac{\alpha}{2} |\di|^2
        - \beta\diver \left( \left( \nabla \phi \cdot \di \right) \di \right),
        \quad \text{a.e. in } \Omega \times (0,\infty),
        \\
        \label{W:weak-s:d}
         &\partial_t \di + \left( \uu\cdot \nabla \right) \di = \hh, 
        \quad 
        \hh = \kappa \Delta \di -\alpha \abs{\di}^2 \di + \alpha (\phi-\phi_{\rm cr})\di -\beta (\di \cdot \nabla \phi)\nabla \phi 
        \quad \text{a.e. in } \Omega \times (0,\infty).
    \end{align}
    
    \item The solution satisfies $\uu(\cdot, 0)=\uu_0(\cdot)$, $\phi(\cdot, 0)=\phi_0(\cdot)$, $\di(\cdot,0)=\di_0(\cdot)$ in $\Omega$.
    
    \item The energy inequality
    \begin{equation}
        \label{MR:energy:in}
        \begin{split}
        &E_{\rm tot}(\uu(t), \phi(t), \di(t))+
        \int_s^t \left\| \sqrt{\nu(\phi(\tau))} D\uu(\tau)\right\|_{L^2(\Omega)}^2 
        +\| \nabla \mu(\tau)\|_{L^2(\Omega)}^2
        +\| \vect{h} (\tau) \|_{L^2(\Omega)}^2 
        \, \d \tau 
        \\
        &\quad \leq E_{\rm tot}(\uu(s), \phi(s), \di(s))
        \end{split}
    \end{equation}
holds for all $t \in [s,\infty)$ and almost all $s \in [0,\infty)$, including $s=0$. If $n=2$, \eqref{MR:energy:in} holds as equality. 
\end{itemize}
Furthermore, we have the following additional results: 
\begin{itemize}
    \item If $n=2$, the weak solutions are unique. 

    \item If $n=2$, assume that $\uu_0 \in \dot{H}^1_\sigma(\Omega)$, $\phi_0 \in H^2(\Omega)$ such that {\color{black} $|\overline{\phi_0}|< 1$ } and 
    $-\varepsilon\Delta \phi_0 
    +\frac{1}{\varepsilon}\Psi'(\phi_0)-\beta \diver\left((\nabla \phi_0\cdot \di_0)\di_0\right) \in H^1(\Omega)$,
    and $\di_0 \in H^2(\Omega)$. Then, the weak solution satisfies the additional regularity
   \begin{equation}
    \label{SS:regularity}
    \begin{split}
        &\uu \in L^\infty(0,\infty;\dot{H}^1_\sigma(\Omega))\cap L_{\rm uloc}^2([0,\infty); \dot{H}^2_\sigma(\Omega)), \quad
        \partial_t \uu \in L_{\rm uloc}^2([0,\infty); \dot{L}^2_\sigma(\Omega)),\\
        &\phi \in L^\infty(0,\infty; H^2(\Omega)),
        \quad 
        \partial_t \phi \in L^2(0,\infty; H^1(\Omega)),\quad
        \Psi'(\phi)\in L^\infty(0,\infty; L^p(\Omega)), \\
        &\mu \in L^\infty(0,\infty;H^1(\Omega)),\\
        &\di \in L^\infty(0,\infty; H^2(\Omega)) \cap L_{\rm uloc}^2([0,\infty);W^{2,p}(\Omega)), \quad \partial_t \di \in L^\infty(0,\infty;L^2(\Omega))\cap L^2_{\rm uloc}([0,\infty); H^1(\Omega)),\\
        &\vect{h} \in L^\infty(0,\infty;L^2(\Omega))
        \cap L^2_{\rm uloc}([0,\infty); H^1(\Omega)),
    \end{split}
    \end{equation}
    for any $2 \leq p <\infty$.
\end{itemize}
\end{theorem}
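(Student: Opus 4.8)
The plan is to construct a solution by a regularization-and-compactness scheme, read off the bounds in \eqref{W:regularity} from a set of uniform a priori estimates anchored on the energy law \eqref{MR:energy:in}, and then treat the two-dimensional refinements separately.

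\textbf{Approximation.} First I would regularize the singular part of $\Psi$ by a Yosida/logarithmic truncation $\Psi_\lambda$ making $\Psi'_\lambda$ globally Lipschitz, and build approximate solutions by a Faedo--Galerkin discretization for $\uu$ and $\di$ coupled with the Cahn--Hilliard pair $(\phi,\mu)$. Because the chemical potential \eqref{W:weak-s:mu} carries the quasilinear anchoring term, it is convenient to view the $\phi$-equation through the uniformly elliptic operator $\phi\mapsto-\diver\!\big((\varepsilon\tens{I}+\beta\,\di\otimes\di)\nabla\phi\big)$, whose ellipticity constant is exactly $\varepsilon$ and whose upper bound is $\varepsilon+\beta\,\mathcal{D}_\infty^2$. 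Local-in-time solvability of the truncated finite-dimensional system is standard ODE theory; the estimates below make the solutions global and uniform in the discretization and in $\lambda$.

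\textbf{Energy law and pointwise bounds.} The backbone is the energy identity: performing the standard energy testing (the momentum equation \eqref{eq:1} against $\uu$, the Cahn--Hilliard equation \eqref{eq:3} against $\mu$, the polarization equation \eqref{eq:5} against $\hh$, and exploiting the variational structure of $\mu$ and $\hh$) all transport and stress couplings cancel and yield $\ddt E_{\rm tot}+\|\sqrt{\nu(\phi)}D\uu\|^2+\|\nabla\mu\|^2+\|\hh\|^2=0$. Together with $E_{\rm tot}-(2\pi)^nE_0\ge0$ (from \eqref{eq:g}) this gives the $L^\infty$ and $L^2$ bounds of \eqref{W:regularity} for $\uu,\phi,\di,\nabla\mu,\hh$. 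I would then establish the pointwise bounds $|\phi|\le1$ and $|\di|\le\mathcal{D}_\infty$; the latter follows from a maximum-principle argument on $|\di|^2$, where the anchoring force contributes $-\beta(\di\cdot\nabla\phi)^2\le0$ and, using $\phi\le1$, the reaction term becomes strictly dissipative as soon as $|\di|^2>1-\phi_{\rm cr}$, producing exactly $\mathcal{D}_\infty=\max\{\|\di_0\|_{L^\infty},\sqrt{1-\phi_{\rm cr}}\}$.

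\textbf{The main obstacle.} The crux is the $H^2$ control of $\phi$ and $\di$ and the bound $\Psi'(\phi)\in L^2_{\rm uloc}L^p$, because anchoring creates a top-order coupling: in \eqref{W:weak-s:mu} the term $-\beta\diver((\nabla\phi\cdot\di)\di)$ has principal part $-\beta\,\di^{T}\nabla^2\phi\,\di$, of the same order as $\varepsilon\Delta\phi$, while in \eqref{W:weak-s:d} the force $\beta(\di\cdot\nabla\phi)\nabla\phi$ couples back through $\|\nabla\phi\|_{L^4}^2$. I would estimate $\|\Delta\phi\|_{L^2}$ and $\|\Delta\di\|_{L^2}$ simultaneously: schematically, the elliptic relation for $\phi$ gives $(\varepsilon-\beta\mathcal{D}_\infty^2)\|\nabla^2\phi\|\lesssim\|\mu\|+\tfrac1\varepsilon\|\Psi'(\phi)\|+\dots$, while $\hh=-\kappa\Delta\di+\dots$ gives $\kappa\|\Delta\di\|\lesssim\|\hh\|+\beta\mathcal{D}_\infty\|\nabla\phi\|_{L^4}^2+\dots$. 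Controlling $\|\nabla\phi\|_{L^4}^2$ by the Gagliardo--Nirenberg inequalities \eqref{GN}/\eqref{LADY} (whence the constants $C_\Omega,\widetilde{C_\Omega}$ and the factor $3^{3/4}$) and reinserting it, the smallness hypothesis \eqref{Key-ass} is precisely what lets the top-order anchoring terms be absorbed into $\varepsilon\|\nabla^2\phi\|^2+\kappa\|\nabla^2\di\|^2$, closing the bound. This is where I expect the real work to lie, and \eqref{Key-ass} is indispensable here. A monotone-operator estimate on the truncated potential then yields the uniform $\Psi'_\lambda(\phi)\in L^2_{\rm uloc}L^p$ bound and, in the limit, the strict separation $|\phi|<1$.

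\textbf{Passage to the limit, energy inequality, and 2D refinements.} With these bounds plus $\partial_t\phi\in L^2H^1(\Omega)'$ and $\partial_t\uu\in L^{4/d}_{\rm uloc}\dot H^1_\sigma(\Omega)'$, Aubin--Lions gives strong convergence of $\phi,\di$ in $L^2H^1$ and of $\uu$ in $L^2L^2$, enough to pass to the limit in every quasilinear product and, by monotonicity, to recover the singular $\Psi'(\phi)$; the energy inequality \eqref{MR:energy:in} then follows from weak lower semicontinuity, and in two dimensions the extra regularity upgrades it to an identity. For uniqueness when $n=2$ I would subtract two solutions and run an energy estimate measuring $\uu,\di$ in $L^2$ and $\phi$ in a Cahn--Hilliard-adapted norm (e.g. $\dot H^{-1}$), controlling the quasilinear and anchoring differences by the two-dimensional Ladyzhenskaya/Agmon inequalities together with the $L^\infty$ and $H^2$ bounds, and concluding by Grönwall. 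Finally, for the strong regularity \eqref{SS:regularity} I would differentiate \eqref{W:weak-s:d} in time, test with $\partial_t\di$, combine it with an $H^1$-estimate for $\uu$ and a further $H^2$-estimate for $\phi$, and bootstrap via the 2D Gagliardo--Nirenberg inequalities to reach $\uu\in L^\infty\dot H^1_\sigma$, $\phi\in L^\infty H^2$, $\di\in L^\infty H^2\cap L^2_{\rm uloc}W^{2,p}$, with \eqref{Key-ass} again guaranteeing that the top-order anchoring couplings remain subordinate to diffusion throughout.
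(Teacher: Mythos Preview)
Your overall strategy is sound, but the central estimate---the simultaneous $H^2$ control of $\phi$ and $\di$---is not carried out as the paper does, and your scheme would not produce the stated condition \eqref{Key-ass}. You propose to treat the anchoring term in \eqref{W:weak-s:mu} as a perturbation of $-\varepsilon\Delta\phi$, yielding a coefficient like $\varepsilon-\beta\mathcal{D}_\infty^2$, and separately bound $\beta(\di\cdot\nabla\phi)\nabla\phi$ in \eqref{W:weak-s:d} by $\beta\mathcal{D}_\infty\|\nabla\phi\|_{L^4}^2$. That route requires $\varepsilon>\beta\mathcal{D}_\infty^2$ from the first step, and the second step, after \eqref{GN}, gives a term linear in $\|\phi\|_{H^2}$: neither the exponent $\mathcal{D}_\infty^{3/2}$ nor the factor $3^{3/4}$ nor the symmetric role of $\varepsilon$ and $\kappa$ in \eqref{Key-ass} can emerge this way. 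The paper instead tests $\mu$ by $-\Delta\phi$ and $\hh$ by $\Delta\di$ \emph{and adds the results}; the two anchoring contributions then combine algebraically into
\[
\beta\int_\Omega|\nabla(\nabla\phi\cdot\di)|^2\,\d x \;+\; 2\beta\int_\Omega(\nabla\phi\cdot\di)\,\nabla\di:D^2\phi\,\d x,
\]
so the problematic top-order term (two copies of $D^2\phi$) becomes a \emph{positive} square plus a remainder with only one second derivative of $\phi$ and one first derivative of $\di$. Estimating that remainder by \eqref{GN} or \eqref{LADY} gives $2\beta C_\Omega^2\mathcal{D}_\infty^{3/2}\|\phi\|_{H^2}^{3/2}\|\di\|_{H^2}^{1/2}$, and Young's inequality with exponents $(4/3,4)$ is exactly what produces $3^{3/4}$ and the symmetric condition $\min\{\varepsilon,\kappa\}>3^{3/4}\beta C_\Omega^2\mathcal{D}_\infty^{3/2}$. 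This identity is the heart of the proof; without it you cannot match \eqref{Key-ass}.

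Two further points. First, the paper's approximation is not a full Galerkin scheme: it introduces an artificial $p$-Laplacian regularization $\gamma|\nabla\phi|^4$ in the free energy and a viscous term $\delta\,\partial_t\phi$ in $\mu$, solves the resulting system by nested Schauder fixed points (with a semi-Galerkin only for $\uu$), and sends $\delta\to0$ and then $\gamma\to0$; the $\gamma$-term supplies the $W^{1,4}$ compactness on $\phi$ needed to build approximate solutions before \eqref{Key-ass} is invoked in the $\gamma\to0$ limit. Second, your uniqueness sketch ends with a plain Gr\"onwall, but the viscosity difference $(\nu(\phi_1)-\nu(\phi_2))D\uu_2$ forces a logarithmic (Brezis--Gallou\"et type) estimate in the $\dot H^{-1}_\sigma$ norm of $\uu_1-\uu_2$, leading to an Osgood-type inequality $\ddt\mathcal{H}\le \mathcal{F}\,\mathcal{H}\ln(C/\mathcal{H})$ rather than a linear one; the conclusion follows from Osgood's lemma, not Gr\"onwall.
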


\begin{remark}
Since the total mass of the velocity is conserved by \eqref{eq:1}, namely $\overline{\uu}(t)=\overline{\uu_0}$ for any $t\geq 0$, the conclusion of Theorem \ref{MR} remains true if $\overline{\uu_0}\neq \mathbf{0}$ by replacing $\uu$ with $\uu-\overline{\uu_0}$.
\end{remark}

Theorem \eqref{MR} is the first theoretical result in the literature addressing the dynamics of liquid-crystalline emulsions described by \eqref{eq:1}-\eqref{eq:6}. This ensures that {\it weak} solutions (in the above sense) exists globally in time under a criteria on the parameters of the system (or on the initial total energy if $n=2$) in both two and three dimensions. 
As already highlighted, in addition to the difficulties inherited from the Ericksen-Leslie system and the Model H  as in \cite{abels2009diffuse,giorgini2019uniqueness, lin2000existence, lin1995nonparabolic}, the fundamental mathematical challenges in the analysis of \eqref{eq:1}-\eqref{eq:6} originate from the anchoring mechanism which introduces a quasi-linear coupling term in the chemical potential \eqref{eq:4} of the Cahn-Hilliard equation and a further coupling term in \eqref{eq:5}-\eqref{eq:6} that is quadratic in $\nabla \phi$.
For this reason, aiming to show the existence of weak solutions via compactness method relying on  approximating problems, the global (uniform) estimates obtained from the energy equation (cf. \eqref{MR:energy:in}) are not sufficient (cf., e.g., the term $(\di \cdot \nabla \phi)\nabla \phi$). To overcome this issue, we aim to deduce (uniform) Sobolev bounds on $\phi$ and $\di$ from the $L^2$-bounds of $\nabla \mu$ and $\hh$ given by the energy equation (cf. \eqref{MR:energy:in}). To this end, we first show the global $L^\infty$-bound of $\di$ (cf. \eqref{Ex:d:Linf}), and then an estimate of $\phi$ and $\di$ in $L_{\rm uloc}^2([0,\infty);H^2(\Omega))$. The latter arises from a combined estimate exploiting the equations of $\mu$ and $\hh$, which allows to rewrite the sum of the unsigned coupling terms as a positive term and a remainder to be handled (see \eqref{E:g_z:3}-\eqref{E:g_z:8}, in particular \eqref{crucial}). 
In this step, the main assumption \eqref{Key-ass} comes into play to control the remainder term. Roughly speaking, \eqref{Key-ass} requests that either the magnitude of the anchoring is small compared to $\varepsilon$ and $\kappa$ or the initial datum has small initial total energy compared to the other parameters of the system. In order to make the above procedure rigorous, we construct approximated solutions to a regularized model with artificial viscosity and extra gradient integrability in the Cahn-Hilliard equation via nested fixed-point theorems and the semi-Galerkin scheme for the velocity. Next, in the same framework of parameters (see \eqref{Key-ass}), we demonstrate that weak solutions are unique in two dimensions by non-trivially extending the technique devised in \cite{giorgini2019uniqueness}. Lastly, we show that {\it strong} solutions exist globally in two dimensions under additional regularity of the initial data. It is important to point out that no more restrictive assumptions on the parameters than \eqref{Key-ass} are requested. Also in this step, positive terms are recovered from the anchoring couplings by combining two estimates 
for the Cahn-Hilliard and the Ericksen-Leslie models up to lower order remainder terms.
For the sake of brevity, we perform the {a priori} estimate omitting the approximation procedure. We conclude by mentioning that the last result can be easily extended to the three dimensional case on local intervals.

\section{Notation and function spaces}
\label{Notation}
Let $\Omega$ be the $n$-dimensional torus $\mathbb{T}^n=(-\pi, \pi)^n$ with $n = 2,3$. 
Any real-valued functions $f$ defined on $\Omega$ can be written as
$$
f(x) = \sum_{k \in \mathbb{Z}^{n}} \hat{f}_k e^{i k \cdot x}, 
\quad \text{where}
\quad 
\hat{f}_k = \frac{1}{\left(2 \pi\right)^n} \int_{\mathbb{T}^n} e^{-ik\cdot x} f(x)\, \d x
\quad \text{such that}
\quad 
\hat{f}_k = \overline{\hat{f}_k}.
$$
If $\vect{f}$ is an $m$-component
vector-valued function the coefficients $\hat{\vect{f}}_k$ are elements of $\mathbb{C}^m$ such that $\hat{\vect{f}}_k = \overline{\hat{\vect{f}}_k}$. We recall that 
$\{e^{ik \cdot x}\}$ forms an orthogonal basis in $L^2(\Omega)$. For any $s \in \mathbb{N}_0$ and $p \in [1,\infty]$, the symbol $W^{s,p}(\Omega)$ denotes the Sobolev space of functions whose generalized derivative of order $s$ is $p$-integrable. For simplicity of notation,  we will use the notation $W^{s,p}(\Omega)$ also for vector-valued functions with value in $\mathbb{R}^m$ with $m\in \mathbb{N}$. In both cases, we indicate the norms by $\| \cdot \|_{W^{s,p}(\Omega)}$. In particular, we have
$$
\norm{f}_{L^2(\Omega)}^2 = \left(2 \pi\right)^n \sum_{k \in \mathbb{Z}^n}\abs{\hat{f}_k}^2, 
\quad
\norm{f}_{H^1(\Omega)}^2 = \left(2 \pi\right)^n \sum_{k \in \mathbb{Z}^n}\left(1 + \abs{k}^{2 }\right)\abs{\hat{f}_k}^2, 
\quad 
\norm{f}_{H^2(\Omega)}^2 = \left(2 \pi\right)^n \sum_{k \in \mathbb{Z}^n}\left(1 + \abs{k}^{4}\right)\abs{\hat{f}_k}^2.
$$
We also introduce the solenoidal spaces 
$$
\dot{L}^2_\sigma(\Omega)
= \lbrace \vect{f} \in L^2(\Omega): \diver \vect{f}=0, \ \hat{\vect{f}}_0=0\rbrace,
\quad 
\dot{H}^1_\sigma(\Omega)=H^1(\Omega)\cap \dot{L}^2_\sigma(\Omega),
\quad 
\dot{H}^2_\sigma(\Omega)=H^2(\Omega)\cap \dot{L}^2_\sigma(\Omega).
$$
In addition, setting $\overline{f}=\hat{f}_0$ (the spatial average), we recall the useful inequalities
\begin{align}
\label{H1:equiv}
&\| f\|_{H^1(\Omega)}
\leq \left( 2\pi\right)^\frac{n}{2}
\left| \overline{f} \right| + 
\sqrt{2} \| \nabla f\|_{L^2(\Omega)}, \quad \forall \, f \in H^1(\Omega),
\\
\label{H2:equiv}
&\| f\|_{H^2(\Omega)}
\leq \left( 2\pi\right)^\frac{n}{2}
\left| \overline{f} \right| + 
\sqrt{2} \| \Delta f\|_{L^2(\Omega)}, \quad \forall \, f \in H^2(\Omega).
\end{align}
We will also make use of the following interpolation (Gagliardo-Nirenberg and Ladyzhenskaya) inequalities
\begin{align}
\label{GN}
    &\| f\|_{W^{1,4}(\Omega)}
    \leq C_\Omega \| f\|_{L^\infty(\Omega)}^\frac12 
    \| f\|_{H^2(\Omega)}^\frac12, \quad \forall \, f \in H^2(\Omega), \, n=2,3,
    \\
\label{LADY}  
    &\| f\|_{L^4(\Omega)}
    \leq \widetilde{C_\Omega} \| f\|_{L^2(\Omega)}^\frac12 
    \| f\|_{H^1(\Omega)}^\frac12, \quad \forall \, f \in H^1(\Omega), \, n=2.
\end{align}

Let $I \subset [0,\infty)$ be a measurable set and let $X$ be a Banach space. The space $L^q(I;X)$ denotes the set of all strongly measurable $q$-integrable functions with values in the Banach space $X$.  
If $I=[0,\infty)$, we also define the space $L^q_{\rm uloc}([0,\infty);X)$ as the set of all strongly measurable functions $f:[0,\infty)\to X$ such that
$$
\norm{f}_{L_{\rm uloc}^q([0, \infty);X)} = \sup_{t \geq 0} \norm{f}_{L^q(t,t+1;X)} < \infty.
$$
The space $BC(I;X)$ is the Banach space of all bounded and continuous functions $f: I \to X$ equipped with the supremum norm. 
The space $BC_{\rm w}(I;X)$ is the topological vector space of all bounded and weakly continuous functions $f:I\to X$.

\section{Global existence of weak solutions}
\label{sec:exist}

This section is devoted to the proof of the existence of global weak solutions to system \eqref{eq:1}-\eqref{eq:6}. We first study a regularized problem depending on a parameter $\gamma$, then we carry out suitable estimates which are uniform with respect to $\gamma$ allowing the final passage to the limit as $\gamma \to 0$ by compactness methods.

\subsection{Global existence of weak solutions to a regularized problem}
\label{subsec:gamma}

First of all, for $\gamma>0$, we consider the following regularization of the two-phase complex fluids system \eqref{eq:1}-\eqref{eq:6}
\begin{empheq}[left=\empheqlbrace]{align}
\label{eq:NSE}
& \partial_t \vect{u} + (\vect{u} \cdot \nabla) \vect{u} - \diver \left(\nu(\phi)D\vect{u}\right) + \nabla p =
\nonumber\\
& \qquad 
- \varepsilon \diver\left(\nabla \phi \otimes \nabla \phi\right) - \gamma \diver \left(\abs{\nabla \phi}^2 \nabla \phi \otimes \nabla \phi\right)-\kappa \diver \left(\nabla \vect{d} \odot \nabla \vect{d}\right) - \beta \diver \left((\vect{d}\cdot \nabla \phi) \nabla \phi\otimes \vect{d} \right),\\
 \label{eq:incompressibility}
 &\diver\,\vect{u} = 0,\\
 \label{eq:var_phi}
 & \partial_t \phi + \vect{u} \cdot \nabla \phi = \Delta \mu,\\
\label{eq:def_mu}
	&\mu = -\diver\left((\varepsilon + \gamma \abs{\nabla \phi}^2) \nabla \phi\right) + \frac{1}{\varepsilon}\Psi'(\phi) -\frac{\alpha}{2} \abs{\vect{d}}^2- \beta \diver\left( (\nabla \phi \cdot \vect{d}) \vect{d}\right),\\
 \label{eq:var_d}
 & \partial_t \vect{d} + \left(\vect{u} \cdot \nabla\right)\vect{d} = -\vect{h},\\
 \label{eq:def_h}
	& \vect{h} = -\kappa \Delta \vect{d} +\alpha \abs{\vect{d}}^2 \vect{d} - \alpha (\phi-\phi_{\rm cr})\vect{d} + \beta \left(\vect{d}\cdot \nabla \phi\right)\nabla \phi,
\end{empheq}
in $\Omega \times (0,\infty)$, which is subject to periodic boundary conditions on the state variables $(\uu, \phi, \mu, \di, \hh)$.
The total energy of the system \eqref{eq:NSE}-\eqref{eq:def_h} is given by
\begin{equation}
\label{eq:energy}
	E_{\rm tot}^\gamma = E_{\rm tot}^\gamma(\vect{u}, \phi, \vect{d}) = E_{\rm kin}(\vect{u}) + E_{\rm free}^\gamma(\phi, \vect{d}),
\end{equation}
where
\begin{align}
\label{eq:energia_free}
&E_{\rm free}^\gamma(\phi, \vect{d}) := \int_{\Omega} \frac{\gamma}{4}\abs{\nabla \phi}^4 +\frac{\varepsilon}{2} \abs{\nabla \phi}^2 + \frac{1}{\varepsilon}\Psi(\phi) 
+ \frac{\kappa}{2}\abs{\nabla \vect{d}}^2 + \frac{\alpha}{4} \abs{\vect{d}}^4  -\frac{\alpha}{2}(\phi-\phi_{\rm cr}) \abs{\vect{d}}^2 + \frac{\beta}{2} \abs{\nabla \phi \cdot \vect{d}}^2\, \d x.
\end{align}
By direct computation, we observe that
\begin{equation}
\label{USEFUL}
\begin{split}
&- \varepsilon \diver\left(\nabla \phi \otimes \nabla \phi\right)=
-\varepsilon \nabla \left( \frac12 |\nabla \phi|^2 \right) -\varepsilon \Delta \phi \nabla \phi, \\
&- \gamma \diver \left(\abs{\nabla \phi}^2 \nabla \phi \otimes \nabla \phi\right)=
-\gamma \nabla \left( \frac14 |\nabla \phi|^4 \right)
-\gamma \diver \left( |\nabla \phi|^2 \nabla \phi \right),\\
&-\kappa \diver \left(\nabla \vect{d} \odot \nabla \vect{d}^T\right) 
= -\kappa \nabla \left( \frac12 |\nabla \di|^2 \right)- \kappa (\nabla \di)^T \Delta \di, \\
&- \beta \diver \left((\vect{d}\cdot \nabla \phi) \nabla \phi\otimes \vect{d} \right)= -\beta \diver\left( (\di \cdot \nabla \phi) \di\right) \nabla \phi - \beta \left(\di \cdot \nabla \phi\right) \left( \di \cdot \nabla \right) \nabla \phi. 
\end{split}
\end{equation}
In light of \eqref{eq:def_mu} and \eqref{eq:def_h}, we then find
\begin{equation}
\label{RHS-NS}
\begin{split}
&- \varepsilon \diver\left(\nabla \phi \otimes \nabla \phi\right) - \gamma \diver \left(\abs{\nabla \phi}^2 \nabla \phi \otimes \nabla \phi\right)-\kappa \diver \left(\nabla \vect{d} \odot \nabla \vect{d}\right) - \beta \diver \left((\vect{d}\cdot \nabla \phi) \nabla \phi\otimes \vect{d} \right)\\
&= \nabla \left( -\frac{\varepsilon}{2} |\nabla \phi|^2 -\frac{\gamma}{4} |\nabla \phi|^4 -\frac{\kappa}{2} |\nabla \di|^2 \right) + \mu \nabla \phi 
-\frac{1}{\varepsilon} \Psi'(\phi) \nabla \phi + \frac{\alpha}{2} |\di|^2 \nabla \phi \\
&\quad +(\nabla \di)^T \left(\vect{h} -\alpha |\di|^2\di +\alpha (\phi -\phi_{cr}) \di - \beta (\di \cdot \nabla \phi) \nabla \phi\right) 
- \beta \left(\di \cdot \nabla \phi\right) \left( \di \cdot \nabla \right) \nabla \phi \\
&= \nabla \left(\underbrace{ -\frac{\varepsilon}{2} |\nabla \phi|^2 -\frac{\gamma}{4} |\nabla \phi|^4 -\frac{1}{\varepsilon} \Psi(\phi) -\frac{\kappa}{2} |\nabla \di|^2- \frac{\alpha}{4}|\di|^4+ \frac{\alpha}{2}(\phi-\phi_{cr}) |\di|^2 -\frac{\beta}{2} |\di \cdot \nabla \phi|^2}_{\text{density of  } -E^\gamma_{\rm free}(\phi,\vect{d})} \right) + \mu \nabla \phi + (\nabla \di)^T \vect{h}.
\end{split}
\end{equation}
Thus, we can rewrite \eqref{eq:NSE} as follows
\begin{equation}
    \label{eq:identity_RHS_NSE}
     \partial_t \vect{u} +(\vect{u} \cdot \nabla) \vect{u} - \diver \left(\nu(\phi)D\vect{u}\right) + \nabla p^\ast =
    \mu \nabla \phi +(\nabla \di)^T \, \vect{h},
\end{equation}
where the generalized pressure $p^\ast= p^\star + \frac{\gamma}{4}|\nabla \phi|^4$. 
We now formally derive the energy equation for \eqref{eq:NSE}-\eqref{eq:def_h}. The same argument remains true for the system \eqref{eq:1}-\eqref{eq:6} by taking $\gamma=0$.
First,  multiplying \eqref{eq:identity_RHS_NSE} by $\uu$ and integrating over $\Omega$, we have
\begin{align}
\label{eq:firsteq_integrale}
\frac{1}{2} \ddt \int_\Omega \abs{\vect{u}}^2 \, \d x  + \int_\Omega \nu(\phi) \abs{D\vect{u}}^2\, \d x 
&= \int_\Omega \mu \nabla \phi\cdot \uu \, \d x +
\int_\Omega (\nabla \di)^T \hh \cdot \uu \, \d x.
\end{align}
Second, multiplying \eqref{eq:var_phi} by $\mu$, integrating over $\Omega$ and using the definition of $\mu$ in \eqref{eq:def_mu}, we obtain
\begin{align}
\label{eq:var_ph_integrated}
	&\ddt \left[\int_\Omega\left(\frac{\varepsilon}{2} \abs{\nabla \phi}^2 + \frac{1}{\varepsilon}\Psi(\phi) + \frac{\gamma}{4}\abs{\nabla \phi}^4 \right)\, \, \d x\right] + \int_\Omega \mu \vect{u} \cdot \nabla \phi \, \d x +\int_\Omega \abs{\nabla \mu}^2 \, \d x 
 \nonumber \\
	&\quad =\int_\Omega \partial_t \phi \frac{\alpha}{2} \abs{\vect{d}}^2\, \d x +\int_\Omega \beta\partial_t \phi \diver\left((\nabla \phi \cdot \vect{d})\vect{d}\right)\, \d x. \nonumber
\end{align}
Rewriting the terms on the right-hand side, the above equation simplifies into
\begin{equation}
\label{eq:second_eq_integrale}
\begin{split}
	&\dt \left[\int_\Omega\left(\frac{\varepsilon}{2} \abs{\nabla \phi}^2 + \frac{1}{\varepsilon}\Psi(\phi) + \frac{\gamma}{4} \abs{\nabla \phi}^4 - \frac{\alpha}{2} (\phi-\phi_{\rm cr}) \abs{\vect{d}}^2+ \frac{\beta}{2} \abs{\nabla\phi \cdot \vect{d}}^2\right)\, \d x\right]+ \int_\Omega \mu \vect{u} \cdot \nabla \phi \, \d x + \int_\Omega \abs{\nabla \mu}^2 \, \d x  \\
	&\quad = -\int_\Omega \frac{\alpha}{2}(\phi- \phi_{\rm cr}) \partial_t\abs{\vect{d}}^2\, \d x + \beta \int_\Omega (\nabla \phi \cdot \partial_t \vect{d}) (\nabla \phi \cdot \vect{d})\, \d x.
 \end{split}
\end{equation}
Finally, multiplying \eqref{eq:var_d} by $\vect{h}$ defined in \eqref{eq:def_h}, integrating over $\Omega$, we find
\begin{align}
	 \label{eq:third_eq_integrale}
	 &\dt \left[\int_\Omega\left(\frac{\kappa}{2} \abs{\nabla \vect{d}}^2 + \frac{\alpha}{2} \abs{\vect{d}}^4\right)\, \d x\right] 
	 +\int_\Omega \abs{\vect{h}}^2 \, \d x
	 \nonumber\\
	&\quad 
	- \int_\Omega \alpha(\phi-\phi_{\rm cr}) \vect{d}\cdot \partial_t\vect{d}\, \d x 
	 +\int_\Omega \beta (\vect{d}\cdot \nabla \phi) \, (\nabla \phi \cdot\partial_t \vect{d}) \, \d x - \int_\Omega\left(\vect{u} \cdot \nabla\right)\vect{d}\cdot \vect{h}\, \d x = 0.
\end{align}
Summing up \eqref{eq:firsteq_integrale}, \eqref{eq:second_eq_integrale} and \eqref{eq:third_eq_integrale}, we arrive at 
\begin{align}
\label{eq:energia_final}
	&\frac{{\rm d}}{{\rm d}t} E_{\rm tot}^\gamma(\vect{u}, \phi, \vect{d})+\int_\Omega \nu(\phi) \abs{D\vect{u}}^2\, \d x + \int_\Omega \abs{\nabla \mu}^2 \, \d x +\int_\Omega \abs{\vect{h}}^2 \, \d x=0.
\end{align}
Furthermore, we recall the mass conservation for the difference of fluid concentrations that reads
\begin{equation}
    \label{eq:cons_massa}
    |\overline{\phi(t)}| = \frac{1}{|\Omega|} \left|\int_\Omega \phi(x) \, \d x\right| = |\overline{\phi(0)}|,\quad \forall\, t \in [0, +\infty).
\end{equation}

In this section, we study the existence of weak solutions to the system \eqref{eq:NSE}-\eqref{eq:def_h}. We prove the following result
\begin{theorem}
\label{MR:gamma-p}
Let the assumptions $\rm{(A.1)-(A.5)}$ hold. Suppose that $\uu_0 \in \dot{L}^2_\sigma(\Omega)$, $\phi_0 \in W^{1,4}(\Omega)$ with {\color{black} $\| \phi_0\|_{L^\infty(\Omega)}\leq 1$} and $|\overline{\phi_0}|<1$, $\di_0 \in H^1(\Omega)\cap L^\infty(\Omega)$.
Then, there exists a global weak solution $(\uu, \phi, \di, \mu, \vect{h})$ such that:
\begin{itemize}
    \item The following regularity hold
    \begin{equation}
    \label{W:reg}
    \begin{split}
        & \uu \in L^\infty(0,\infty;\dot{L}_\sigma^2(\Omega))\cap L^2(0,\infty;\dot{H}^1_\sigma(\Omega)), \quad
        \partial_t \uu \in L_{\rm uloc}^\frac{4}{d}([0,\infty); \dot{H}^1_\sigma(\Omega)'),\\
        & \phi \in L^\infty(0,\infty;W^{1,4}(\Omega)) \cap L_{\rm uloc}^4([0,\infty);H^2(\Omega)),
        \quad 
        \partial_t \phi \in L^2(0,\infty; H^1(\Omega)'),\\
        & \phi \in L^\infty(\Omega \times (0,\infty)) 
        \text{ such that } |\phi(x,t)|<1 \ \text{a.e. in } \Omega \times (0,\infty),\\
        &\Psi'(\phi)\in L_{\rm uloc}^2([0,\infty); L^p(\Omega)), \text{where } {\color{black} p=6 \text{ if } n=3, \text{ any } \, 2 \leq p <\infty \text{ if } n=2},\\
        &\mu \in L_{\rm uloc}^2([0,\infty);H^1(\Omega)), \quad \nabla \mu \in L^2(\Omega \times (0,\infty)),\\
        &\di \in L^\infty(0,\infty; H^1(\Omega)) \cap L_{\rm uloc}^2(0,\infty;H^2(\Omega)),\\
        &\di \in  L^\infty(\Omega \times (0,\infty))
        \text{ such that } |\di(x,t)|\leq \mathcal{D}_\infty \ \text{a.e. in } \Omega \times (0,\infty),\\
        &\vect{h} \in L^2(0,\infty;L^2(\Omega)).
    \end{split}
    \end{equation}
    
    \item The solution $(\uu, \phi, \di, \mu, \vect{h})$ solves \eqref{eq:NSE}-\eqref{eq:def_h} in the following weak sense:
    \begin{align}
        \label{W:weak:u}
        &
        \langle \partial_t \uu, \vv \rangle_{\dot{H}^1_\sigma(\Omega)}
        + \left( (\uu\cdot \nabla) \uu, \vv \right)
        + (\nu(\phi)D \uu, D\vv)
        = -(\phi \nabla \mu, \vv)
        + ( (\nabla \di)^T \vect{h},\vv),
        \\
        \label{W:weak:phi}
        &
        \langle \partial_t \phi, v \rangle_{H^1(\Omega)}
        + \left( \uu\cdot\nabla \phi, v\right) + (\nabla \mu,\nabla v)=0,
    \end{align}
    for all $\vv \in \dot{H}^1_{\sigma}(\Omega)$, 
    $v \in H^1(\Omega)$ and almost everywhere in $(0,\infty)$, as well as
        \begin{align}
        \label{W:weak:mu}
        &\mu=
        -\diver \left( \left( \varepsilon+ \gamma |\nabla \phi|^2 \right) \nabla \phi \right)
        + \frac{1}{\varepsilon}
        \Psi'(\phi)
        - \frac{\alpha}{2} |\di|^2
        - \beta\diver \left( \left( \nabla \phi \cdot \di \right) \di \right),
        \quad \text{a.e. in } \Omega \times (0,\infty),
        \\
        \label{W:weak:d}
         &\partial_t \di + \left( \uu\cdot \nabla \right) \di = \hh, 
        \quad 
        \hh = \kappa \Delta \di -\alpha \abs{\di}^2 \di + \alpha (\phi-\phi_{\rm cr})\di -\beta (\di \cdot \nabla \phi)\nabla \phi 
        \quad \text{a.e. in } \Omega \times (0,\infty).
    \end{align}
    
    \item The solution satisfies $\uu(\cdot, 0)=\uu_0(\cdot)$, $\phi(\cdot, 0)=\phi_0(\cdot)$, $\di(\cdot,0)=\di_0(\cdot)$ in $\Omega$.
    
    \item The energy inequality
    \begin{equation}
        \label{MR:en:in}
        \begin{split}
        &E_{\rm tot}^\gamma(\uu(t), \phi(t), \di(t))+
        \int_\tau^t \left\| \sqrt{\nu(\phi(s))} D\uu(s)\right\|_{L^2(\Omega)}^2 
        +\| \nabla \mu(s)\|_{L^2(\Omega)}^2
        +\| \vect{h} (s) \|_{L^2(\Omega)}^2 
        \, \d s
        \\
        &\quad \leq E_{\rm tot}^\gamma(\uu(\tau), \phi(\tau), \di(\tau))
        \end{split}
    \end{equation}
holds for all $t \in [\tau,\infty)$ and almost all $\tau \in [0,\infty)$, including $\tau=0$. If $n=2$, \eqref{MR:en:in} holds as equality. 
\end{itemize}
\end{theorem}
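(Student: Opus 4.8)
The plan is to construct the solution by a triple approximation---regularization of the singular potential, a semi-Galerkin discretization of the velocity, and nested fixed points---deriving estimates uniform in the discretization parameters but allowed to depend on the fixed $\gamma>0$.

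I would first replace the singular Flory--Huggins density $\Psi$ by a family of globally $C^2$ approximations $\Psi_\lambda$ of at most quadratic growth whose second derivative is bounded below uniformly in $\lambda$, so that the lower bound for $g$ in \eqref{eq:g}, hence for $E_{\rm tot}^\gamma-(2\pi)^nE_0$, is preserved. For fixed $\lambda$, I would expand $\uu$ in the divergence-free, zero-average Fourier modes of $\mathbb{T}^n$ and project the momentum balance \eqref{eq:identity_RHS_NSE} onto the span $V_m$ of the first $m$ of them, retaining \eqref{eq:var_phi}--\eqref{eq:def_h} as genuine PDEs. The coupled approximate system would be solved by a nested fixed point: given $\uu_m\in C([0,T];V_m)$, the Cahn--Hilliard pair \eqref{eq:var_phi}--\eqref{eq:def_mu}---uniformly parabolic thanks to the monotone principal part $-\diver((\varepsilon+\gamma\abs{\nabla\phi}^2)\nabla\phi)$ and the now-regular potential---together with the convection--reaction--diffusion equation \eqref{eq:var_d}--\eqref{eq:def_h} for $\di$ defines a compact solution map producing $(\phi_m,\di_m)$; substituting into the finite-dimensional ODE system for the Fourier coefficients of $\uu_m$ and invoking Schauder's (or Leray--Schauder's) theorem yields short-time solvability, made global by the bounds below.

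Second, I would establish the uniform estimates collected in \eqref{W:reg}. Testing the projected momentum equation with $\uu_m$, \eqref{eq:var_phi} with $\mu$, and \eqref{eq:var_d} with $\hh$ reproduces the energy identity \eqref{eq:energia_final} at the discrete level; combined with the uniform lower bound on $E_{\rm tot}^\gamma$ this controls $\uu$ in $L^\infty(\dot{L}^2_\sigma)\cap L^2(\dot{H}^1_\sigma)$, $\phi$ in $L^\infty(W^{1,4})$ (via the $\tfrac{\gamma}{4}\abs{\nabla\phi}^4$ term), $\di$ in $L^\infty(H^1)$, and $\nabla\mu,\hh$ in $L^2(L^2)$. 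I would then prove the pointwise bound $\abs{\di}\le\mathcal{D}_\infty$: multiplying \eqref{eq:var_d}--\eqref{eq:def_h} by $\di$ gives a parabolic inequality for $\abs{\di}^2$ in which the anchoring contribution $-\beta(\di\cdot\nabla\phi)^2\le0$ has the right sign and the reaction $-\alpha\abs{\di}^4+\alpha(\phi-\phi_{\rm cr})\abs{\di}^2$ is dissipative once $\abs{\di}^2>1-\phi_{\rm cr}$ (using the $L^\infty$ control of $\phi$), whence a maximum-principle argument yields the bound with the sharp constant recovered as $\lambda\to0$. Finally I would upgrade to the $H^2$-regularity of $\phi$ and $\di$ by reading \eqref{eq:def_mu} and \eqref{eq:def_h} as elliptic identities: testing them by $-\Delta\phi$ and $-\Delta\di$ and summing, the two $\beta$-couplings reorganize into a nonnegative quadratic plus remainders of size $\beta\abs{\nabla\phi}^2$, the latter controlled in $L^2$ exactly by the $W^{1,4}$ bound together with $\abs{\di}\le\mathcal{D}_\infty$; the coercivity supplied by the $\gamma$-term lets this close \emph{without any smallness condition}, and also delivers $\Psi_\lambda'(\phi)\in L^2_{\rm uloc}(L^p)$. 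The $\partial_t$ bounds then follow by comparison in the respective equations.

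Third, I would pass to the limit $m\to\infty$ and then $\lambda\to0$. Aubin--Lions--Simon turns the uniform bounds into strong convergence of $\uu_m$ in $L^2_{\rm loc}(\dot{L}^2_\sigma)$ and of $\phi_m,\di_m$ in $L^2_{\rm loc}(H^1)$, with $\nabla\phi_m\to\nabla\phi$ strongly in $L^p_{\rm loc}$ for every $p<4$ by interpolating the uniform $L^\infty_tW^{1,4}_x$ bound against the strong $L^2$ convergence; the uniform bound on $\Psi_\lambda'(\phi)$ then forces $\abs{\phi}<1$ a.e.\ and identifies $\Psi'(\phi)$, while \eqref{eq:cons_massa} keeps $\abs{\overline{\phi}}<1$. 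The energy inequality \eqref{MR:en:in} follows from weak lower semicontinuity of the dissipation, strengthened to equality when $n=2$ by the additional regularity available there. \emph{The main obstacle} is the limit in the highest-order couplings $\diver(\abs{\nabla\phi}^2\nabla\phi)$ and the anchoring terms $\diver((\nabla\phi\cdot\di)\di)$ and $(\di\cdot\nabla\phi)\nabla\phi$, all of which carry as many derivatives as the diffusion: the first is monotone in $\nabla\phi$ and yields to a Minty-type identification, but the anchoring terms are genuinely quadratic in $\nabla\phi$ and can be passed to the limit only through the strong convergence of $\nabla\phi_m$ furnished by the uniform $H^2$ estimate---which is why the second step is the crux of the argument.
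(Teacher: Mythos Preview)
Your overall strategy---semi-Galerkin for the velocity, nested fixed points for the CH--EL block, energy identity, $L^\infty$ bound on $\di$, $H^2$ elliptic estimates exploiting the $\gamma$-coercivity, then compactness---matches the paper's. Two technical differences are worth noting. First, the paper inserts an \emph{additional} artificial viscosity $\delta\,\partial_t\phi$ into \eqref{eq:def_mu} (so the approximation is $\lambda$, $\delta$, Galerkin for CH, then $N$ for NS), and the inner fixed point is more granular: for fixed $\vv\in V_m$ and fixed $\dtilde$, the viscous Cahn--Hilliard is solved first (Appendix~\ref{app:galerkin}), then the Ericksen--Leslie system, and a Schauder argument in $\dtilde$ closes the CH--EL block before a second Schauder argument in $\vv$ closes the full system. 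The viscosity is what gives $\partial_t\phi\in L^2(L^2)$ at the approximate level, which the paper needs both to run the CH Galerkin scheme and to justify the chain rule $(\partial_t\phi,\mu)=\frac{\d}{\d t}E_{\rm conv}(\phi)+\ldots$ via subgradient theory; your sketch ``uniformly parabolic thanks to the monotone principal part'' glosses over how one actually solves the fourth-order quasilinear CH equation and obtains enough time regularity to derive the energy identity, so this step would need to be fleshed out. Second, for the $H^2$ bounds you propose to test \eqref{eq:def_mu} and \eqref{eq:def_h} by $-\Delta\phi$, $-\Delta\di$ and \emph{sum}, observing that the $\beta$-couplings combine into a nonnegative square plus lower-order remainders; the paper instead treats $\phi$ and $\di$ separately here, absorbing the anchoring remainders into the $\gamma\int|\nabla\phi|^2|D^2\phi|^2$ term. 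Your combined estimate is in fact the identity \eqref{crucial} that the paper reserves for the $\gamma\to0$ passage in Section~\ref{sec:gamma0}; using it already at fixed $\gamma$ is a legitimate variant and arguably cleaner.
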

\begin{proof}
The proof is based on the application of nested fixed point arguments through the Schauder theorem. We first consider the generalized Cahn-Hilliard/Ericksen-Leslie ($\hbox{CH-EL}$) subsystem with a given incompressible velocity field, and then we prove the existence of a weak solution to the full system. 
\medskip

\noindent
{\bf Step 1: Existence of a weak existence to the  $\hbox{CH-EL}$ system.} 
We fix $\vect{v} \in C([0,T];\vect{V}_m)$ where $\vect{V}_m = \hbox{Span}\, \{ \zz_1,\dots,\zz_m\}$, being $\zz_i$ the eigenfunctions of the Stokes operator (see e.g. \cite[Theorem 2.24]{robinson2016three}). The following inverse Sobolev embedding inequalities hold 
\begin{equation}
\label{Rev-SI}
\| \vv\|_{H^1(\Omega)}\leq C_m \| \vv\|_{L^2(\Omega)},\quad
\| \vv\|_{H^2(\Omega)}\leq C_m \| \vv\|_{L^2(\Omega)},\quad
\| \vv \|_{H^3(\Omega)}\leq C_m \| \vv\|_{L^2(\Omega)}, \quad
\forall \, \vv \in \vect{V}_m.
\end{equation} 
We consider the generalized Cahn-Hilliard/Ericksen-Leslie ($\hbox{CH-EL}$) system with divergence-free drift $\vect{v}$
\begin{equation}
    \label{eq:CH_LC}
    \begin{cases}
    \partial_t\phi + \vect{v} \cdot \nabla \phi = \Delta \mu,\\
    \mu = -\diver\left((\varepsilon + \gamma \abs{\nabla \phi}^2)\nabla \phi\right)+ \frac{1}{\varepsilon}\Psi'(\phi) -\frac{\alpha}{2}\abs{\di}^2 -\diver\left(\beta (\nabla \phi \cdot \di)\di\right),\\
    \partial_t \di + (\vect{v} \cdot \nabla)\di = -\vect{h},\\
    \vect{h} = -\kappa \Delta \di +\alpha \abs{\di}^2 \di - \alpha (\phi-\phi_{\rm cr})\di +\beta (\di \cdot \nabla \phi)\nabla \phi,
    \end{cases}
    \quad \text{in } \Omega \times (0,T),
\end{equation}
which is supplemented with periodic boundary conditions. 
We aim to show the existence of a global weak solution to \eqref{eq:CH_LC} through a fixed point argument. To this end, we fix $\delta \in (0,1]$ and a function $\dtilde \in C([0,T];H^1(\Omega))\cap L^\infty(0,T; L^\infty(\Omega)) \cap L^2(0,T;W^{1,4}(\Omega))$  such that
\begin{equation}
    \label{eq:stime_dtilda}
    \begin{aligned}
    &\norm{\dtilde}_{L^\infty(0,T; L^\infty(\Omega))}\leq \mathcal{D}_\infty, &&\norm{\dtilde}_{C([0,T];H^1(\Omega))}\leq \widetilde{C_1}, &&\norm{\dtilde}_{L^2(0,T;W^{1,4}(\Omega))}\leq \widetilde{C_2},
    \end{aligned}
\end{equation}
where $\widetilde{C_i}$ with $i=1,2$ will be chosen later on. We first consider the following viscous generalized Cahn-Hilliard system
\begin{empheq}[left=\empheqlbrace]{align}
\label{eq:VCH_1}
&\partial_t\phi + \vect{v} \cdot \nabla \phi = \Delta \mu,\\
\label{eq:VCH_2}
    &\mu = \delta \,\partial_t \phi-\diver\left((\varepsilon + \gamma \abs{\nabla \phi}^2)\nabla \phi\right)+ \frac{1}{\varepsilon}\Psi'(\phi) -\frac{\alpha}{2}\abs{\dtilde}^2 -\diver\left(\beta (\nabla \phi \cdot \dtilde)\dtilde\right).
\end{empheq}
The existence of a weak solution on $\Omega \times (0,T)$ to the system \eqref{eq:VCH_1} - \eqref{eq:VCH_2} is proven in Appendix \ref{app:galerkin}.
More precisely, Theorem \ref{vp-CH:ws} guarantees that there exist a pair $(\phi,\mu)$ such that
\begin{align}
    \label{VCH:reg1}
    & \phi \in L^\infty(0,T; W^{1,4}(\Omega))\cap L^2(0,T;H^2(\Omega)),\\
    \label{VCH:reg2}
    &\phi \in L^\infty(\Omega \times (0,T)) \, \text{with }
    |\phi(x,t)|<1 \, \text{a.e. in } \Omega \times (0,T),\\
    \label{VCH:reg3}
    &\partial_t \phi \in L^2(0,T; L^2(\Omega)),
    \quad |\nabla \phi|^2 \nabla \phi \in L^2(0,T;H^1(\Omega;\R^n)), \quad 
    F'(\phi)\in L^2(0,T;L^2(\Omega)),
    \\\label{VCH:reg4}
    &\mu \in L^2(0,T;H^1(\Omega)),
\end{align}
{\color{black}where F is the convex part of $\Psi$ as defined in \eqref{pot-FH}}, which satisfies
\begin{align}
\label{VCH:wp1}
    &( \partial_t \phi, \xi )
    - (\phi \vv, \nabla \xi)
    + (\nabla \mu, \nabla \xi)
    =0, \quad \forall \, \xi \in H^1(\Omega), \text{ a.e. in }(0,T),
    \\
    \label{VCH:wp2}
    &\mu=
    \delta \partial_t \phi
    - \diver \left( (\varepsilon+ \gamma |\nabla \phi|^2) \nabla \phi \right) 
    + \frac{1}{\varepsilon} \Psi'(\phi)
    -\frac{\alpha}{2}  |\dtilde|^2 
    - \beta \diver \left( (\nabla \phi \cdot \dtilde) \dtilde \right), \quad \text{a.e. in } \Omega \times (0,T),
\end{align}
as well as $\phi(\cdot,0)=\phi_0$ in $\Omega$. 
Moreover, we have
\begin{equation}
    \label{A:WS_satisfies_th}
    \begin{aligned}
    &\esssup_{t\in [0,T]}
\int_\Omega \abs{\nabla \phi(t)}^4 + \abs{\nabla \phi(t)}^2 + \Psi(\phi(t))\, \d x
+  \int_0^T \| \nabla \mu(s)\|_{L^2(\Omega)}^2 \, \d s
\\
& \quad 
+\int_0^T \| \partial_t \phi(s)\|_{L^2(\Omega)}^2 \, \d s
+  \int_0^T \| \Delta \phi(s)\|_{L^2(\Omega)}^2 \, \d s
+ \int_0^T \int_\Omega
\left| \nabla\left(\abs{\nabla \phi}^2\right) \right|^2 \, \d x
\\
&\leq
 \overline{C} \left(\int_\Omega \abs{\nabla \phi_0}^4 +  \abs{\nabla \phi_0}^2 + \Psi(\phi_0)\, \d x
+  \int_0^T \|\nabla \dtilde(s) \|_{L^4(\Omega)}^2 \, \d s + T\right)
\\
& \quad 
\times \mathrm{exp}\left(\overline{C}T + \overline{C}\int_0^T \| \nabla \dtilde(s)\|_{L^4(\Omega)}^2 + \|\vv(s) \|_{L^2(\Omega)}^2 \, \d s \right),
    \end{aligned}
\end{equation}
where the positive constant $\overline{C}$ depends on $\alpha, \beta, \gamma, \delta, \varepsilon, \Theta_0, n, m, \overline{\phi_0}, D_\infty$, but is independent of $\dtilde$ and $\vv$.

Next, we consider the Ericksen-Leslie $(\hbox{EL})$ system 
\begin{empheq}[left=\empheqlbrace]{align}
\label{eq:LC_1}
&\partial_t \di + (\vv \cdot \nabla)\di = -\vect{h},\\
\label{eq:LC_2}
    &\vect{h} = -\kappa \Delta \di+ \alpha \abs{\di}^2 \di -\alpha(\phi-\phi_{\rm cr}) \di +\beta (\di \cdot \nabla \phi)\nabla \phi.
\end{empheq}
The existence of a solution $\di$ defined on $\Omega \times (0,T)$ to the system \eqref{eq:LC_1}-\eqref{eq:LC_2} can be obtained through 
classical methods, such as the semigroup theory, due to the regularity given in \eqref{VCH:reg1}-\eqref{VCH:reg4}. More precisely, there exists $\di$ such that
\begin{equation}
\label{Ex:d:reg1}
    \di \in C([0,T];H^1(\Omega))\cap L^2(0,T;H^2(\Omega)) \cap
    H^1(0,T; L^2(\Omega)),
\end{equation}
which solves \eqref{eq:LC_1}-\eqref{eq:LC_2} almost everywhere in $\Omega \times (0,T)$.
Multiplying \eqref{eq:LC_1} by $\di$, integrating over $\Omega$ and using the definition of $\vect{h}$ in \eqref{eq:LC_2}, we obtain
$$
\begin{aligned}
&\frac{1}{2}\ddt \norm{\di}_{L^2(\Omega)}^2 + 
\underbrace{\int_\Omega \left(\vv  \cdot \nabla \right) \di \cdot \di\, \d x}_{=0} + \kappa \norm{\nabla \di}_{L^2(\Omega)}^2 + \alpha \norm{\di}_{L^4(\Omega)}^4 
 +\underbrace{\beta\int_\Omega \abs{\di \cdot \nabla \phi}^2 \, \d x}_{\geq 0}
 \\
&\quad = \alpha \int_\Omega (\phi-\phi_{\rm cr})\abs{\di}^2\, \d x 
\leq \alpha \norm{\phi-\phi_{\rm cr}}_{L^\infty(\Omega)} \norm{\di}_{L^2(\Omega)}^2 \leq 2 \alpha \norm{\di}_{L^2(\Omega)}^2 \leq \frac{\alpha}{2}\norm{\di}_{L^4(\Omega)}^4 + \alpha |\Omega|.
\end{aligned}
$$
Then, an integration in time gives
\begin{equation}
\label{eq:thrid_est_esistenza}
\norm{\di}_{L^\infty(0,T;L^2(\Omega))} + \sqrt{\kappa} \norm{\nabla \di}_{L^2(0,T;L^2(\Omega))} + \norm{\di}^2_{L^4(0,T;L^4(\Omega))} \leq 3 \norm{\di_0}_{L^2(\Omega)} + 3\sqrt{\alpha |\Omega| T}
    =:\mathcal{D}_2.
\end{equation}
Next, we prove that $\vect{d} \in L^\infty(\Omega \times (0,T))$. 
Let us recall that the vector $\vect{d}$ satisfies the following equation
\begin{equation}
\label{eq:d_soddisfa:i}
    \partial_t d_i + \left(\textcolor{black}{\vect{v}} \cdot \nabla\right) d_i - \kappa \Delta d_i 
    +\alpha \abs{\vect{d}}^2 d_i 
    - \alpha (\phi-\phi_{\rm cr}) d_i 
    + \beta \left(\vect{d}\cdot \nabla \phi\right) \partial_i \phi=0, \quad \forall \, i=1,\dots, n.
\end{equation}
In light of the regularity \eqref{Ex:d:reg1},
multiplying \eqref{eq:d_soddisfa:i} by $\abs{\vect{d}}^{p-2} d_i$ with $p>2$, summing the resulting equations and integrating over the domain $\Omega$, we find
\begin{equation}
\label{d:Linf:1}
    \begin{split}
        &\sum_{i=1}^n \int_\Omega \partial_t d_i |\vect{d}|^{p-2} d_i \, \d x +
        \sum_{i=1}^n \int_\Omega \left(\textcolor{black}{\vect{v}} \cdot \nabla \right) d_i \left| \vect{d} \right|^{p-2} d_i \, \d x + \kappa
        \sum_{i=1}^n \int_\Omega \nabla d_i \cdot \nabla \left( \left| \vect{d} \right|^{p-2} d_i \right) \, \d x\\
        &\quad 
        + \alpha \sum_{i=1}^n \int_\Omega | \vect{d}|^2 d_i |\vect{d}|^{p-2} d_i \, \d x
        -\alpha \sum_{i=1}^n \int_\Omega \left( \phi- \phi_{\rm cr} \right) d_i|\vect{d}|^{p-2} d_i \, \d x 
        + \beta \sum_{i=1}^n \int_\Omega \left( \vect{d} \cdot \nabla \phi\right) \partial_i \phi |\vect{d}|^{p-2} d_i \, \d x=0.
    \end{split}
\end{equation}
Observing that
\begin{align}
\label{d:Linf:2}
    \sum_{i=1}^n \int_\Omega \partial_t d_i |\vect{d}|^{p-2} d_i \, \d x 
    &= \frac{1}{p} \ddt \int_\Omega |\vect{d}|^p \, \d x,
    \\
    \label{d:Linf:3}
    \sum_{i=1}^n \int_\Omega \left( \textcolor{black}{\vect{v}} \cdot \nabla \right) d_i \left| \vect{d} \right|^{p-2} d_i \, \d x 
    &= \int_\Omega \textcolor{black}{\vect{v}} \cdot \nabla \left( \frac{1}{p} |\vect{d}|^p \right)\, \d x,
    \\
    \kappa \sum_{i=1}^n \int_\Omega \nabla d_i \cdot \nabla \left( \left| \vect{d} \right|^{p-2} d_i \right) \, \d x
    &= \kappa \sum_{i,k=1}^n \int_\Omega \partial_k d_i \partial_k d_i |\vect{d}|^{p-2} \, \d x
    + \kappa \sum_{i,k,l=1}^n \int_\Omega \partial_k d_i (p-2) |\vect{d}|^{2\left(\frac{p-2}{2}-1\right)} d_l \partial_k d_l d_i \, \d x \notag\\
    \label{d:Linf:4}
    &= \kappa \int_\Omega |\vect{d}|^{p-2} |\nabla \vect{d}|^2 \, \d x + \kappa
    \frac{4(p-2)}{p^2} \int_\Omega 
    \left| \nabla |\vect{d}|^{\frac{p}{2}}\right|^2
    \, \d x,
    \\
    \label{d:Linf:5}
    \alpha \sum_{i=1}^n \int_\Omega | \vect{d}|^2 d_i |\vect{d}|^{p-2} d_i \, \d x
    &=\alpha \int_\Omega |\vect{d}|^{p+2} \, \d x,
    \\
    \label{d:Linf:6}
    -\alpha \sum_{i=1}^n \int_\Omega \left( \phi- \phi_{\rm cr} \right) d_i|\vect{d}|^{p-2} d_i \, \d x 
    &= - \alpha \int_\Omega \left( \phi-\phi_{\rm cr} \right) |\vect{d}|^p \, \d x,
    \\
    \label{d:Linf:7}
    \beta \sum_{i=1}^n \int_\Omega \left( \vect{d} \cdot \nabla \phi\right) \partial_i \phi |\vect{d}|^{p-2} d_i \, \d x
    &= \beta \int_\Omega |\vect{d}|^{p-2} \left| \left( \vect{d}\cdot \nabla \phi \right) \right|^2 \, \d x,
\end{align}
we derive that
\begin{align}\nonumber
    \dt \left[\frac{1}{p}\int_\Omega \abs{\vect{d}}^p\, \d x\right] &+ \underbrace{\int_\Omega \textcolor{black}{\vect{v}}\cdot \nabla\left(\frac{1}{p} \abs{\vect{d}}^{p}\right)\, \d x}_{=0} + \underbrace{\kappa \int_\Omega |\vect{d}|^{p-2} |\nabla \vect{d}|^2 \, \d x +
    \kappa\frac{4(p-2)}{p^2} \int_\Omega 
    \left| \nabla |\vect{d}|^{\frac{p}{2}}\right|^2
    \, \d x}_{\geq 0} \\\nonumber
    &+\underbrace{\alpha \int_\Omega \abs{\vect{d}}^{p+2}\, \d x}_{\geq 0} -\alpha \int_\Omega (\phi-\phi_{\rm cr}) \abs{\vect{d}}^{p}\, \d x + \underbrace{\beta \int_\Omega (\vect{d}\cdot \nabla \phi)^2 \abs{\vect{d}}^{p-2}\, \d x}_{\geq 0} = 0.
\end{align}
Using the fact that $\norm{\phi}_{L^\infty(\Omega)}\leq 1$, we observe that $\alpha \int_\Omega (\phi-\phi_{\rm cr}) \abs{\vect{d}}^{p}\, \d x 
\leq \alpha (1-\phi_{\rm cr})\int_\Omega |\di|^p \, \d x$ (indeed, the subdomain where $\phi<\phi_{\rm cr}$ can be neglected). 
We recall the Young inequality 
\begin{equation}
\label{Young}
ab\leq \frac{\varepsilon a^p}{p}
+ \frac{b^q}{q \varepsilon^\frac{q}{p}}, \quad \text{for any } \, a, b, \varepsilon >0,\quad \frac{1}{p}+\frac{1}{q}=1.
\end{equation}
By using the above inequality, we then have
\begin{align}
\label{d:Linf:8}
   &\dt\left[\frac{1}{p}\int_\Omega \abs{\vect{d}}^p\, \d x\right] 
   + \alpha(1-\phi_{\rm cr}) \,\frac{p+2}{p} \int_\Omega \abs{\vect{d}}^p \, \d x \leq \alpha (1-\phi_{\rm cr}) \int_\Omega\abs{\vect{d}}^{p}\, \d x 
   + \alpha \frac{2}{p}(1-\phi_{\rm cr})^{\frac{p}{2}+1} \abs{\Omega},
  \end{align}
which immediately gives  
\begin{align}
\label{d:Linf:9}
 &\ddt \norm{\vect{d}}^p_{L^p(\Omega)} 
 + 2\alpha (1-\phi_{\rm cr})\norm{\vect{d}}^p_{L^p(\Omega)} 
 \leq 2\alpha(1-\phi_{\rm cr})^{\frac{p}{2}+1}\abs{\Omega},
\end{align}
where $\abs{\Omega}=(2\pi)^n$. By the Gronwall lemma, we find
\begin{equation}
\label{d:Linf:10}
    \norm{\vect{d}(t)}_{L^p(\Omega)}^p \leq \norm{\vect{d}(0)}^p_{L^p(\Omega)} {\rm e}^{-2 \alpha (1-\phi_{\rm cr}) t} + (1-\phi_{\rm cr})^{\frac{p}{2}} \abs{\Omega} \left(1 - {\rm e}^{-2 \alpha (1-\phi_{\rm cr}) t}\right), \quad \forall \, t \geq 0,
\end{equation}
which, in turn, entails that
$$
\norm{\vect{d}(t)}_{L^p(\Omega)} 
\leq \max\left\lbrace\norm{\vect{d}(0)}_{L^p(\Omega)},  (1-\phi_{\rm cr})^{\frac12} \abs{\Omega}^{\frac{1}{p}} \right\rbrace, \quad \forall \, t \geq 0.
$$
Taking the limit as $p \to \infty$, we obtain
\begin{equation}
    \label{Ex:d:Linf}
   \sup_{t \geq 0} \norm{\vect{d}(t)}_{L^\infty(\Omega)}\leq \max\left\lbrace\norm{\vect{d}_0}_{L^\infty(\Omega)}, \sqrt{1-\phi_{\rm cr}} \right\rbrace=: \mathcal{D}_\infty.
\end{equation}

We now derive a higher order Sobolev estimate of $\di$. Multiplying \eqref{eq:LC_1} by $-\Delta \di$, integrating over $\Omega$ and using \eqref{eq:LC_2}, we have
$$
\begin{aligned}
&\frac{1}{2} \dt \norm{\nabla \di}_{L^2(\Omega)}^2 + \kappa \norm{\Delta \di}_{L^2(\Omega)}^2 \underbrace{- \alpha \int_\Omega \abs{\di}^2 \di \cdot \Delta \di\, \d x}_{\geq 0} \\
&\quad = \int_\Omega \left(\vv\cdot \nabla\right) \di \cdot \Delta \di\, \d x+ \alpha \int_\Omega (\phi-\phi_{\rm cr}) \di \cdot (-\Delta \di)\, \d x + \beta \int_\Omega (\nabla \phi \cdot \di)\nabla \phi \cdot \Delta \di\, \d x.
\end{aligned}
$$
Then, by Gagliardo-Nirenberg interpolation inequalities and \eqref{eq:thrid_est_esistenza}, we observe that the right hand side can be estimated as follows
$$
\begin{aligned}
\left| \int_\Omega \left(\vv\cdot \nabla\right) \di \cdot \Delta \di\, \d x \right| 
&\leq \norm{\vv}_{L^6(\Omega)}\norm{\nabla \di}_{L^3(\Omega)} \norm{\Delta \di}_{L^2(\Omega)}
\leq C \norm{\vv}_{L^6(\Omega)}\norm{\nabla \di}_{L^2(\Omega)}^{\frac{1}{2}} \norm{\Delta \di}_{L^2(\Omega)}^{\frac{3}{2}}\\
&\leq \frac{\kappa}{12}\norm{\Delta \di}_{L^2(\Omega)}^2 + C \norm{\vv}_{L^6(\Omega)}^4 \norm{ \di}_{L^2(\Omega)} \norm{\Delta \di}_{L^2(\Omega)} \leq \frac{\kappa}{6} \norm{\Delta \di}_{L^2(\Omega)}^2 + C_m \mathcal{D}_2^2 \|\vv \|_{L^2(\Omega)}^8,\\
\left| \alpha \int_\Omega (\phi-\phi_{\rm cr}) \di \cdot (-\Delta \di)\, \d x \right| &\leq \alpha\norm{\phi-\phi_{\rm cr}}_{L^\infty(\Omega)}\norm{\di}_{L^2(\Omega)}\norm{\Delta \di}_{L^2(\Omega)} \leq 2 \alpha \norm{\di}_{L^2(\Omega)}\norm{\Delta \di}_{L^2(\Omega)} \\
&\leq \frac{\kappa}{6} \norm{\Delta \di}_{L^2(\Omega)}^2 + C \mathcal{D}_2^2,\\
\left| \beta \int_\Omega (\nabla \phi \cdot \di)\nabla \phi \cdot \Delta \di\, \d x \right|
&\leq \beta C\norm{\nabla \phi}_{L^4(\Omega)}^2 \norm{\di}_{L^\infty(\Omega)} \norm{\Delta \di}_{L^2(\Omega)}
\leq \beta C \norm{\phi}_{L^\infty(\Omega)} \norm{\phi}_{H^2(\Omega)} \mathcal{D}_\infty \norm{\Delta \di}_{L^2(\Omega)} \\
&\leq \beta C \mathcal{D}_\infty \norm{\phi}_{H^2(\Omega)} \norm{\Delta \di}_{L^2(\Omega)}\leq \frac{\kappa}{6} \norm{\Delta \di}_{L^2(\Omega)}^2 + \frac{\beta^2 C\mathcal{D}_\infty^2}{\kappa} \norm{\phi}_{H^2(\Omega)}^2,
\end{aligned}
$$
where $C$ is a generic constant depending only on $\Omega$ and the parameters of the system. Thus, we arrive at
$$
\dt \norm{\nabla \di}_{L^2(\Omega)}^2 + \kappa \norm{\Delta \di}_{L^2(\Omega)}^2 \leq 2 C_m \left( 1+ \|\vv \|_{L^2(\Omega)}^8 \right) \mathcal{D}_2^2 + \frac{2\beta^2  C \mathcal{D}_2^2}{\kappa} \norm{\phi}_{H^2(\Omega)}^2.
$$
Now, by using \eqref{H2:equiv}, \eqref{GN}, \eqref{A:WS_satisfies_th} and recalling that $\overline{\phi(t)}=\overline{\phi_0}$ for any $t\in [0,T]$, an integration in time gives
\begin{equation}
    \label{eq:stima_nablad_esistenza}
    \begin{aligned}
    & \max_{t\in [0,T]}
    \norm{\nabla \di (t)}_{L^2(\Omega)}^2 + \kappa \int_0^T \norm{\Delta \di(\tau)}_{L^2(\Omega)}^2 \, \d \tau 
    \\
    & \quad 
    \leq \norm{\nabla \di_0}_{L^2(\Omega)}^2 
    + 2C_m (1+ \| \vv\|_{L^\infty(0,T;L^2(\Omega))}^8) \mathcal{D}_2^2 T
    + \frac{4\beta C \mathcal{D}_2^2}{\kappa}\int_0^T \left(
    (2 \pi)^n \abs{\overline{\phi}(s)}^2 + 2 \norm{\Delta \phi(s)}_{L^2(\Omega)}^2\right) \, \d s
    \\
    & \quad 
    \leq \norm{\nabla \di_0}_{L^2(\Omega)}^2 
    +2C_m (1+ \| \vv\|_{L^\infty(0,T;L^2(\Omega))}^8) \mathcal{D}_2^2 T
    + \frac{4\beta C \mathcal{D}_2^2}{\kappa}(2 \pi)^n \abs{\overline{\phi_0}}^2 T 
    \\
    & \qquad     
    +\frac{8\beta C \mathcal{D}_2^2}{\kappa} \overline{C} \left( \int_\Omega \abs{\nabla \phi_0}^4 +  \abs{\nabla \phi_0}^2
    + \Psi(\phi_0)\, \d x+
  \mathcal{D}_\infty T^\frac12 \left(\int_0^T \| \dtilde(s) \|_{H^2(\Omega)}^2 \, \d s\right)^\frac12 + T\right) \mathrm{e}^{G(T)}
    \end{aligned}
\end{equation}
where 
$$
G(T):= \overline{C}T + \overline{C}\mathcal{D}_\infty T^\frac12 \left(\int_0^T \| \dtilde(s)\|_{H^2(\Omega)}^2 \, \d s\right)^\frac12 + T \|\vv\|_{L^\infty(0,T; L^2(\Omega))}^2.
$$
Let $\widetilde{C_i}$ be a generic constant depending on the parameters $\alpha, \beta, \gamma, \delta, \varepsilon, \kappa, \Theta_0, n, m, \overline{\phi_0}$, and on the norms $ 
\norm{\di_0}_{H^1(\Omega)}$, $\norm{\phi_0}_{W^{1,4}(\Omega)}$, $\norm{\Psi(\phi_0)}_{L^1(\Omega)}$, $\norm{\vv}_{L^\infty(0,T;L^2(\Omega))}$ and $
\mathcal{D}_\infty$, but is independent of $\dtilde$ and $T$. 
Combining \eqref{eq:thrid_est_esistenza} and \eqref{eq:stima_nablad_esistenza}, we find 
\begin{equation}
\label{dH2-fp}
\begin{split}
    \int_0^T \| \di(s)\|_{H^2(\Omega)}^2 \,\d s
    &\leq \widetilde{C_1} 
    \mathrm{exp}\left( \widetilde{C_2} T + \widetilde{C_2} T^\frac12 \| \dtilde\|_{L^2(0,T;H^2(\Omega))}\right)
    + \widetilde{C_3} T \mathrm{exp}\left( \widetilde{C_2} T + \widetilde{C_2} T^\frac12 \| \dtilde\|_{L^2(0,T;H^2(\Omega))}\right)
   \\
  & \quad + \widetilde{C_4} T^\frac12 \| \dtilde\|_{L^2(0,T;H^2(\Omega))} \mathrm{exp}
  \left( \widetilde{C_2} T + \widetilde{C_2} T^\frac12 \|\dtilde\|_{L^2(0,T;H^2(\Omega))} \right).
    \end{split}
\end{equation}
Let us set
$
\sigma= \widetilde{C_1} \mathrm{e}
$
and assume that
$
\| \dtilde\|_{L^2(0,T_\star;H^2(\Omega))} \leq \sqrt{2 \sigma},
$
where $T_\star$ is sufficiently small such that
$$
\widetilde{C_2} T_\star + \widetilde{C_2} T_\star^\frac12 \sqrt{2 \sigma} \leq 1
\quad \text{and} \quad 
\widetilde{C_3} T_\star \mathrm{e}+ \widetilde{C_4} T_\star^\frac12 \sqrt{2 \sigma} \mathrm{e} \leq \sigma.
$$
Thus, we deduce from \eqref{dH2-fp} that 
$$
\int_0^{T_\star} \| \di(s)\|_{H^2(\Omega)}^2 \leq 2\sigma.
$$
Furthermore, we also infer from \eqref{eq:stima_nablad_esistenza} that
$$
\| \di\|_{C([0,T_\star];H^1(\Omega))}\leq \widetilde{\sigma},
$$
where $\widetilde{\sigma}$ depends on $\sigma$, the parameters of the system, the norm of the initial data, $\norm{\vv}_{L^\infty(0,T_\star;L^2(\Omega))}$ and $\mathcal{D}_\infty$, but is independent of $\dtilde$.
Therefore, we define the map
\begin{equation}
    \label{eq:def_S}
    S: \mathcal{B}_{T_\star} \to \mathcal{B}_{T_\star}, \quad
    \dtilde \mapsto S(\dtilde):= \di,
\end{equation}
where
\begin{equation}
    \label{eq:def_BT}
    \begin{aligned}
    \mathcal{B}_{T_\star} = &\Big\{ \di \in C([0,T_\star];H^1(\Omega))\cap L^2(0,T_\star;H^2(\Omega))\cap L^\infty(0,T_\star;L^\infty(\Omega)):\\
    &\norm{\di}_{L^\infty(0,T_\star;L^\infty(\Omega))}\leq \mathcal{D}_{\infty}, 
    \, \norm{\di}_{C([0,T_\star];H^1(\Omega))}\leq \widetilde{\sigma}, 
    \, \norm{\di}_{L^2(0,T_\star;H^2(\Omega))}\leq \textcolor{black}{\sqrt{2 \sigma}}\Big\},
    \end{aligned}
\end{equation}
and $\di$ solves the $\hbox{EL}$ system \eqref{eq:LC_1}-\eqref{eq:LC_2} in $(0,T_\star)$, with $\phi$ solution to the system \eqref{eq:VCH_1}-\eqref{eq:VCH_2} restricted on $(0,T_\star)$. We notice that $\mathcal{B}_{T_\star}$ is non-empty, convex and closed. 
In addition, it follows from \eqref{eq:LC_1} that
$$
\begin{aligned}
    \norm{\partial_t \di}_{L^2(\Omega)}&\leq \norm{(\vv \cdot \nabla) \di}_{L^2(\Omega)} + \kappa \norm{\Delta \di}_{L^2(\Omega)} + \alpha \norm{\abs{\di}^2 \di}_{L^2(\Omega)} + \alpha \norm{(\phi-\phi_{\rm cr})\di}_{L^2(\Omega)} + \beta \norm{(\nabla \phi \cdot \di) \nabla \phi}_{L^2(\Omega)}\\
    &\leq \norm{\vv}_{L^6(\Omega)}\norm{\nabla \di}_{L^3(\Omega)} + \kappa \norm{\Delta \di}_{L^2(\Omega)} + \alpha \norm{\di}_{L^\infty(\Omega)}^3 \sqrt{\abs{\Omega}} + 2 \alpha \norm{\di}_{L^\infty(\Omega)}\sqrt{\abs{\Omega}} + \beta \norm{\di}_{L^\infty(\Omega)}\norm{\nabla \phi}_{L^4(\Omega)}^2\\
    &\leq C_m \norm{\di}_{H^2(\Omega)} + \alpha \mathcal{D}_\infty^3 \sqrt{\abs{\Omega}} + 2 \alpha \mathcal{D}_\infty \sqrt{\abs{\Omega}} + \beta \mathcal{D}_\infty \norm{\phi}_{H^2(\Omega)}.
\end{aligned}
$$
Exploiting \eqref{A:WS_satisfies_th} and \eqref{eq:def_BT}, 
an integration in time entails 
\begin{equation}
    \label{eq:stima_esistenza_dertempo}
    \int_0^{T_\star} \norm{\partial_t \di}_{L^2(\Omega)}^2 \, \d \tau\leq \overline{\sigma}^2,
\end{equation}
where $\overline{\sigma}:= C(\sigma,\widetilde{\sigma},T_\star)$.
Hence, 
$$
S: \mathcal{B}_{T_\star} \to \mathcal{B}_{T_\star} \cap \left\{\di \in W^{1,2}(0,T_\star;L^2(\Omega)):
\norm{\partial_t\di}_{L^2(0,T_\star;L^2(\Omega))}\leq \overline{\sigma}\right\},
$$
thereby $S$ is a compact application in $L^2(0,T;L^2(\Omega))$. We are left to prove that $S$ is a continuous application. We consider $\{\dtilde_k\}_{k \in \N}\subset \mathcal{B}_T$ such that 
$\dtilde_k \to \dtilde$ in $L^2(0,T;L^2(\Omega))$ and we claim that $\di_k = S(\dtilde_k) \to \di = S(\dtilde)$ in $L^2(0,T;L^2(\Omega))$. To this end, let $(\Phi_k, M_k)$ and $(\Phi,M)$ be the weak solutions to the following problems
$$
\left\{\begin{aligned}
&\partial_t \Phi_k + \vv \cdot \nabla \Phi_k = \Delta M_k,\\
& M_k =\delta \partial_t \Phi_k -\diver\left((\varepsilon + \gamma\abs{\nabla \Phi_k}^2)\nabla \Phi_k\right)+\frac{1}{\varepsilon}\Psi'(\Phi_k)-\frac{\alpha}{2}\abs{\dtilde_k}^2 -\beta\diver\left((\nabla \Phi_k \cdot \dtilde_k)\dtilde_k\right),
\end{aligned}\right.
$$
and
$$
\left\{\begin{aligned}
&\partial_t \Phi + \vv \cdot \nabla \Phi = \Delta M,\\
& M = \delta \partial_t \Phi -\diver\left((\varepsilon + \gamma\abs{\nabla \Phi}^2)\nabla \Phi\right)+\frac{1}{\varepsilon}\Psi'(\Phi)-\frac{\alpha}{2}\abs{\dtilde}^2 -\beta\diver\left((\nabla \Phi \cdot \dtilde)\dtilde\right),
\end{aligned}\right.
$$
in the sense of \eqref{VCH:reg1}-\eqref{VCH:reg3} and \eqref{VCH:wp1}-\eqref{VCH:wp2}.
Defining $\psi:= \Phi-\Phi_k$ and $\chi:= M-M_k$, the pair $(\psi, \chi)$ solves the system
\begin{equation}
\label{eq:VCH_diff}
\left\{\begin{aligned}
&(\partial_t \psi, \xi) + 
(\vv \cdot \nabla \psi, \xi) + (\nabla \chi, \nabla \xi) =0,
\quad \forall \, \xi \in H^1(\Omega), \text{a.e. in } (0,T),\\
&\chi = \delta \partial_t \psi -\varepsilon\Delta \psi -\gamma\diver\left( \abs{\nabla \Phi}^2 \nabla \Phi\right) + \gamma\diver\left( \abs{\nabla \Phi_k}^2 \nabla \Phi_k\right) +\frac{1}{\varepsilon}\Psi'(\Phi)-\frac{1}{\varepsilon}\Psi'(\Phi_k)\\
&\quad \quad-\frac{\alpha}{2}\left(\abs{\dtilde}^2 -\abs{\dtilde_k}^2\right)-\beta\diver\left((\nabla \Phi\cdot \dtilde)\dtilde\right)+\beta\diver\left((\nabla \Phi_k\cdot \dtilde_k)\dtilde_k\right) \quad \text{a.e. in }\Omega \times (0,T).
\end{aligned}\right.
\end{equation}
Let us choose $\xi=(-\Delta)^{-1}\psi$ in \eqref{eq:VCH_diff}. By the conservation of mass, it follows that $\overline{\psi}=0$, thereby $(-\Delta)^{-1}\psi$ is well-defined. We find
$$
\begin{aligned}
&\dt \left(\frac{1}{2}\norm{\psi}^2_{\ast} +\frac{\delta}{2}\norm{\psi}_{L^2(\Omega)}^2\right) + \varepsilon\norm{\nabla \psi}_{L^2(\Omega)}^2 + \underbrace{\gamma \int_\Omega \prt{\abs{\nabla \Phi}^2 \nabla \Phi - \abs{\nabla \Phi_k}^2 \nabla \Phi_k}\cdot \nabla \psi\, \d x}_{\gamma \geq 0, \,\geq 0} +\underbrace{\int_\Omega \frac{1}{\varepsilon} \prt{F'(\Phi) -F'(\Phi_k)}\cdot \psi\, \d x}_{\geq 0} \\
&\quad 
= \frac{\Theta_0}{\varepsilon} \norm{\psi}_{L^2(\Omega)}^2 
- \left(\vv \cdot \nabla \psi,(-\Delta)^{-1} \psi\right)
+\frac{\alpha}{2} \int_\Omega \prt{\abs{\dtilde}^2 -\abs{\dtilde_k}^2}\cdot\psi\, \d x
-\beta\int_\Omega \left[\prt{\nabla \Phi\cdot \dtilde}\dtilde -\prt{\nabla \Phi_k \cdot \dtilde_k}\dtilde_k\right]\cdot \nabla \psi\, \d x,
\end{aligned}
$$
where
$\| f\|_{\ast}=\| \nabla (-\Delta)^{-1}f\|_{L^2(\Omega)}.$
Since $\overline{\psi}=0$,  by \eqref{Rev-SI} and $\dtilde,\dtilde_k \in \mathcal{B}_T$, we have
$$
\begin{aligned}
\left| -\left(\vv \cdot \nabla \psi, (-\Delta)^{-1} \psi\right) \right|
&= \left|\int_\Omega \psi \vv \cdot \nabla\prt{(-\Delta)^{-1} \psi}\, \d x \right|
\leq \norm{\psi}_{L^6(\Omega)} \norm{\vv}_{L^3(\Omega)} \norm{ \psi}_{\ast}
\leq \frac{\varepsilon}{8} \norm{\nabla \psi}_{L^2(\Omega)}^2 + C_m \norm{\psi}_{\ast}^2,
\end{aligned}
$$
and
$$
\begin{aligned}
\left|\frac{\Theta_0}{\varepsilon} \norm{\psi}_{L^2(\Omega)}^2\right| &\leq \frac{\varepsilon}{8}\norm{\nabla \psi}_{L^2(\Omega)}^2 + C \norm{\psi}_{\ast}^2.
\end{aligned}
$$
Similarly, we find
$$
\begin{aligned}
\left|\frac{\alpha}{2} \int_\Omega \prt{\abs{\dtilde}^2 -\abs{\dtilde_k}^2}\cdot\psi\, \d x\right|
&\leq C\prt{ {\color{black} \norm{\dtilde}_{L^\infty(\Omega)} +\norm{\dtilde_k}_{L^\infty(\Omega)}} } \norm{\dtilde-\dtilde_k}_{L^2(\Omega)}\norm{\psi}_{L^2(\Omega)}\\
&\leq \frac{\varepsilon}{8} \norm{\nabla \psi}_{L^2(\Omega)}^2 + C \norm{\dtilde-\dtilde_k}_{L^2(\Omega)}^2,
\end{aligned}
$$
and
$$
\begin{aligned}
&-\beta\int_\Omega \left[\prt{\nabla \Phi\cdot \dtilde}\dtilde -\prt{\nabla \Phi_k \cdot \dtilde_k}\dtilde_k\right]\cdot \nabla \psi\, \d x
\\
&\quad = \underbrace{-\beta \int_\Omega \prt{\nabla \psi \cdot \dtilde}^2\, \d x}_{\leq 0} -\beta \int_\Omega \left[\prt{\nabla \Phi_k \cdot \dtilde}\dtilde- \prt{\nabla \Phi_k \cdot \dtilde_k}\dtilde_k\right]\cdot \nabla \psi\, \d x\\
&\quad \leq  -\beta \int_\Omega \left[\prt{\nabla \Phi_k \cdot (\dtilde-\dtilde_k)}\dtilde_k\right]\cdot \nabla \psi\, \d x -\beta\int_\Omega \left[\prt{\nabla \Phi_k \cdot \dtilde_k} {\color{black} \prt{\dtilde-\dtilde_k}} \right]\cdot \nabla \psi\, \d x\\
&\quad \leq 2 \beta  \norm{\nabla \Phi_k}_{L^4(\Omega)} \norm{\dtilde-\dtilde_k}_{L^4(\Omega)} \norm{\dtilde}_{L^\infty(\Omega)} \norm{\nabla \psi}_{L^2(\Omega)}\\
&\quad \leq \frac{\varepsilon}{4}\norm{\nabla \psi}_{L^2(\Omega)}^2  + C \norm{\nabla \Phi_k}_{L^4(\Omega)}^2 \norm{\dtilde-\dtilde_k}_{L^\infty(\Omega)}\norm{\dtilde-\dtilde_k}_{L^2(\Omega)}\\
&\quad \leq \frac{\varepsilon}{4}\norm{\nabla \psi}_{L^2(\Omega)}^2  + C \mathcal{D}_\infty\norm{\nabla \Phi_k}_{L^4(\Omega)}^2 \norm{\dtilde-\dtilde_k}_{L^2(\Omega)}.
\end{aligned}
$$
Observe that all the above constants are independent of $k$. 
By \eqref{A:WS_satisfies_th}, it follows from $\dtilde_k \in \mathcal{B}_T$ that $\norm{\nabla \Phi_k}_{L^\infty(0,T; L^4(\Omega))}\leq C$, where $C$ is independent of $k$. Hence, we arrive at
$$
\begin{aligned}
\dt \left( \frac{1}{2}\norm{\psi}^2_{\ast} +\frac{\delta}{2}\norm{\psi}_{L^2(\Omega)}^2\right) &+ \frac{\varepsilon}{2}\norm{\nabla \psi}_{L^2(\Omega)}^2 \leq C\norm{\psi}_{\ast}^2 + C \norm{\dtilde-\dtilde_k}_{L^2(\Omega)}^2 + C \norm{\dtilde-\dtilde_k}_{L^2(\Omega)}.
\end{aligned}
$$
The Gronwall lemma implies that 
\begin{equation}
    \label{eq:phitozero}
    \begin{split}
\norm{\psi}_{L^\infty(0,T;L^2(\Omega))} &+ 
    \norm{\psi}_{L^2(0,T;H^1(\Omega))} \\
    &\leq C \mathrm{e}^{CT}\left[\int_0^T \norm{\dtilde(s)-\dtilde_k(s)}_{L^2(\Omega)}^2 \, \d s + \int_0^T \norm{\dtilde(s)-\dtilde_k(s)}_{L^2(\Omega)}\, \d s\right]^\frac12 \to 0.
    \end{split}
\end{equation}
Next, we consider $\di_k = S(\dtilde_k)$ and $\di = S(\dtilde)$, which solve
$$
\left\{\begin{aligned}
&\partial_t \di_k + \prt{\vv \cdot \nabla}\di_k = -\vect{h}_k,\\
&\vect{h}_k = -\kappa \Delta \di_k +\alpha \abs{\di_k}^2 \di_k - \alpha(\Phi_k -\phi_{\rm cr}) \di_k +\beta \prt{\di_k \cdot \nabla \Phi_k}\nabla \Phi_k
\end{aligned}\right.
$$
and
$$
\left\{\begin{aligned}
&\partial_t \di + \prt{\vv \cdot \nabla}\di = -\vect{h},\\
&\vect{h} = -\kappa \Delta \di +\alpha \abs{\di_k}^2 \di - \alpha(\Phi -\phi_{\rm cr}) \di +\beta \prt{\di \cdot \nabla \Phi}\nabla \Phi
\end{aligned}\right.
$$
almost everywhere in $\Omega \times (0,T)$.
We define $\p := \di-\di_k$ and $\vect{\Xi}:= \vect{h} - \vect{h}_k$ such that
\begin{equation}
    \label{eq:LC_diff}
    \left\{
    \begin{aligned}
    &\partial_t \p + (\vv \cdot \nabla)\p = -\vect{\Xi},\\
    &\vect{\Xi}=-\kappa \Delta \p +\alpha \prt{\abs{\di}^2 \di -\abs{\di_k}^2 \di_k} - \alpha \prt{\Phi - \phi_{\rm cr}} \di +\alpha\prt{\Phi_k -\phi_{\rm cr}}\di_k \\
    &\quad +\beta \prt{\di \cdot \nabla \Phi}\nabla \Phi - \beta \prt{\di_k \cdot \nabla \Phi_k}\nabla \Phi_k.
    \end{aligned}\right.
\end{equation}
Multiplying \eqref{eq:LC_diff} by $\p$ and integrating over $\Omega$, we find
$$
\begin{aligned}
&\frac{1}{2}\dt \norm{\p}_{L^2(\Omega)}^2 
+ \kappa \norm{\nabla \p}_{L^2(\Omega)}^2 + \alpha \underbrace{\int_\Omega \prt{\abs{\di}^2 \di -\abs{\di_k}^2 \di_k}\cdot \p\, \d x}_{\geq 0} +\underbrace{\beta\int_\Omega \prt{\p\cdot \nabla \Phi}^2\, \d x }_{\geq 0}
\\
&\quad = \alpha \int_\Omega \psi \di \cdot \p\, \d x + \alpha \int_\Omega \prt{\Phi_k -\phi_{\rm cr}}\p^2\, \d x -\beta \int_\Omega \left[\prt{\di_k \cdot \nabla \Phi}\nabla \Phi -\prt{\di_k \cdot \nabla \Phi_k}\nabla \Phi_k\right] \cdot \p\, \d x.
\end{aligned}
$$
Since $\di, \di_k \in \mathcal{B}_T$ and $\norm{\nabla \Phi_k}_{L^\infty(0,T; L^4(\Omega))}\leq C$, $\norm{\nabla \Phi}_{L^\infty(0,T; L^4(\Omega))}\leq C$, where the positive constant $C$ is independent of $k$ as observed before,
the terms on the right-hand side can be estimated as follows
$$
\begin{aligned}
&\left|\alpha \int_\Omega \psi \di \cdot \p\, \d x\right| \leq \alpha \norm{\psi}_{L^2(\Omega)}\norm{\di}_{L^\infty(\Omega)}\norm{\p}_{L^2(\Omega)}\leq C \norm{\p}_{L^2(\Omega)}^2 + \norm{\psi}_{L^2(\Omega)}^2,\\
&\left|\alpha \int_\Omega \prt{\Phi_k -\phi_{\rm cr}}\p^2\, \d x\right| \leq\alpha \norm{\Phi_k -\phi_{\rm cr}}_{L^\infty(\Omega)}\norm{\p}_{L^2(\Omega)}^2\leq 2 C \norm{\p}_{L^2(\Omega)}^2,
\end{aligned}
$$
and 
$$\begin{aligned}
\left|\beta \int_\Omega \left[\prt{\di_k \cdot \nabla \Phi}\nabla \Phi -\prt{\di_k \cdot \nabla \Phi_k}\nabla \Phi_k\right] \cdot \p\, \d x\right| 
&\leq \beta \norm{\di_k}_{L^\infty(\Omega)}\norm{\nabla \psi}_{L^2(\Omega)}\prt{\norm{\nabla \Phi}_{L^4(\Omega)}+ \norm{\nabla \Phi_k}_{L^4(\Omega)}}\norm{\p}_{L^4(\Omega)}
\\
&\leq  \frac{\kappa}{2}\norm{\nabla \p}_{L^2(\Omega)}^2
+C \norm{\nabla \psi}_{L^2(\Omega)}^2 + C \norm{\p}_{L^2(\Omega)}^2.
\end{aligned}
$$
Hence, we obtain
$$
\frac{1}{2}\dt \norm{\p}_{L^2(\Omega)}^2 +\frac{\kappa}{2}\norm{\nabla \p}_{L^2(\Omega)}^2 \leq C \norm{\p}_{L^2(\Omega)}^2 + C \norm{\psi}_{H^1(\Omega)}^2.
$$
In light of \eqref{eq:phitozero}, we conclude from the Gronwall lemma that
\begin{equation}
\label{eq:d_to0}
\norm{\p}_{L^\infty(0,T;L^2(\Omega))} + \norm{\nabla \p}_{L^2(0,T;L^2(\Omega))} \leq C {\rm e}^{CT} \left[\int_0^T \norm{\psi(s)}_{H^1(\Omega)}^2 \, \d s\right]^\frac12 \to 0,
\end{equation}
namely $\di_k \to \di$ in $L^\infty(0,T;L^2(\Omega))\cap L^2(0,T;L^2(\Omega))$, which is the desired conclusion. 

We are now in the position to conclude the first part of the proof. Owing to the above analysis, the Schauder fixed point theorem entails that $S$ has a fixed point. To summarize, for any $\vv \in C([0,T], \vect{V}_m)$, there exist $0< T_\star \leq T$ and a quadruplet $(\phi, \di, \mu, \vect{h})$ on $[0,T_\star]$ 
such that 
\begin{align}
    \label{VCH-LC:reg1}
    & \phi \in L^\infty(0,T_\star; W^{1,4}(\Omega))\cap L^2(0,T_\star;H^2(\Omega)),\\
    \label{VCH-LC:reg2}
    &\phi \in L^\infty(\Omega \times (0,T_\star)) \, \text{with }
    |\phi(x,t)|<1 \, \text{a.e. in } \Omega \times (0,T_\star),\\
    \label{VCH-LC:reg3}
    &\partial_t \phi \in L^2(0,T_\star; L^2(\Omega)),
    \quad |\nabla \phi|^2 \nabla \phi \in L^2(0,T_\star;H^1(\Omega)), \quad 
    F'(\phi)\in L^2(0,T_\star;L^2(\Omega)),
    \\
    \label{VCH-LC:reg4}
    & \di \in C([0,T_\star];H^1(\Omega))\cap L^2(0,T_\star;H^2(\Omega)) \cap
    H^1(0,T_\star; L^2(\Omega)),
    \\
     \label{VCH-LC:reg5}
    &  \di \in L^\infty(\Omega \times (0,T_\star)) \, \text{with }
    |\di(x,t)|\leq \mathcal{D}_\infty \, \text{a.e. in } \Omega \times (0,T_\star),
    \\
    \label{VCH-LC:reg6}
    &\mu \in L^2(0,T_\star;H^1(\Omega)),
    \quad 
    \vect{h} \in L^2(0,T_\star; L^2(\Omega)),
    \end{align}
satisfying
\begin{subequations}
\begin{alignat}{2}
\label{VCH-LC:wp1}
    &( \partial_t \phi, \xi )
    - (\phi \vv, \nabla \xi)
    + (\nabla \mu, \nabla \xi)
    =0, \quad &&\forall \, \xi \in H^1(\Omega), \text{ a.e. in }(0,T_\star),
    \\
    \label{VCH-LC:wp2}
    &\mu=
    \delta \partial_t \phi
    - \diver \left( (\varepsilon+ \gamma |\nabla \phi|^2) \nabla \phi \right) 
    + \frac{1}{\varepsilon} \Psi'(\phi)
    -\frac{\alpha}{2}  |\di|^2 
    - \beta \diver \left( (\nabla \phi \cdot \di) \di \right), \quad &&\text{a.e. in } \Omega \times (0,T_\star),
    \\
    \label{VCH-LC:wp3}
    &\partial_t \di + \prt{\vv \cdot \nabla}\di = -\vect{h}, \quad 
    &&\text{a.e. in } \Omega \times (0,T_\star),\\
    \label{VCH-LC:wp4}
    &\vect{h} = -\kappa \Delta \di +\alpha \abs{\di}^2 \di -\alpha(\phi -\phi_{\rm cr}) \di +\beta \prt{\di \cdot \nabla \phi}\nabla \phi, \quad &&
    \text{a.e. in } \Omega \times (0,T_\star),
\end{alignat}
\end{subequations}
and  
\begin{equation}
\label{VCH-LC:wp5}
\phi(\cdot,0)=\phi_0, \quad \di(\cdot, 0)=\di_0 \quad \text{in } \Omega.
\end{equation}

\noindent
\textbf{Step 2: Global solutions and vanishing viscosity limit in the Cahn-Hilliard/Ericksen-Leslie system.}
The aim of this part is twofold: showing that $(\phi,\mu,\di,\hh)$ fulfilling \eqref{VCH-LC:reg1}-\eqref{VCH-LC:reg6}, \eqref{VCH-LC:wp1}-\eqref{VCH-LC:wp5} can be extended on $[0,T]$, where $T$ is arbitrary, and letting the parameter $\delta\rightarrow 0$ in  
\eqref{VCH-LC:wp1}-\eqref{VCH-LC:wp4}.
To this end, for any $\delta \in (0,1)$ and $\vv \in C([0,T];\vect{V}_m)$, let $(\phi^\delta,  \di^\delta, \mu^\delta, \hh^\delta)$ be a solution to \eqref{VCH-LC:wp1}-\eqref{VCH-LC:wp4} fulfilling \eqref{VCH-LC:reg1}-\eqref{VCH-LC:reg6} with the initial datum \eqref{VCH-LC:wp5}. It is clear that any $(\phi^\delta, \di^\delta, \mu^\delta, \hh^\delta)$ is defined on $[0,T_\star^\delta]$. We will derive uniform estimates on the sequence of solutions $(\phi^\delta, \di^\delta, \mu^\delta, \hh^\delta)$ which does not blow-up as $t \to T_\star^\delta$ and are independent of the parameter $\delta$. 
First of all, since any $(\phi^\delta, \di^\delta, \mu^\delta, \hh^\delta)$ is obtained from the fixed point argument in Step 1, we deduce from \eqref{eq:thrid_est_esistenza} that 
\begin{equation}
    \label{est:vv:0}
    \begin{split}
\norm{\di^\delta}_{L^\infty(0,T_\star^\delta;L^2(\Omega))} + \sqrt{\kappa} \norm{\nabla \di^\delta}_{L^2(0,T_\star^\delta;L^2(\Omega))} 
\leq 3 \norm{\di_0}_{L^2(\Omega)} + 3\sqrt{\alpha |\Omega| T_\star^\delta} =:\mathcal{D}_2^\star.
\end{split}
\end{equation}
Notice that $\mathcal{D}_2^\star$ is independent of $\delta$ and grows sublinearly in $T_\star^\delta$. We also remind that $|\di^\delta(x,t)|\leq \mathcal{D}_\infty$ almost everywhere in $\Omega \times (0,T_\star^\delta)$, where $\mathcal{D}_\infty$ is independent of $\delta$ and $T_\star^\delta$ (cf. \eqref{Ex:d:Linf}).

Next, we aim to carry out the main energy estimate. Setting 
$
F(s)= \frac{\Theta}{2}\left( (1+s)\ln(1+s)+(1-s)\ln(1-s) \right)$, for $s \in[-1,1]$,
and 
$$
L^2_{(c)}(\Omega)= \left\lbrace f \in L^2(\Omega): \frac{1}{|\Omega|} \int_\Omega f(x) \, \d x=c \right\rbrace,
\quad \text{for } c \in \R.
$$
For any $c \in (-1,1)$, we introduce the functional $E_{\rm{conv}}(\psi): L^2(\Omega)\to \R$ given by
\begin{equation}
\label{E-free0}
E_{\rm{conv}}(\psi)
= 
\begin{cases}
\displaystyle \int_\Omega \frac{\gamma}{4} \abs{\nabla \psi}^4 +\frac{\varepsilon}{2}\abs{\nabla \psi}^2 + \frac{1}{\varepsilon} F(\psi) \, \d x, \quad &\text{if } \, \psi \in W^{1,4}(\Omega): |\psi(x)| \leq 1 \ \text{in} \ \Omega,
\\[5pt]
+ \infty, \quad &\text{otherwise}.
\end{cases}
\end{equation}
The functional $E_{\rm{conv}}(\psi)$ is proper, lower semi-continuous and convex. Its subgradient $\partial E_{\rm{conv}}$ is a maximal monotone operator on $L_{(0)}^2(\Omega)$. 
Furthermore, we have the characterization 
\begin{equation}
\label{D-pE-free0}
D(\partial E_{\rm{conv}})
=
\left\lbrace 
\psi \in H^2(\Omega)\cap L^2_{(c)}(\Omega): |\nabla \psi|^2 \nabla \psi \in H^1(\Omega), F'(\psi)\in L^2(\Omega), 
F''(\psi)|\nabla \psi|^2 \in L^1(\Omega)
\right\rbrace
\end{equation}
and 
\begin{equation}
\label{pE-free0}
\partial E_{\rm{conv}}(\psi)
= - \diver \left( (\varepsilon+ \gamma |\nabla \psi|^2) \nabla \psi \right) 
+ \frac{1}{\varepsilon} P_0 F'(\psi),
\end{equation}
where $P_0 f:= f-\overline{f}$ denotes the orthogonal projection onto $L_{(0)}^2(\Omega)$. This can be proven by following \cite[Example 4, Chapter 2.2, page 59]{barbu2010nonlinear} and \cite[Section 4]{abels2007convergence} together with \cite[Theorem 2.6]{CM2019}. Then, we compute
\begin{align*}
\left(\mu^\delta, \partial_t \phi^\delta\right)
&= \left(\delta \partial_t \phi^\delta
    - \diver \left( (\varepsilon+ \gamma |\nabla \phi^\delta|^2) \nabla \phi^\delta \right) 
    + \frac{1}{\varepsilon} \Psi'(\phi^\delta)
    -\frac{\alpha}{2}  |\di^\delta|^2 
    - \beta \diver \left( (\nabla \phi^\delta \cdot \di^\delta) \di^\delta \right), \partial_t \phi^\delta\right)
\\
&=\delta \| \partial_t \phi^\delta\|_{L^2(\Omega)}^2 +
\left(- \diver \left( (\varepsilon+ \gamma |\nabla \phi^\delta|^2) \nabla \phi^\delta \right) + \frac{1}{\varepsilon} F'(\phi^\delta), \partial_t \phi^\delta\right)
-\frac{\Theta_0}{\varepsilon} (\phi^\delta, \partial_t \phi^\delta)
\\
&\quad - \frac{\alpha}{2}  (|\di^\delta|^2 , \partial_t \phi^\delta)
    - \beta (\diver \left( (\nabla \phi^\delta \cdot \di^\delta) \di^\delta \right), \partial_t \phi^\delta).
\end{align*}
Recalling that $\phi^\delta \in W^{1,2}(0,T_\star^\delta;L^2(\Omega))$ with $\overline{\phi^\delta}(t)=\overline{\phi_0}\in (-1,1)$ for all $t\in [0,T_\star^\delta)$ and $- \diver \left( (\varepsilon+ \gamma |\nabla \phi^\delta|^2) \nabla \phi^\delta \right) + \frac{1}{\varepsilon} F'(\phi^\delta) \in L^2(0,T_\star^\delta;L^2(\Omega))$, and owing to the characterization \eqref{D-pE-free0}-\eqref{pE-free0} of the subgradient of $E_{\rm{conv}}$, an application of \cite[Chapter IV, Lemma 4.3]{S2013} yields
\begin{equation}
\label{mu-ptphi:1}
\begin{split}
\left(\mu^\delta, \partial_t \phi^\delta\right)
&= \delta \| \partial_t \phi^\delta\|_{L^2(\Omega)}^2 
+ \dt
\left[\int_\Omega \frac{\gamma}{4} \abs{\nabla \phi^\delta}^4 +\frac{\varepsilon}{2}\abs{\nabla \phi^\delta}^2 + \frac{1}{\varepsilon} F(\phi^\delta) \, \d x \right]
-\frac{\Theta_0}{\varepsilon} (\phi^\delta, \partial_t \phi^\delta)
\\
&\quad - \frac{\alpha}{2}  (|\di^\delta|^2 , \partial_t \phi^\delta)
    - \beta (\diver \left( (\nabla \phi^\delta \cdot \di^\delta) \di^\delta \right), \partial_t \phi^\delta).
    \end{split}
\end{equation}
Furthermore, by the classical chain rule in $W^{1,2}(0,T_\star^\delta;L^2(\Omega))$, we obtain
\begin{equation}
\label{mu-ptphi:2}
\begin{split}
\left(\mu^\delta, \partial_t \phi^\delta\right)
&= \delta \| \partial_t \phi^\delta\|_{L^2(\Omega)}^2 
+ \dt
\left[\int_\Omega \frac{\gamma}{4} \abs{\nabla \phi^\delta}^4 +\frac{\varepsilon}{2}\abs{\nabla \phi^\delta}^2 + \frac{1}{\varepsilon} \Psi(\phi^\delta) \, \d x \right]
\\
&\quad - \frac{\alpha}{2}  (|\di^\delta|^2 , \partial_t \phi^\delta)
    - \beta (\diver \left( (\nabla \phi^\delta \cdot \di^\delta) \di^\delta \right), \partial_t \phi^\delta).
    \end{split}
\end{equation}
Now, multiplying \eqref{VCH-LC:wp1} by $\mu^\delta$ and \eqref{VCH-LC:wp3} by $\vect{h}^\delta$, using respectively \eqref{eq:VCH_2}, \eqref{eq:LC_2} and \eqref{mu-ptphi:2}, integrating over the domain $\Omega$ and adding the two resulting equations, we obtain
$$
\begin{aligned}
\dt&\left[\int_\Omega \prt{\frac{\gamma}{4} \abs{\nabla \phi^\delta}^4 +\frac{\varepsilon}{2}\abs{\nabla \phi^\delta}^2 + \frac{1}{\varepsilon} \Psi(\phi^\delta) + \frac{\kappa}{2}\abs{\nabla \di^\delta}^2 + \frac{\alpha}{4}\abs{\di^\delta}^4}\, \d x\right]+ \norm{\nabla \mu^\delta}_{L^2(\Omega)}^2 + \delta \norm{\partial_t \phi^\delta}_{L^2(\Omega)}^2+ \norm{\vect{h}^\delta}_{L^2(\Omega)}^2 \\
&-\frac{\alpha}{2}\int_\Omega \abs{\di^\delta}^2 \partial_t \phi^\delta\, \d x -\alpha \int_\Omega \prt{\phi^\delta-\phi_{\rm cr}} \di^\delta \cdot \partial_t \di^\delta\, \d x -\beta \int_\Omega \diver\prt{(\nabla \phi^\delta\cdot \di^\delta)\di^\delta}\partial_t \phi^\delta\, \d x \\
&+\beta \int_\Omega \prt{\di^\delta \cdot \nabla \phi^\delta}\nabla \phi^\delta \cdot \partial_t \di^\delta\, \d x=
\int_\Omega\vv \cdot \nabla \mu^\delta \phi^\delta\, \d x
- \int_\Omega \prt{\vv \cdot \nabla}\di^\delta \cdot \vect{h}^\delta\, \d x.
\end{aligned}
$$
Here, we have also used the chain rule in $L^2(0,T_\star^\delta;H^2(\Omega))\cap W^{1,2}(0,T_\star^\delta;L^2(\Omega))$ for the $\di^\delta$ terms in first integral of the above equation. On the other hand, we can rewrite
$$
\begin{aligned}
-\frac{\alpha}{2}\int_\Omega \abs{\di^\delta}^2 \partial_t \phi^\delta\, \d x -\alpha \int_\Omega \prt{\phi^\delta-\phi_{\rm cr}} \di^\delta \cdot \partial_t \di^\delta\, \d x  &= \dt \left[-\int_\Omega \frac{\alpha}{2} \abs{\di^\delta}^2 (\phi^\delta-\phi_{\rm cr})\, \d x\right],
\end{aligned}
$$
and 
$$
\begin{aligned}
-\beta \int_\Omega \diver\prt{(\nabla \phi^\delta\cdot \di^\delta)\di^\delta}\partial_t \phi^\delta\, \d x +\beta \int_\Omega \prt{\di^\delta \cdot \nabla \phi^\delta}\nabla \phi^\delta \cdot \partial_t \di^\delta\, \d x&
= \dt \left[\int_\Omega \frac{\beta}{2} \abs{\nabla \phi^\delta \cdot \di^\delta}^2\, \d x\right].
\end{aligned}
$$
Thus, we infer that
\begin{equation}
\label{EEE1}
\begin{aligned}
\dt \mathrm{E}_{\rm free}^\gamma(\phi^\delta, \di^\delta) + \norm{\nabla \mu^\delta}_{L^2(\Omega)}^2 + 
\delta \norm{\partial_t \phi^\delta}_{L^2(\Omega)}^2 + \norm{\vect{h}^\delta}_{L^2(\Omega)}^2 = \int_\Omega \phi^\delta \vv \cdot \nabla \mu^\delta\, \d x - \int_\Omega \prt{\vv \cdot \nabla}\di^\delta \cdot \vect{h}^\delta\, \d x.
\end{aligned}
\end{equation}
We now proceed with the control of the terms on the right-hand side. By using \eqref{GN}, \eqref{Rev-SI} and \eqref{VCH-LC:reg2}, we have
$$
\begin{aligned}
\abs{\int_\Omega \phi^\delta \vv \cdot \nabla \mu^\delta\, \d x}&\leq \norm{\phi^\delta}_{L^\infty(\Omega)}\norm{\vv}_{L^2(\Omega)}\norm{\nabla \mu^\delta}_{L^2(\Omega)}\leq \frac{1}{2}\norm{\nabla \mu^\delta}_{L^2(\Omega)}^2 + \frac{1}{2}\norm{\vv}_{L^2(\Omega)}^2,
\end{aligned}
$$
and
$$
\begin{aligned}
&\abs{\int_\Omega \prt{\vv \cdot \nabla}\di^\delta \cdot \vect{h}^\delta\, \d x}
\\
&= \int_\Omega \prt{\vv\cdot \nabla}\di^\delta\cdot \left[\kappa \Delta \di^\delta -\alpha \abs{\di^\delta}^2 \di^\delta + \alpha \prt{\phi^\delta-\phi_{\rm cr}} \di^\delta -\beta \prt{\di^\delta \cdot \nabla \phi^\delta}\nabla \phi^\delta\right]\, \d x\\
&= \int_\Omega \kappa \prt{\vv \cdot \nabla}\di^\delta \cdot \Delta \di^\delta \, \d x + \alpha \int_\Omega \prt{\vv \cdot \nabla}\di^\delta\cdot \prt{\phi^\delta-\phi_{\rm cr}}\di^\delta\, \d x -\beta \int_\Omega \prt{\vv \cdot \nabla}\di^\delta \cdot \prt{\di^\delta \cdot \nabla \phi^\delta}\nabla \phi^\delta\, \d x\\
&\leq \kappa \norm{ \nabla \vv}_{L^\infty(\Omega)}\norm{\nabla \di^\delta}_{L^2(\Omega)}^2 + \alpha \norm{\vv}_{L^2(\Omega)}\norm{\nabla \di^\delta}_{L^2(\Omega)}\norm{\phi^\delta-\phi_{\rm cr}}_{L^\infty(\Omega)} \norm{\di^\delta}_{L^\infty(\Omega)} \\
&\quad+ \beta \norm{\vv}_{L^\infty(\Omega)}\norm{\nabla \di^\delta}_{L^2(\Omega)} \norm{\di^\delta}_{L^\infty(\Omega)}\norm{\nabla \phi^\delta}_{L^4(\Omega)}^2\\
&\leq C_m \norm{\vv}_{L^2(\Omega)}^2 +  C \prt{1+ \norm{\nabla \di^\delta}_{L^2(\Omega)}^2}\norm{\nabla \di^\delta}_{L^2(\Omega)}^2 
+ C_m \mathcal{D}_\infty \norm{\vv}_{L^2(\Omega)}\norm{\nabla \di^\delta}_{L^2(\Omega)} \norm{\nabla \phi^\delta}_{L^4(\Omega)}^2\\
&\leq C_m \norm{\vv}_{L^2(\Omega)}^2 +  C \prt{1+ \norm{\nabla \di^\delta}_{L^2(\Omega)}^2} 
\left( \norm{\nabla \di^\delta}_{L^2(\Omega)}^2 
+ \norm{\nabla \phi^\delta}_{L^4(\Omega)}^4 \right).
\end{aligned}
$$
Hence, we arrive at
\begin{equation}
    \label{eq:der_energy}
    \begin{aligned}
        \dt \left(\mathrm{E}^\gamma_{\rm free}(\phi^\delta, \di^\delta) -(2\pi)^nE_0\right) &+ \delta \norm{\partial_t \phi^\delta}_{L^2(\Omega)}^2 + \frac{1}{2}\norm{\nabla \mu^\delta}_{L^2(\Omega)}^2 + \norm{\vect{h}^\delta}_{L^2(\Omega)}^2\\
    &\leq C \norm{\vv}_{L^2(\Omega)}^2 +  C \prt{1+ \norm{\nabla \di^\delta}_{L^2(\Omega)}^2}
    \left(\mathrm{E}^\gamma_{\rm free}(\phi^\delta, \di^\delta) -(2\pi)^nE_0\right),
    \end{aligned}
\end{equation}
where the positive $C$ depends on $\alpha, \beta, \gamma, \varepsilon, \kappa, n, m, \mathcal{D}_\infty$, but it is independent of $\delta$, and $E_0$ is defined in Section \ref{sub:mr}.
We deduce from the Gronwall lemma together with \eqref{est:vv:0} that
\begin{equation}
    \label{EE:vv}
    \begin{split}
       & 
       \left( E^\gamma_{\rm{free}}(\phi^\delta(t),\di^\delta(t)) - (2\pi)^nE_0\right)
       + \delta \int_0^t  \norm{\partial_t \phi^\delta(s)}_{L^2(\Omega)}^2 \, \d s + \frac{1}{2} \int_0^t \norm{\nabla \mu^\delta(s)}_{L^2(\Omega)}^2 \, \d s
       + \int_0^t \norm{\vect{h}^\delta(s)}_{L^2(\Omega)}^2 \, \d s \\
       &\quad \leq 2 \left( E^\gamma_{\rm{free}}(\phi_0,\di_0) -(2\pi)^nE_0+ C \int_0^t \|\vv(s) \|_{L^2(\Omega)}^2 \, \d s \right)
       \mathrm{exp}\left(C \int_0^t \left( 1+ \norm{\nabla \di^\delta(s)}_{L^2(\Omega)}^2 \right) \, \d s\right)
       \\
       &\quad \leq 2 \left( E^\gamma_{\rm{free}}(\phi_0,\di_0) -(2\pi)^nE_0 + C \int_0^{t} \|\vv(s) \|_{L^2(\Omega)}^2 \, \d s \right)
       \mathrm{exp}\left(CT_\star^\delta 
       + C \frac{\textcolor{black}{\left(\mathcal{D}_2^\star\right)^2}}{\kappa}\right), \quad \forall \, t \in [0,T_\star^\delta].
    \end{split}
\end{equation}
Recalling the uniform bounds
\begin{equation}
    \label{est:vv:2}
    \| \phi^\delta\|_{L^\infty(\Omega \times (0,T_\star^\delta))}\leq 1, \quad 
    \| \di^\delta\|_{L^\infty(\Omega \times (0,T_\star^\delta))}\leq \mathcal{D}_\infty,
\end{equation}
and owing to \eqref{EE:vv}, we obtain
\begin{equation}
\label{est:vv:1}
\begin{aligned}
&\| \phi^\delta\|_{L^\infty(0,T_\star^\delta;W^{1,4}(\Omega))}
+ \| \di^\delta\|_{L^\infty(0,T_\star^\delta; H^1(\Omega))}\leq  C \mathrm{exp}(C T_\star^\delta)
\\
& \| \nabla \mu^\delta\|_{L^2(0,T_\star^\delta;L^2(\Omega))}+ \sqrt{\delta} \| \partial_t \phi^\delta \|_{L^2(0,T_\star^\delta;L^2(\Omega))}
+ \| \vect{h}^\delta\|_{L^2(0,T_\star^\delta;L^2(\Omega))}
\leq C \mathrm{exp}(C T_\star^\delta),
\end{aligned}
\end{equation}
for some positive constant $C$ depending on $\alpha$, $\beta$, $\gamma$, $\varepsilon$, $\kappa$, $n$, $m$, $\mathcal{D}_\infty$, $E_{\rm free}^\gamma(\phi_0,\di_0)$ and $\| \vv\|_{L^2(0,T_\star^\delta; L^2(\Omega))}$, but it is independent of $\delta$. 
Let us now consider 
$\vect{h}^\delta = \kappa \Delta \vect{d}^\delta -\alpha \abs{\vect{d}^\delta}^2 \vect{d}^\delta + \alpha (\phi^\delta-\phi_{\rm cr})\vect{d}^\delta - \beta \left(\vect{d}^\delta\cdot \nabla \phi^\delta\right)\nabla \phi^\delta$.
By \eqref{est:vv:2} and \eqref{est:vv:1}, 
we notice that
$$
\int_0^{T_\star^\delta} \norm{(\di^\delta(s) \cdot \nabla \phi^\delta(s))\nabla \phi^\delta(s)}_{L^2(\Omega)}^2 \, \d s 
\leq 
\int_0^{T_\star^\delta} \norm{\di(s)}_{L^\infty(\Omega)}^2 \norm{\nabla \phi^\delta(s)}_{L^4(\Omega)}^4 \, \d s \leq C T_\star^\delta \mathrm{exp}(C T_\star^\delta).
$$
Then, exploiting once again \eqref{est:vv:2} and \eqref{est:vv:1}, we find 
\begin{equation}
\label{est:vv:3}
\|\di^\delta\|_{L^2(0,T_\star^\delta;H^2(\Omega))}\leq C(1+T_\star^\delta)^\frac12 \mathrm{exp}(C T_\star^\delta).
\end{equation}
In addition, by \eqref{VCH-LC:wp3}
\begin{align*}
\| \partial_t \di^\delta\|_{L^2(0,T_\star^\delta;L^2(\Omega))}
&\leq C_m\| \vv\|_{L^2(0,T_\star^\delta;L^2(\Omega))}
\| \nabla \di^\delta\|_{L^\infty(0,T_\star^\delta;L^2(\Omega))}
+ \| \vect{h}^\delta\|_{L^2(0,T_\star^\delta;L^2(\Omega))},
\end{align*}
which, in turn, gives
\begin{equation}
    \label{est:vv:4}
    \| \partial_t \di^\delta\|_{L^2(0,T_\star^\delta;L^2(\Omega))} 
    \leq C \mathrm{exp}(C T_\star^\delta).
\end{equation}
Next, we observe that 
\begin{equation}
\label{media-mu-d}
{\color{black}
\overline{\mu^\delta}=
\frac{1}{\varepsilon}  \overline{\Psi'(\phi^\delta)} - \frac{\alpha}{2 \abs{\Omega}}  \| \di^\delta \|_{L^2(\Omega)}^2.
}
\end{equation}
Testing \eqref{eq:def_mu} by $\phi^\delta-\overline{\phi^\delta}$, we have
$$
\begin{aligned}
    &\int_{\Omega} \varepsilon |\nabla \phi^\delta|^2 + \gamma | \nabla \phi^\delta|^4 + \beta |\nabla \phi^\delta\cdot \di^\delta|^2 \, \d x 
+ {\color{black} \frac{1}{\varepsilon}} \int_{\Omega} F'(\phi^\delta) (\phi^\delta-\overline{\phi^\delta}) \, \d x  \\
&= - \delta \int_\Omega \partial_t \phi^\delta(\phi^\delta-\overline{\phi^\delta})\, \d x + \int_{\Omega} \mu^\delta (\phi^\delta-\overline{\phi^\delta}) \, \d x 
+ \Theta_0 \int_{\Omega} \phi^\delta (\phi^\delta-\overline{\phi^\delta}) \, \d x
- \frac{\alpha}{2} \int_{\Omega} |\di^\delta|^2 (\phi^\delta-\overline{\phi^\delta})\, \d x.
\end{aligned}
$$
Thanks to \eqref{H1:equiv}, we get
\begin{align*}
{\color{black} \frac{1}{\varepsilon}}\int_{\Omega} F'(\phi^\delta) (\phi^\delta-\overline{\phi^\delta}) \, \d x
&\leq \delta\norm{\partial_t \phi^\delta}_{L^2(\Omega)} \norm{\phi^\delta- \overline{\phi^\delta}}_{L^2(\Omega)} +C\| \nabla \mu^\delta\|_{L^2(\Omega)} \| \phi^\delta - \overline{\phi^\delta}\|_{L^2(\Omega)} \\
&\quad +C \|\phi^\delta - \overline{\phi^\delta} \|_{L^2(\Omega)}^2 + C \|\di^\delta\|_{L^2(\Omega)}^2 \| \phi^\delta\|_{L^\infty(\Omega)}\\
&\leq  \delta C \norm{\partial_t \phi^\delta}_{L^2(\Omega)}+ C\| \nabla \mu^\delta\|_{L^2(\Omega)}+C.
\end{align*}
Recalling the well-known inequality in \cite[Proposition A.1]{miranville2004robust}
$$
\int_{\Omega} |F'(\phi^\delta)|\, \d x
\leq C_1 \int_{\Omega} F'(\phi^\delta) (\phi^\delta-\overline{\phi}) \, \d x+ C_2,
$$
where $C_1$ and $C_2$ are positive constants depending on $\Omega$ and $\overline{\phi_0}$, we infer from \eqref{est:vv:1} that
$$
\int_0^{T_\star^\delta} \| F'(\phi^\delta(s))\|_{L^1(\Omega)}^2 \, \d s
\leq C T_\star^\delta +C \int_0^{T_\star^\delta}
\delta\| \partial_t \phi^\delta\|_{L^2(\Omega)}^2
+\| \nabla \mu^\delta(s)\|_{L^2(\Omega)}^2  \, \d s
\leq C(1+T_\star^\delta)\mathrm{exp}(C T_\star^\delta).
$$
As a consequence, we deduce from \eqref{H1:equiv} and \eqref{media-mu-d} that 
\begin{equation}
\label{est:vv:5}
\| \mu^\delta\|_{L^2(0,T_\star^\delta; H^1(\Omega))}\leq C(1+T_\star^\delta)^\frac12 \mathrm{exp}(C T_\star^\delta).
\end{equation}
For any $k \in \mathbb{N}$, let us introduce
\begin{equation}
h_{k}:\mathbb{R}\rightarrow \mathbb{R},
\quad 
{\color{black}
h_{k}(s)=%
\begin{cases}
-k, & s<-k, \\
s, & s\in \lbrack -k,k], \\
k, & s>k,%
\end{cases}
}
\quad \text{and}
\quad 
F_k': (-1,1) \rightarrow \R, 
   \quad
F_k'(s)= h_k(s) \circ F'(s).
\label{trunc_F}
\end{equation}
We multiply \eqref{VCH-LC:wp2} by $F'_k(\phi^\delta)$ and integrate over $\Omega$. 
Since $F'_k(\phi^\delta) \in W^{1,2}(\Omega)$ almost everywhere in $(0,T_\star^\delta)$ and $\nabla F'_k(\phi^\delta)= F''_k(\phi^\delta) \nabla \phi^\delta$ almost everywhere in $\Omega \times (0,T_\star^\delta)$, we have 
\begin{equation*}
\begin{split}
     &\int_{\Omega} F_k''(\phi^\delta)\left[\varepsilon \abs{\nabla \phi^\delta}^2 + \gamma \abs{\nabla \phi^\delta}^4 + \beta\abs{\nabla \phi^\delta \cdot \di^\delta}^2\right]\, \d x +  {\color{black} \frac{1}{\varepsilon}} \int_\Omega F'(\phi^\delta) F'_k(\phi^\delta) \, \d x \\
     & \quad
     \leq  \int_\Omega \left( \mu^\delta {\color{black} - \delta \partial_t \phi^\delta} +\Theta_0 \phi^\delta + \frac{\alpha}{2} |\di^\delta|^2 \right) F'_k(\phi^\delta) \, \d x.
    \end{split}
\end{equation*}
Observing that $F'(s)s\geq 0$, we infer that
\begin{equation}
    \frac{1}{\varepsilon} \int_0^{T_\star^\delta} \int_\Omega |F'_k(\phi^\delta(s))|^2 \, \d x \, \d s
    \leq 
    {\color{black} 2\varepsilon 
    \int_0^{T_\star^\delta}
    \| \mu^\delta(s)\|_{L^2(\Omega)}^2 
    + \delta^2 \| \partial_t \phi\|_{L^2(\Omega)}^2}
    + \Theta_0^2 \| \phi^\delta(s)\|_{L^2(\Omega)}^2 
    + \alpha^2 \| \di^\delta(s)\|_{L^4(\Omega)}^4 \, \d s.
\end{equation}
In light of $|\phi^\delta(x,t)|<1$ almost everywhere in $\Omega\times (0,T_\star^\delta)$, $F_k'(\phi^\delta)\rightarrow F'(\phi^\delta)$ almost everywhere in $\Omega\times (0,T_\star^\delta)$. Thus, the Fatou lemma {\color{black} and the estimates \eqref{est:vv:1} and \eqref{est:vv:5}, yield } that 
\begin{equation}
\label{est:vv:6}
    \int_0^{T_\star^\delta} \int_\Omega |F'(\phi^\delta(s))|^2 \, \d x \, \d s \leq C(1+T_\star^\delta)\,\mathrm{exp}(C T_\star^\delta).
\end{equation}
Now, multiplying \eqref{VCH-LC:wp2} by $-\Delta \phi^\delta$ and integrating over $\Omega$, we find
\begin{equation}
\label{est:vv:7}
\begin{split}
&
\frac{\delta}{2}\dt \| \nabla \phi^\delta\|_{L^2(\Omega)}^2
+\varepsilon\norm{\Delta \phi^\delta}_{L^2(\Omega)}^2 + \gamma \int_\Omega \diver \left( |\nabla \phi^\delta |^2 \nabla \phi^\delta \right) \Delta \phi^\delta \, \d x - \frac{1}{\varepsilon} \int_\Omega F'(\phi^\delta) \Delta \phi^\delta \, \d x\\
&
= \int_\Omega \nabla \mu^\delta \cdot \nabla \phi^\delta \, \d x
+ \frac{\Theta_0}{\varepsilon} 
\int_{\Omega} |\nabla \phi^\delta|^2 \, \d x
+\frac{\alpha}{2}\int_\Omega \Delta \phi^\delta \abs{\di^\delta}^2\, \d x -
\beta \int_\Omega \diver\prt{(\nabla \phi^\delta \cdot \di^\delta)\di^\delta} \Delta \phi^\delta \, \d x.
\end{split}
\end{equation}
For any $k \in \N$, let $F'_k(s)$ be the function defined in \eqref{trunc_F}. We observe that
\begin{equation}
\label{F2:pos}
\begin{split}
-\frac{1}{\varepsilon}\int_\Omega F'(\phi^\delta) \Delta \phi^\delta \, \d x 
&=-\frac{1}{\varepsilon}\int_\Omega F_k'(\phi^\delta) \Delta \phi^\delta \, \d x 
-\frac{1}{\varepsilon}\int_\Omega \left( F'(\phi^\delta)
-F'_k(\phi^\delta) \right) \Delta \phi^\delta \, \d x 
\\
&=\frac{1}{\varepsilon}\int_\Omega F_k''(\phi^\delta) |\nabla \phi^\delta|^2 \, \d x 
-\frac{1}{\varepsilon}\int_\Omega \left( F'(\phi^\delta)
-F'_k(\phi^\delta) \right) \Delta \phi^\delta \, \d x
\\
&\geq -\frac{1}{\varepsilon}\int_\Omega \left( F'(\phi^\delta)
-F'_k(\phi^\delta) \right) \Delta \phi^\delta \, \d x.
\end{split}
\end{equation}
{\color{black} Since $|F_k'(\phi^\delta)| \nearrow |F'(\phi^\delta)|$ almost everywhere} in $\Omega$ and $F'(\phi^\delta) \in L^2(\Omega)$ for almost any $t\in (0,T)$, it easily follows that $\int_\Omega \left( F'(\phi^\delta)
-F'_k(\phi^\delta) \right) \Delta \phi^\delta \, \d x\rightarrow 0$ as $k \rightarrow \infty$. Thus, we conclude that $-\frac{1}{\varepsilon}\int_\Omega F'(\phi^\delta) \Delta \phi^\delta \, \d x \geq 0$.
On the other hand, integrating by parts, we notice that
\begin{equation}
\label{gamma:term}
\begin{split}
    \int_\Omega \diver\left(\abs{\nabla \phi^\delta}^2 \nabla \phi^\delta\right)\Delta \phi^\delta\, \d x  &= \int_{\Omega} \partial_i\left(\abs{\nabla \phi^\delta}^2 \partial_i\phi^\delta\right)\partial_{jj}\phi^\delta \, \d x = \int_{\Omega} \partial_j(\abs{\nabla \phi^\delta}^2 \partial_i \phi^\delta) \partial_j\partial_i \phi^\delta \, \d x
    \\
   &=\int_{\Omega} \left(\partial_j \partial_i\phi^\delta \partial_j \partial_i \phi^\delta \abs{\nabla \phi^\delta}^2 + 2 \partial_j\partial_i \phi^\delta \partial_i \phi^\delta \partial_j\partial_l\phi^\delta \partial_l\phi^\delta \right)\, \d x 
    \\
   &=  \int_{\Omega} \abs{D^2 \phi^\delta}^2 \abs{\nabla \phi^\delta}^2 + \frac{1}{4} \left| \nabla\left(\abs{\nabla \phi^\delta}^2\right) \right|^2 \, \d x,
   \end{split}
\end{equation}
and
\begin{equation}
\label{beta:term}
\begin{split}
     \int_\Omega \diver\prt{(\nabla \phi^\delta \cdot \di^\delta)\di^\delta} \Delta \phi^\delta \, \d x 
    &=  \int_\Omega \partial_l \left( (\nabla \phi^\delta \cdot \di^\delta)\di^\delta_j \right) \partial_l \partial_j \phi^\delta \, \d x
    \\
    &= \int_\Omega \partial_l (\nabla \phi^\delta \cdot \di^\delta) \di^\delta_j \partial_l \partial_j \phi^\delta \, \d x
    + \int_\Omega (\nabla \phi^\delta \cdot \di^\delta) \partial_l \di^\delta_j \partial_l \partial_j \phi^\delta \, \d x
    \\
    &= -  \int_\Omega \nabla(\nabla \phi^\delta \cdot \di^\delta) \cdot (\nabla (\di^\delta)^T \nabla \phi^\delta) \, \d x
    +  \int_\Omega (\nabla \phi^\delta \cdot \di^\delta) \nabla \di^\delta : D^2 \phi^\delta \, \d x\\
    &\quad +\underbrace{\int_\Omega |\nabla(\nabla \phi^\delta \cdot \di^\delta)|^2 \, \d x}_{\geq 0}.
    \end{split}
\end{equation}
A straightforward computation shows that
\begin{align}
\label{est:vv:8}
\left| \frac{\alpha}{2}\int_\Omega \Delta \phi^\delta \abs{\di^\delta}^2\, \d x \right| 
&
\leq \frac{\varepsilon}{4}
\| \Delta \phi^\delta\|_{L^2(\Omega)}^2 
+ \frac{\alpha^2 \abs{\Omega} C(n)}{4 \varepsilon} \mathcal{D}_\infty^4,\\
\label{est:vv:9}
\left| \beta \int_\Omega (\nabla \phi^\delta \cdot \di^\delta) \nabla \di^\delta : D^2 \phi^\delta \, \d x \right|
       &\leq \frac{\gamma}{4} \int_\Omega \abs{D^2 \phi^\delta}^2 \abs{\nabla \phi^\delta}^2 \, \d x
       + \frac{\beta^2 C(n)}{\gamma} D_\infty^2 \| \nabla \di^\delta\|_{L^2(\Omega)}^2,
\end{align}
and 
\begin{equation}
\label{est:vv:10}
    \left| \left( \frac{\Theta_0}{\varepsilon}+ \frac12 \right) \norm{\nabla \phi^\delta}_{L^2(\Omega)}^2 \right| \leq \frac{\varepsilon}{4}\| \Delta \phi^\delta\|_{L^2(\Omega)}^2 + \frac{1}{\varepsilon}\left( \frac{\Theta_0}{\varepsilon}+ \frac12 \right)^2 |\Omega|.
\end{equation}
By \eqref{H2:equiv} and \eqref{GN}, we have
\begin{equation}
\label{est:vv:11}
    \begin{split}
         \left| \beta  \int_\Omega \nabla(\nabla \phi^\delta \cdot \di^\delta) \cdot (\nabla (\di^\delta)^T \nabla \phi^\delta) \, \d x \right| 
         &=
         \beta \left| \int_\Omega \partial_l \partial_i \phi^\delta \di^\delta_i \partial_l \di^\delta_j \partial_j \phi^\delta 
         + \partial_i \phi^\delta \partial_l \di^\delta_i \partial_l \di^\delta_j \partial_j \phi^\delta \, \d x
         \right|
         \\
         &\leq \frac{\gamma}{4} \int_\Omega \abs{D^2 \phi^\delta}^2 \abs{\nabla \phi^\delta}^2 \, \d x
       + \frac{\beta^2 C(n)}{\gamma} D_\infty^2 \| \nabla \di^\delta\|_{L^2(\Omega)}^2 
       \\
       &\quad + \beta \| \nabla \di^\delta\|_{L^4(\Omega)}^2 
       \| \nabla \phi^\delta\|_{L^4(\Omega)}^2
       \\
       &\leq \frac{\gamma}{4} \int_\Omega \abs{D^2 \phi^\delta}^2 \abs{\nabla \phi^\delta}^2 \, \d x
       + \frac{\beta^2 C(n)}{\gamma} D_\infty^2 \| \nabla \di^\delta\|_{L^2(\Omega)}^2 
       \\
       &\quad + \frac{\varepsilon}{4} \norm{\Delta \phi^\delta}_{L^2(\Omega)}^2 
       + C \| \di^\delta\|_{H^2(\Omega)}^2 
       +C |\overline{\phi_0}|^2. 
    \end{split}
\end{equation}
Thus, we infer that 
\begin{equation}
\label{est:vv:12}
\begin{split}
& 
 \frac{\delta}{2}\dt \| \nabla \phi^\delta\|_{L^2(\Omega)}^2
+\frac{\varepsilon}{4} \norm{\Delta \phi^\delta}_{L^2(\Omega)}^2 + \frac{\gamma}{2} 
\int_{\Omega} \abs{D^2 \phi^\delta}^2 \abs{\nabla \phi^\delta}^2 \, \d x
+\frac{\gamma}{4} \int_\Omega \left| \nabla\left(\abs{\nabla \phi^\delta}^2\right) \right|^2 \, \d x
\\
&\leq 
\frac12 \norm{\nabla \mu^\delta}_{L^2(\Omega)}^2
+ C \| \di^\delta\|_{H^2(\Omega)}^2+C,
\end{split}
\end{equation}
where the positive constants $C$ depends only on $\beta$, $\gamma$, $\varepsilon$, $n$ and $\mathcal{D}_\infty$. Integrating in time and exploiting \eqref{est:vv:1}-\eqref{est:vv:3}, we find
\begin{equation}
\label{est:vv:13}
    \frac{\varepsilon}{4} \int_0^{T_\star^\delta}\norm{\Delta \phi^\delta(s)}_{L^2(\Omega)}^2 \, \d s 
    + \frac{\gamma}{4} \int_0^{T_\star^\delta} 
\int_{\Omega} \left| \nabla\left(\abs{\nabla \phi^\delta}^2\right) \right|^2 \, \d x \, \d s
\leq 
C(1+T_\star^\delta) \mathrm{exp}(C T_\star^\delta),
\end{equation}
for some positive constant $C$ as in \eqref{est:vv:1}.
Owing to \eqref{est:vv:2}-\eqref{est:vv:13}, and arguing by comparison in \eqref{VCH-LC:wp2}, it follows that 
$$
\| -\diver \left( (\varepsilon+ \gamma |\nabla \phi^\delta|^2) \phi^\delta\right)\|_{L^2(0,T^\delta_\star; L^2(\Omega))} \leq  C(1+T_\star^\delta)^\frac12 \mathrm{exp}(C T_\star^\delta).
$$ 
Then, by \cite[Theorem 2.6]{CM2019}, we conclude that 
\begin{equation}
    \label{est:vv:14}
    \norm{|\nabla \phi^\delta|^2 \nabla \phi^\delta}_{L^2(0,T_\star^\delta; H^1(\Omega;\R^n))}
    \leq  C(1+T_\star^\delta)^\frac12 \mathrm{exp}(C T_\star^\delta).
\end{equation}
Finally, it is easily seen from \eqref{VCH-LC:wp1} that 
\begin{equation}
\label{est:vv:15}
    \| \partial_t \phi^\delta\|_{L^2(0,T_\star^\delta;H^1(\Omega)')} 
    \leq 
    \| \vv\|_{L^2(0,T;L^2(\Omega))} 
    + \| \nabla \mu^\delta\|_{L^2(0,T_\star^\delta;L^2(\Omega))}
    \leq C(1+T_\star^\delta)^\frac12 \mathrm{exp}(C T_\star^\delta).
\end{equation}
In light of the above estimates \eqref{est:vv:2}, \eqref{est:vv:3}, \eqref{est:vv:4}, \eqref{est:vv:5}, \eqref{est:vv:13}, \eqref{est:vv:14} and \eqref{est:vv:15}, we have shown that all the norms corresponding to the function spaces in \eqref{VCH-LC:reg1}-\eqref{VCH-LC:reg6} remain bounded for any $T_\star^\delta$ finite, and thereby the quadruplet $(\phi^\delta, \di^\delta, \mu^\delta, \hh^\delta)$ can be extended on $[0,T]$. In particular, the estimates \eqref{est:vv:2}, \eqref{est:vv:3}, \eqref{est:vv:4}, \eqref{est:vv:5}, \eqref{est:vv:13}, \eqref{est:vv:14} and \eqref{est:vv:15} hold by replacing $T_\star^\delta$ with $T$. Thus, by classical compactness results we find (up to subsequences) that
\begin{equation}
\label{vv:conv1}
\begin{split}
\begin{aligned}
&\phi^\delta \rightharpoonup \phi \  &&\text{weak-star in } L^\infty(0,T;W^{1,4}(\Omega)),
\quad 
&&&\phi^\delta \rightharpoonup \phi \  &&&&\text{weak-star in } L^\infty(\Omega \times (0,T)),
\\
&\phi^\delta \rightharpoonup \phi \  &&\text{weakly in } 
L^2(0,T;H^2(\Omega)),\quad
&&&\phi^\delta \rightarrow \phi \
&&&&\text{strongly in }
L^2(0,T; W^{1,p}(\Omega)),
\\
&\partial_t \phi^\delta \rightharpoonup \partial_t \phi \ &&\text{weakly in } 
L^2(0,T; H^1(\Omega)'),
\quad 
&&&|\nabla \phi^\delta|^2 \rightharpoonup |\nabla \phi|^2 \
&&&&\text{weakly in } 
L^2(0,T;H^1(\Omega)),\\
&\mu^\delta \rightharpoonup \mu \ &&\text{weakly in } L^2(0,T;H^1(\Omega)), 
\quad 
&&&\di^\delta \rightharpoonup \di \ &&&&\text{weakly in } L^2(0,T;H^2(\Omega)),
\\
&\di^\delta \rightarrow \di \  &&\text{strongly in } L^2(0,T;H^1(\Omega)),
\quad 
&&&\partial_t \di^\delta \rightharpoonup \partial_t \di \  &&&&\text{weakly in } L^2(0,T;L^2(\Omega)),\\
&\hh^\delta \rightharpoonup \hh  \quad &&\text{weakly in } L^2(0,T;L^2(\Omega)),
\end{aligned}
\end{split}
\end{equation}
for $p<6$ if $d=3$ and any $p \in [2,\infty)$ if $d=2$, where 
\begin{equation}
    \label{est:vv:l1}
    \| \phi\|_{L^\infty(\Omega \times (0,T))}\leq 1, \quad 
    \| \di\|_{L^\infty(\Omega \times (0,T))}\leq \mathcal{D}_\infty.
\end{equation}
Thanks to the above convergences, we are in the position to pass to limit as $\delta\rightarrow 0$ in \eqref{VCH-LC:wp1}-\eqref{VCH-LC:wp4}. The case of \eqref{VCH-LC:wp3}-\eqref{VCH-LC:wp4} is standard. Concerning \eqref{VCH-LC:wp1}-\eqref{VCH-LC:wp2},
we infer from \eqref{vv:conv1} that $\phi^\delta  \rightarrow \phi$ almost everywhere in $\Omega \times (0,T)$, which entails that $F'(\phi^\delta) \rightarrow \widetilde{F'}(\phi)$ almost everywhere in $\Omega \times (0,T)$, where $\widetilde{F'}(s)=F'(s)$ if $s \in (-1,1)$ and $\widetilde{F'}(\pm 1)=\pm \infty$. Nevertheless, by the Fatou lemma and \eqref{est:vv:6}, $\int_{\Omega \times (0,T)} |\widetilde{F'}(\phi)|^2 \, \d x \d s\leq C$, which implies that
$\phi \in L^\infty(\Omega\times (0,T))$ such that $|\phi(x,t)| < 1$ for almost every $(x,t) \in \Omega\times(0,T)$.
Hence, $\widetilde{F'}(\phi)= F'(\phi)$ almost everywhere in $\Omega \times (0,T)$ and
$F'(\phi^\delta) \rightharpoonup F'(\phi)$  weakly in $L^2(0,T;L^2(\Omega))$.
Letting $\delta \rightarrow 0$ in \eqref{VCH-LC:wp1}-\eqref{VCH-LC:wp2}, we obtain 
\begin{subequations}
\begin{alignat}{2}
\label{CH-LC:wp1-pr}
    &\langle \partial_t \phi, \xi \rangle_{H^1(\Omega)}
    - (\phi \vv, \nabla \xi)
    + (\nabla \mu, \nabla \xi)
    =0, \\
\label{CH-LC:wp2-pr}
    &(\mu,\zeta)=
     \left( \left( (\varepsilon+ \gamma |\nabla \phi|^2) \nabla \phi \right), \nabla \zeta\right) 
    + \frac{1}{\varepsilon} \left( \Psi'(\phi), \zeta \right)
    -\frac{\alpha}{2} \left( |\di|^2,\zeta \right) 
    - \beta (\diver \left( (\nabla \phi \cdot \di) \di \right), \zeta),
\end{alignat}
\end{subequations}
for all $\xi, \zeta \in H^1(\Omega)$, almost every $t \in (0,T)$. 
On other hand, since 
$\mu - \frac{1}{\varepsilon} \Psi'(\phi)+ \frac{\alpha}{2}|\di|^2 +\beta \diver \left( (\nabla \phi \cdot \di) \di \right) \in L^2(0,T;L^2(\Omega)),
$
taking advantage of \cite[Theorem 2.6]{CM2019}, we have 
$|\nabla \phi|^2 \nabla \phi \in L^2(0,T; H^1(\Omega))$, 
and thereby
$$
\mu=
    - \diver \left( (\varepsilon+ \gamma |\nabla \phi|^2) \nabla \phi \right) 
    + \frac{1}{\varepsilon} \Psi'(\phi)
    -\frac{\alpha}{2}  |\di|^2 
    - \beta \diver \left( (\nabla \phi \cdot \di) \di \right), \quad \text{a.e. in } \Omega \times (0,T).
$$
To sum up, for any $\vv \in C([0,T], \vect{V}_m)$, there exists a quadruplet $(\phi, \di, \mu, \vect{h})$ on $[0,T]$ 
such that 
\begin{align}
    \label{CH-LC:reg1}
    & \phi \in L^\infty(0,T; W^{1,4}(\Omega))\cap L^2(0,T;H^2(\Omega)),\\
    \label{CH-LC:reg2}
    &\phi \in L^\infty(\Omega \times (0,T)) \, \text{with }
    |\phi(x,t)|<1 \, \text{a.e. in } \Omega \times (0,T),\\
    \label{CH-LC:reg3}
    &\partial_t \phi \in L^2(0,T; H^1(\Omega)'),
    \quad |\nabla \phi|^2 \nabla \phi \in L^2(0,T;H^1(\Omega)), \quad 
    F'(\phi)\in L^2(0,T;L^2(\Omega)),
    \\
    \label{CH-LC:reg4}
    & \di \in C([0,T];H^1(\Omega))\cap L^2([0,T];H^2(\Omega)) \cap
    H^1(0,T; L^2(\Omega)),
    \\
     \label{CH-LC:reg5}
    &  \di \in L^\infty(\Omega \times (0,T)) \, \text{with }
    |\di(x,t)|\leq \mathcal{D}_\infty \, \text{a.e. in } \Omega \times (0,T),
    \\
    \label{CH-LC:reg6}
    &\mu \in L^2(0,T;H^1(\Omega)),
    \quad 
    \vect{h} \in L^2(0,T; L^2(\Omega)),
    \end{align}
satisfying
\begin{subequations}
\begin{alignat}{2}
\label{CH-LC:wp1}
    & \langle \partial_t \phi, \xi \rangle_{H^1(\Omega)}
    - (\phi \vv, \nabla \xi)
    + (\nabla \mu, \nabla \xi)
    =0, \quad &&\forall \, \xi \in H^1(\Omega), \text{ a.e. in }(0,T),
    \\
    \label{CH-LC:wp2}
    &\mu=
    - \diver \left( (\varepsilon+ \gamma |\nabla \phi|^2) \nabla \phi \right) 
    + \frac{1}{\varepsilon} \Psi'(\phi)
    -\frac{\alpha}{2}  |\di|^2 
    - \beta \diver \left( (\nabla \phi \cdot \di) \di \right), \quad &&\text{a.e. in } \Omega \times (0,T),
    \\
    \label{CH-LC:wp3}
    &\partial_t \di + \prt{\vv \cdot \nabla}\di = -\vect{h}, \quad 
    &&\text{a.e. in } \Omega \times (0,T),\\
    \label{CH-LC:wp4}
    &\vect{h} = -\kappa \Delta \di +\alpha \abs{\di}^2 \di - \alpha(\phi -\phi_{\rm cr}) \di +\beta \prt{\di \cdot \nabla \phi}\nabla \phi, \quad &&
    \text{a.e. in } \Omega \times (0,T),
\end{alignat}
\end{subequations}
and  
\begin{equation}
\label{CH-LC:wp5}
\phi(\cdot,0)=\phi_0, \quad \di(\cdot, 0)=\di_0 \quad \text{in } \Omega.
\end{equation}
In addition, passing to the limit in \eqref{EEE1}, the free energy inequality
\begin{equation}
\label{CH-LC:EE}
\begin{split}
&E_{\rm free}^\gamma(\phi(t), \di(t)) + 
\int_0^t \norm{\nabla \mu(s)}_{L^2(\Omega)}^2 \, \d s+ 
\int_0^t \norm{\vect{h}(s)}_{L^2(\Omega)}^2 \, \d s \\
&\quad \leq 
E_{\rm free}^\gamma(\phi_0, \di_0)
+\int_0^t \int_\Omega \phi \vv \cdot \nabla \mu \, \d x \, \d s-
\int_0^t \int_\Omega \prt{\vv \cdot \nabla}\di \cdot \vect{h}\, \d x \, \d s
\end{split}
\end{equation}
holds for all $t\in [0,T]$. Moreover, we have from \eqref{EE:vv} that
\begin{equation}
\label{CH-LC:EE2}
\begin{split}
     \int_0^t \norm{\nabla \mu(s)}_{L^2(\Omega)}^2 \, \d s
       &+ \int_0^t \norm{\vect{h}(s)}_{L^2(\Omega)}^2 \, \d s \\
        &\leq 4 \left( E^\gamma_{\rm{free}}(\phi_0,\di_0) -(2\pi)^n E_0+ C \int_0^t \|\vv(s) \|_{L^2(\Omega)}^2 \, \d s \right)
       \mathrm{exp}\left(CT + C \| \di_0\|_{L^2(\Omega)}^2\right),
       \end{split}
\end{equation}
for all $t\in [0,T]$, where $C$ depends on $\alpha, \beta, \gamma, \varepsilon, \kappa, n, m, \mathcal{D}_\infty$. Also, there exist two positive constants $C_1^\star$, $C_2^\star$ such that
\begin{align}
\label{est:lim:0}
& \| \di\|_{L^\infty(0,T; L^2(\Omega))}
+ \| \di\|_{L^2(0,T; H^1(\Omega))}
\leq C_1^\star,
\\
\label{est:lim:1}
 &\| \phi\|_{L^\infty(0,T;W^{1,4}(\Omega))}
+\| \phi\|_{L^2(0,T;H^2(\Omega))}
+ \| |\nabla \phi|^2 \nabla \phi \|_{L^2(0,T;H^1(\Omega))}
+ \| \partial_t \phi\|_{L^2(0,T;H^1(\Omega)')} \leq C_2^\star,
\\
\label{est:lim:2}
& \| \di\|_{L^\infty(0,T; H^1(\Omega))}
+ \| \di\|_{L^2(0,T; H^2(\Omega))}
+ \| \partial_t \di\|_{L^2(0,T; L^2(\Omega))}\leq C_2^\star,
\\
\label{est:lim:3}
&\| \mu\|_{L^2(0,T;H^1(\Omega))}
+ \| \vect{h}\|_{L^2(0,T;L^2(\Omega))}
+ \|F'(\phi)\|_{L^2(0,T;L^2(\Omega))}
\leq C_2^\star,
\end{align}
where $C_1^\star$ depends on $\alpha$, $\beta$, $\gamma$, $\varepsilon$, $\kappa$, $\mathcal{D}_\infty$, $T$, $n$, whereas $C_2^\star$ depends on $\alpha$, $\beta$, $\gamma$, $\varepsilon$, $\kappa$, $m$, $\mathcal{D}_\infty$, $E^\gamma_{\rm free}(\phi_0,\di_0)$, $T$, $n$, and $\| \vv\|_{L^2(0,T; L^2(\Omega))}$. More precisely, the dependence on $T$ is only through its measure.
\medskip

\noindent
{\bf Step 3: Existence of global weak solutions for the full NS-CH-EL system.}
We introduce the semi Galerkin approximation of the momentum equations \eqref{eq:identity_RHS_NSE}. 
Let $\lbrace \zz_k \rbrace$ be the sequence of eigenfunctions of the Stokes operator. For any $N \in \N$, we define the finite dimensional space $\vect{V}_N= \text{Span} \lbrace \zz_1, \dots, \zz_N\rbrace$. We fix $\vv_N \in C([0,T]; \vect{V}_N)$ such that $\widehat{(\vv_N)}_0=\mathbf{0}$. Thanks to the previous Step 2, for any $N \in \N$, there exists a quadruplet $(\phi^N,\mu^N, \di^N, \vect{h}^N)$ satisfying \eqref{CH-LC:reg1}-\eqref{CH-LC:reg6}, the system \eqref{CH-LC:wp1}-\eqref{CH-LC:wp4} with $\vv$ replaced by $\vv_N$, and the initial conditions in \eqref{CH-LC:wp5}.
Furthermore, the energy inequality \eqref{CH-LC:EE}, the integral estimate \eqref{CH-LC:EE2} and the estimates \eqref{est:lim:0}-\eqref{est:lim:3} holds by replacing $(\phi, \di, \mu, \vect{h})$, $\vv$ and $m$ with $(\phi^N, \di^N, \mu^N, \vect{h}^N)$,  $\vv_N$ and $N$, respectively.

For any $N\in \N$, we look for a function
$\uu^N(t,x) = \sum_{k=1}^N a_k^N(t)\zz_k$, 
that solves
\begin{equation}
    \label{eq:NSE_G}
    \prt{\partial_t \uu^N, \zz_i} +\prt{(\vv_N \cdot \nabla) \uu^N, \zz_i} + \prt{\nu(\phi^N) D\uu^N, \nabla \zz_i} = \prt{\mu^N\nabla \phi^N+(\nabla \di^N)^T \vect{h}^N, \zz_i}\qquad \forall\, i = 1,\dots,n,
\end{equation}
together with the initial condition $\uu^N(0,\vect{x}) = \tens{P}_N\uu_0$, where $\tens{P}_N$ is the orthogonal projection from $L^2_\sigma(\Omega)$ onto $\vect{V}_N$.
The system \eqref{eq:NSE_G} is equivalent to a system of linear ODEs with non-constant coefficients, i.e.
\begin{equation}
\label{LODE}
\dt \texttt{A}^N(t) + \texttt{L}^N(t)\, \texttt{A}^N(t) = \texttt{G}^N(t)
\end{equation}
where $\texttt{A}^N(t) = \left[a_1^N(t),\dots,a^N_N(t)\right]^{T}$ and
$$
\begin{aligned}
&\prt{\texttt{L}^N(t)}_{i,j} = \int_\Omega \prt{\vv_N\cdot \nabla}\zz_j\cdot \zz_i + \nu(\phi^N)D\zz_j:\nabla \zz_i\, \d x,&&\prt{\texttt{G}^N(t)}_i = \int_\Omega \prt{\mu^N \nabla \phi^N +(\nabla \di^N)^T \vect{h}^N}\cdot \zz_i\, \d x.
\end{aligned}
$$
Since $\vv \in C([0,T];\vect{V}_N)$, $\phi \in C([0,T];L^2(\Omega))$ and $\di \in C([0,T];H^1(\Omega))$, we have that $\texttt{L}^N(t) \in C([0,T])$. Concerning $\texttt{G}^N(t)$, it follows from \eqref{CH-LC:reg1}, \eqref{CH-LC:reg4} and \eqref{CH-LC:reg5} that 
$\texttt{G}^N(t) \in L^2([0,T])$. 
Then, making use of the theory of systems of linear ODEs with summable coefficients (see, e.g., \cite[Chapter 3, Problem 1]{coddington1955theory}),
there exists a unique vector-valued function $\texttt{A}^N(\cdot) \in C([0,T])\cap W^{1,2}(0,T)$ fulfilling \eqref{LODE} almost everywhere in $(0,T)$ such that 
$\texttt{A}^N(0) = \left[(\uu_0,\zz_1),\dots,(\uu_0,\zz_N)\right]^{T}$. 

We multiply \eqref{eq:NSE_G} by $a^N_i(t)$ and sum over the index $i$. As a result, we find
\begin{equation}
\label{EE:NS}
\frac{1}{2}\dt \norm{\uu^N}_{L^2(\Omega)}^2 + \int_\Omega \nu(\phi^N)\abs{D\uu^N}^2\, \d x 
= -\int_\Omega \prt{\nabla \mu^N  \phi^N-\left(\nabla \di^N\right)^T\vect{h}^N} \cdot \uu^N\, \d x.
\end{equation}
By using \eqref{Rev-SI} and \eqref{CH-LC:reg2}, we arrive at
$$
\begin{aligned}
\left| \int_\Omega \prt{-\nabla \mu^N  \phi^N+(\nabla \di^N)^T\vect{h}^N} \cdot \uu^N\, \d x \right| 
&\leq \norm{\phi^N}_{L^\infty(\Omega)}
\norm{\nabla \mu^N}_{L^2(\Omega)}
\norm{\uu^N}_{L^2(\Omega)} 
+ \norm{\nabla \di^N}_{L^2(\Omega)} \norm{\vect{h}^N}_{L^2(\Omega)} \norm{\uu^N}_{L^\infty(\Omega)}\\
&\leq \norm{\nabla \mu^N}_{L^2(\Omega)} \norm{\uu^N}_{L^2(\Omega)} + C_N \norm{\nabla \di^N}_{L^2(\Omega)} 
\norm{\vect{h}^N}_{L^2(\Omega)} \norm{\uu^N}_{L^2(\Omega)}\\
&\leq C\prt{1 + \norm{\nabla \di^N}_{L^2(\Omega)}^2}\norm{\uu^N}_{L^2(\Omega)}^2 + C \norm{\nabla \mu^N}_{L^2(\Omega)}^2 + C_N\norm{\vect{h}^N}_{L^2(\Omega)}^2.
\end{aligned}
$$
Hence, we have
$$
\dt \norm{\uu^N}_{L^2(\Omega)}^2+ 
\nu_{*} \norm{D\uu^N}_{L^2(\Omega)}^2 
\leq C\prt{1 + \norm{\nabla \di^N}_{L^2(\Omega)}^2}\norm{\uu^N}_{L^2(\Omega)}^2 + C \norm{\nabla \mu^N}_{L^2(\Omega)}^2 
+ C_N\norm{\vect{h}^N}_{L^2(\Omega)}^2.
$$
By applying the Gronwall lemma, the latter entails
\begin{equation}
    \label{est:1N}
    \begin{split}
    &\norm{\uu^N(t)}_{L^2(\Omega)}^2 + \nu_{*}\int_0^t \norm{D\uu^N(s)}_{L^2(\Omega)}^2\, \d s
    \\
    &\quad \leq  
    \left( \norm{\uu^N(0)}_{L^2(\Omega)}^2
    + C_N \int_0^t \prt{\norm{\nabla \mu^N(s)}_{L^2(\Omega)}^2 + \norm{\vect{h}^N(s)}_{L^2(\Omega)}^2}\, \d s\right)\mathrm{exp}\left( C \int_0^t 1 + \norm{\nabla \di^N(s)}_{L^2(\Omega)}^2 \, \d s\right)
    \\
     &\quad \leq  
    \left( \norm{\uu_0}_{L^2(\Omega)}^2
    + C_N \int_0^t \prt{\norm{\nabla \mu^N(s)}_{L^2(\Omega)}^2 + \norm{\vect{h}^N(s)}_{L^2(\Omega)}^2}\, \d s\right)\mathrm{exp} \left( C \int_0^t 1 + \norm{\nabla \di^N(s)}_{L^2(\Omega)}^2 \, \d s \right).
    \end{split}
\end{equation}
In light of \eqref{CH-LC:EE2} and \eqref{est:lim:0}, we observe that 
\begin{equation}
    \label{eq:nablaN}
    \int_0^T \prt{1 + \norm{\nabla \di^N(s)}_{L^2(\Omega)}^2}\, \d s
    \leq C(T),
\end{equation}
and for any $t \in[0,T]$
\begin{equation}
\label{eq:muhN}
\int_0^t \norm{\nabla \mu^N(s)}_{L^2(\Omega)}^2 + \norm{\vect{h}^N(s)}_{L^2(\Omega)}^2\, \d s \leq C_N(T,E^\gamma_{\rm free}(\phi_0,\di_0)) 
\int_0^t \norm{\vv_N(s)}_{L^2(\Omega)}^2 \, \d s + C \prt{T, E^\gamma_{\rm free}(\phi_0,\di_0)}.
\end{equation}
Here, the constants $C_N$ and $C$ also depends on the parameters of the system. Substituting the above estimates into \eqref{est:1N}, we deduce that
\begin{equation}
\label{est:2N}
\norm{\uu^N(t)}_{L^2(\Omega)}^2 + \nu_{*}\int_0^t \norm{D\uu^N(s)}_{L^2(\Omega)}^2 \, \d s \leq \widetilde{K}_1 + \widetilde{K}_2 \int_0^t \norm{\vv_N(s)}_{L^2(\Omega)}^2 \, \d s, 
\quad \forall \, t \in [0,T],
\end{equation}
where $\widetilde{K}_1, \widetilde{K}_2 := C(\|\uu_0\|_{L^2(\Omega)},E^\gamma_{\rm free}(\phi_0,\di_0),T,N)$. 
Assuming that
\begin{equation}
\label{ASS-vvN}
\int_0^t \norm{\vv_N (s)}_{L^2(\Omega)}^2 \, \d s\leq \widetilde{K}_1 T\, \mathrm{e}^{\widetilde{K}_2 t}  \quad \hbox{ for } t \in [0,T], 
\end{equation}
we obtain from \eqref{est:2N} that
\begin{equation}
\label{est:3N}
\int_0^t \norm{\uu^N(s)}_{L^2(\Omega)}^2 \, \d s 
\leq \widetilde{K}_1 T + \widetilde{K}_2 \left[\int_0^t \int_0^s \norm{\vv_N(\tau)}_{L^2(\Omega)}^2\, \d \tau\, \d s\right]
= \widetilde{K}_1 T \mathrm{e}^{\widetilde{K}_2 T}, \quad \forall \, t \in [0,T].
\end{equation}
As a direct consequence of \eqref{est:2N} and \eqref{ASS-vvN}, we also infer
\begin{equation}
    \label{est:4N}
    \| \uu^N\|_{C([0,T]; L^2(\Omega))}
    \leq \left( \widetilde{K}_1+ \widetilde{K}_1 \widetilde{K}_2 T \mathrm{e}^{\widetilde{K}_2 T} \right)^\frac12=: \widetilde{K}_3.
\end{equation}
Furthermore, multiplying \eqref{eq:NSE_G} by $\partial_t a_i^N$, summing over $i$,  integrating on $(0,T)$ and exploiting \eqref{Rev-SI}, we obtain
\begin{equation}
    \begin{split}
   \int_0^T \| \partial_t \uu^N(s)\|_{L^2(\Omega)}^2 \, \d s
    &\leq C_N \int_0^T 
    \| \vv_N(s)\|_{L^2(\Omega)}^2 
    \| \uu^N(s)\|_{L^2(\Omega)}^2  
    \, \d s
    + C_N \nu^\ast \int_0^T 
    \| \uu^N(s)\|_{L^2(\Omega)}^2  
    \, \d s
    \\
    &\quad + C_N \int_0^T \| \phi^N (s)\|_{L^2(\Omega)}^2 \| \nabla \mu^N(s)\|_{L^2(\Omega)}^2
   + \| \nabla \di^N(s)\|_{L^2(\Omega)}^2
    \| \vect{h}^N(s)\|_{L^2(\Omega)}^2 \, \d s.
\end{split}
\end{equation}
Then, observing from \eqref{ASS-vvN} and \eqref{est:lim:2} that $\| \nabla \di^N\|_{L^\infty(0,T; L^2(\Omega))}
\leq C(\widetilde{K}_1,\widetilde{K}_2)$,  and using \eqref{eq:muhN}, and \eqref{est:4N}, it follows that
\begin{equation}
\label{est:5N}
    \int_0^T \| \partial_t \uu^N(s)\|_{L^2(\Omega)}^2 \, \d s
    \leq C(\widetilde{K}_1, \widetilde{K}_2, \widetilde{K}_3)=:\widetilde{K}_4^2.
\end{equation}
Hence, for any $N \in \N$, we can define the map
\begin{equation}
    \label{eq:def_Z}
    Z_N: \, 
    \mathcal{X}_T \to \mathcal{Y}_T \subset \mathcal{X}_T, \quad 
    \vv_N \mapsto Z_N(\vv_N):= \uu^N,
\end{equation}
where
\begin{equation}
    \label{eq:def_XT}
    \begin{aligned}
    \mathcal{X}_T = \Big\{ \vv \in C([0,T];\vect{V}_N): \int_0^t \norm{\vv(s)}_{L^2(\Omega)}^2 \, \d s 
    \leq \widetilde{K}_1 T\, \mathrm{e}^{\widetilde{K}_2 t} \, \text{  for } t\in [0,T], \text{ and } \| \vv \|_{C([0,T]; L^2(\Omega))}\leq \widetilde{K}_3 \Big\},
    \end{aligned}
\end{equation}
and 
\begin{equation}
    \label{eq:def_YT}
    \begin{aligned}
    \mathcal{Y}_T = \Big\{ \vv \in \mathcal{X}_T\cap W^{1,2}(0,T;\vect{V}_N) : \| \partial_t \vv\|_{L^2(0,T;L^2(\Omega))}\leq \widetilde{K}_4 \Big\}.
    \end{aligned}
\end{equation}
We immediately notice that $\mathcal{X}_T$ is convex and closed in $L^2(0,T; \vect{V}_N)$. In addition, $\mathcal{Y}_T$ is compact in $L^2(0,T; \vect{V}_N)$. We claim that $Z_N$ is continuous in $L^2(0,T; \vect{V}_N)$. For the sake of brevity, we leave the proof to the interested reader. Nonetheless, it can be easily established by using the properties of $\mathcal{X}_T$ and performing a similar argument to the one in Section \ref{sec:uniqueness_2D} (cf. also \cite{giorgini2022existence, giorgini2021well}). 
Therefore, the Schauder fixed point theorem yields that, for any $N \in \N$, there exists $(\uu^N,\phi^N,\di^N, \mu^N,\vect{h}^N)$ on $[0,T]$ such that 
\begin{align}
    \label{NS-CH-LC:reg0}
    &\uu^N \in W^{1,2}(0,T;\vect{V}_N),\\
     \label{NS-CH-LC:reg1}
    & \phi^N \in L^\infty(0,T; W^{1,4}(\Omega))\cap L^2(0,T;H^2(\Omega)),\\
    \label{NS-CH-LC:reg2}
    &\phi^N \in L^\infty(\Omega \times (0,T)) \, \text{with }
    |\phi^N(x,t)|<1 \, \text{a.e. in } \Omega \times (0,T),\\
    \label{NS-CH-LC:reg3}
    &\partial_t \phi^N \in L^2(0,T; H^1(\Omega)'),
    \quad |\nabla \phi^N|^2 \nabla \phi^N \in L^2(0,T;H^1(\Omega)), \quad 
    F'(\phi^N)\in L^2(0,T;L^2(\Omega)),
    \\
    \label{NS-CH-LC:reg4}
    & \di^N \in C([0,T];H^1(\Omega))\cap L^2(0,T;H^2(\Omega)) \cap
    H^1(0,T; L^2(\Omega)),
    \\
     \label{NS-CH-LC:reg5}
    &  \di^N \in L^\infty(\Omega \times (0,T)) \, \text{with }
    |\di^N(x,t)|\leq \mathcal{D}_\infty \, \text{a.e. in } \Omega \times (0,T),
    \\
    \label{NS-CH-LC:reg6}
    &\mu^N \in L^2(0,T;H^1(\Omega)),
    \quad 
    \vect{h}^N \in L^2(0,T; L^2(\Omega)),
    \end{align}
satisfying
\begin{subequations}
\begin{alignat}{2}
\label{NS-CH-LC:wp0}
&\prt{\partial_t \uu^N, \zz} +\prt{(\uu^N \cdot \nabla) \uu^N, \zz} + \prt{\nu(\phi^N) D\uu^N, \nabla \zz} = (\mu^N\nabla \phi^N+(\nabla \di^N)^T \vect{h}^N, \zz),\ &&\forall \, \zz \in \vect{V}_N, \text{ a.e. in }(0,T),\\
\label{NS-CH-LC:wp1}
    & \langle \partial_t \phi^N, \xi \rangle_{H^1(\Omega)}
    - (\phi^N \uu^N, \nabla \xi)
    + (\nabla \mu^N, \nabla \xi)
    =0, \ &&\forall \, \xi \in H^1(\Omega), \text{ a.e. in }(0,T),
    \\
    \label{NS-CH-LC:wp2}
    &\mu^N=
    - \diver \left( (\varepsilon+ \gamma |\nabla \phi^N|^2) \nabla \phi^N \right) 
    + \frac{1}{\varepsilon} \Psi'(\phi^N)
    -\frac{\alpha}{2}  |\di^N|^2 
    - \beta \diver ( (\nabla \phi^N \cdot \di^N) \di^N), \ &&\text{a.e. in } \Omega \times (0,T),
    \\
    \label{NS-CH-LC:wp3}
    &\partial_t \di^N + \prt{\uu^N \cdot \nabla}\di^N = \vect{h}^N, \
    &&\text{a.e. in } \Omega \times (0,T),\\
    \label{NS-CH-LC:wp4}
    &\vect{h}^N = \kappa \Delta \di^N -\alpha \abs{\di^N}^2 \di^N + \alpha(\phi^N -\phi_{\rm cr}) \di^N -\beta \prt{\di^N \cdot \nabla \phi^N}\nabla \phi^N, \ &&
    \text{a.e. in } \Omega \times (0,T),
\end{alignat}
\end{subequations}
and  
\begin{equation}
\label{NS-CH-LC:wp5}
\uu^N(\cdot, 0)=\tens{P}_N\uu_0, \quad
\phi^N(\cdot,0)=\phi_0, \quad \di^N(\cdot, 0)=\di_0 \quad \text{in } \Omega.
\end{equation}
Furthermore, the free energy inequality \eqref{CH-LC:EE} and \eqref{est:lim:0}-\eqref{est:lim:3} holds replacing $\vv_N$ with $\uu^N$. Then, integrating \eqref{EE:NS} in time on $(0,t)$ with $t\leq T$, adding the resulting equation to the free energy inequality \eqref{CH-LC:EE} and recalling that $E_{\rm kin}(\tens{P}_N\uu_0)\leq E_{\rm kin}(\uu_0)$, we obtain
\begin{equation}
\label{NS-CH-LC:EE}
\begin{split}
&E_{\rm kin}(\uu^N(t))+ E^\gamma_{\rm free}(\phi^N(t), \di^N(t)) + 
\int_0^t \norm{ \sqrt{\nu(\phi^N(s))} D \uu^N(s)}_{L^2(\Omega)}^2 \, \d s
\\
&\quad +\int_0^t \norm{\nabla \mu^N(s)}_{L^2(\Omega)}^2 \, \d s+ 
\int_0^t \norm{\vect{h}^N(s)}_{L^2(\Omega)}^2 \, \d s  \leq 
E_{\rm kin}(\uu_0)+
E^\gamma_{\rm free}(\phi_0, \di_0), \quad \forall \, t \in [0,T].
\end{split}
\end{equation}
As  direct consequence, we infer that
\begin{equation}
    \label{NSCHLC:est1}
    \begin{split}
    \norm{\uu^N}_{L^\infty(0,T;\dot{L}_\sigma^2(\Omega))}+
    \norm{\phi^N}_{L^\infty(0,T;W^{1,4}(\Omega))}+
    \norm{\di^N}_{L^\infty(0,T;H^1(\Omega))}
    \leq C_E,\\
    \norm{\uu^N}_{L^2(0,T;\dot{H}_\sigma^1(\Omega))}+ 
    \norm{\nabla \mu^N}_{L^2(0,T;L^2(\Omega))}
    +\norm{ \vect{h}^N}_{L^2(0,T;L^2(\Omega))}
    \leq C_E,
    \end{split}
\end{equation}
where $C_E$ depends on the parameters of the system and $E^\gamma_{\rm tot}(\uu_0,\phi_0,\di_0)$, but it is independent of $N$. Besides, in light of \eqref{NSCHLC:est1}, repeating line by line the computations to get \eqref{est:vv:3}-\eqref{est:vv:13}, we directly deduce that 
\begin{align}
\label{NSCHLC:est2}
&\| \phi^N\|_{L^2(0,T;H^2(\Omega))}
+ \| |\nabla \phi^N|^2 \nabla \phi^N\|_{L^2(0,T;H^1(\Omega))}
+\| \partial_t \phi^N\|_{L^2(0,T;H^1(\Omega)')} \leq C_E^\star,
\\
\label{NSCHLC:est3}
&\| \di^N\|_{L^2(0,T; H^2(\Omega))}
+ \| \partial_t \di^N\|_{L^2(0,T; L^2(\Omega))}\leq C_E^\star,
\\
\label{NSCHLC:est4}
&\| \mu^N\|_{L^2(0,T;H^1(\Omega))}
+\|F'(\phi^N)\|_{L^2(0,T;L^2(\Omega))}
\leq C_E^\star,
\end{align}
where $C_E^\star$ depends on the parameters of the system, the measure of $T$ and $E^\gamma_{\rm tot}(\uu_0,\phi_0,\di_0)$, but is independent of $N$\footnote{In fact, we highlight that the constant $C_2^\star$ in \eqref{est:lim:1}-\eqref{est:lim:3} depends on $m$ (and so $N$ in this step) only through \eqref{EE:vv}. But, the new estimate \eqref{NSCHLC:est1}, which is the analogue of \eqref{EE:vv} in the last step, is now independent of $N$.}.

We now proceed to the estimate of the partial derivative of $\uu^N$. We consider the {\color{black} case $n = 3$}. Let us fix $\ww \in \dot{H}^1_\sigma(\Omega)$ with $\norm{\ww}_{\dot{H}_\sigma^1(\Omega)}\leq 1$. It follows from \eqref{NS-CH-LC:wp0} that 
\begin{equation}
    \begin{split}
    |\langle \partial_t \uu^N, \ww \rangle_{\dot{H}^1_\sigma(\Omega)}|
    &=  \left| \prt{(\uu^N \cdot \nabla) \uu^N, \tens{P}_N \ww}\right| 
    + \left| \prt{\nu(\phi^N) D\uu^N, \nabla \tens{P}_N \ww}\right| + 
    \left| (\mu^N\nabla \phi^N-\nabla \di^N\, \vect{h}^N, \tens{P}_N \ww)\right|
    \\
    &\leq \norm{\uu^N}_{L^3(\Omega)} 
    \norm{\nabla \uu^N}_{L^2(\Omega)}
    \norm{P_N \ww}_{L^6(\Omega)}
    + \nu^\ast  \norm{\nabla \uu^N}_{L^2(\Omega)}
    \norm{ \nabla P_N \ww}_{L^2(\Omega)}
    \\
    & \quad + \norm{\phi^N}_{L^\infty(\Omega)}
    \norm{\nabla \mu^N}_{L^2(\Omega)}
    \norm{\tens{P}_N \ww}_{L^2(\Omega)}
    + \norm{\nabla \di^N}_{L^3(\Omega)}
    \norm{ \hh^N}_{L^2(\Omega)}
    \norm{\tens{P}_N \ww}_{L^6(\Omega)}
    \\
    &\leq C_E \| \nabla \uu^N\|_{L^2(\Omega)}^\frac32 + C \| \nabla \uu^N\|_{L^2(\Omega)} +
    C\| \nabla \mu^N\|_{L^2(\Omega)} 
    + C_E \| \di^N\|_{H^2(\Omega)}^\frac12 
    \| \hh^N\|_{L^2(\Omega)},
    \end{split}
\end{equation}
where the constants are independent of $N$.
Thus, \eqref{NSCHLC:est1}, \eqref{NSCHLC:est3} and \eqref{NSCHLC:est4} entail that 
\begin{equation}
    \label{NSCHLC:est5}
    \norm{\partial_t \uu^N}_{L^\frac43(0,T; \dot{H}^1_\sigma(\Omega)')}\leq C_E^\star.
\end{equation}
The estimate of $\partial_t \uu^N$ is slightly different if $n=2$. In fact, 
the right-hand side of \eqref{NS-CH-LC:wp0} can be rewritten by exploiting \eqref{RHS-NS}. Then, integrating by parts in the terms on the right-hand side, and using \eqref{LADY} and \eqref{NSCHLC:est1}-\eqref{NSCHLC:est4}, it is possible to show that $\norm{\partial_t \uu^N}_{L^2(0,T; \dot{H}^1_\sigma(\Omega)')}\leq C_E^\star$.

Thanks to the above uniform estimates with respect to the parameter $N$, we are in the position to apply compactness arguments to deduce the existence of a limit quintet $(\uu,\phi,\di,\mu, \hh)$ which satisfies \eqref{W:reg}, \eqref{W:weak:u}-\eqref{W:weak:d} and \eqref{MR:en:in} on $[0,T]$ (see e.g. \cite{giorgini2021well, giorgini2022existence}). Finally, by the arbitrariness of $T$, a continuation argument (see, e.g., \cite[Chapter V, Sec. 1.3.6]{boyer2012mathematical}) allows to extend the solution on $[0,\infty)$ fulfilling \eqref{W:weak:u}-\eqref{W:weak:d} and \eqref{MR:en:in}. The regularity in \eqref{W:reg}, except for the properties $\phi \in L^4_{\rm uloc}([0,\infty); H^2(\Omega))$ and $\Psi'(\phi)\in L_{\rm uloc}^2([0,\infty); L^p(\Omega))$, follows from the combination of \eqref{MR:en:in} and the estimates 
\eqref{est:vv:3}-\eqref{est:vv:13} replacing $(\vv,\phi^\delta,\di^\delta,\mu^\delta,\hh^\delta)$ with $(\uu,\phi,\di,\mu,\hh)$.

\medskip 

\noindent
\textbf{Step 4. Further regularity properties of the concentration.}
We improve the regularity of $\phi$ in the spirit of \cite{giorgini2018cahn}. Let us multiply \eqref{eq:def_mu} by $-\Delta \phi$ and integrate over $\Omega$. By using \eqref{gamma:term}, \eqref{beta:term}, \eqref{est:vv:8}, and \eqref{est:vv:10},
we obtain
\begin{equation}
\label{FR:1}
\begin{split}
& 
\frac{\varepsilon}{2} \norm{\Delta \phi}_{L^2(\Omega)}^2 + \gamma
\int_{\Omega} \abs{D^2 \phi}^2 \abs{\nabla \phi}^2 \, \d x
+\frac{\gamma}{4} \int_\Omega \left| \nabla\left(\abs{\nabla \phi}^2\right) \right|^2 \, \d x
+ \beta \int_\Omega |\nabla(\nabla \phi \cdot \di)|^2 \, \d x
\\
&\leq 
\norm{\nabla \mu}_{L^2(\Omega)}
\norm{\nabla \phi}_{L^2(\Omega)}
+ \frac{\alpha^2 C(n)}{4\varepsilon}\mathcal{D}_\infty^4+\frac{1}{\varepsilon} \left( \frac{\Theta_0}{\varepsilon}+\frac12\right)^2 C(n)\\
&\quad +
\beta \left| \int_\Omega \nabla(\nabla \phi \cdot \di) \cdot (\nabla (\di)^T \nabla \phi) \, \d x    +  \int_\Omega (\nabla \phi \cdot \di) \nabla \di : D^2 \phi \, \d x\right|.
\end{split}
\end{equation}
Now, by \eqref{GN} and \eqref{MR:en:in}, we have
\begin{equation}
\label{FR:2}
\begin{split}
&\beta \left| \int_\Omega \nabla(\nabla \phi \cdot \di) \cdot (\nabla (\di)^T \nabla \phi) \, \d x +  \int_\Omega (\nabla \phi \cdot \di) \nabla \di : D^2 \phi \, \d x\right|
 \\   
&= \beta \left| \sum_{i,j=1}^n \int_\Omega 
-\partial_l \partial_i \phi \di_i \partial_l \di_j \partial_j \phi
-\partial_i \phi \partial_l \di_i \partial_l \di_j \partial_j \phi 
+ \partial_i \phi \di_i \partial_l \di_j \partial_l \partial_j \phi 
\, \d x 
\right|
\\
&\leq 
\frac{\gamma}{2} \int_\Omega |D^2\phi|^2 |\nabla \phi|^2 \, \d x
+ \frac{2\beta^2 C(n)}{\gamma}\mathcal{D}_\infty^2 \|\nabla \di \|_{L^2(\Omega)}^2
+\beta
\| \nabla \di\|_{L^4(\Omega)}^2 \| \nabla \phi\|_{L^4(\Omega)}^2
\\
&\leq 
\frac{\gamma}{2} \int_\Omega |D^2\phi|^2 |\nabla \phi|^2 \, \d x
+ C \| \di \|_{H^2(\Omega)},
\end{split}
\end{equation}
where $C$ is a positive constant depending on $\beta, \gamma, n, \mathcal{D}_\infty$ and $E^\gamma_{\rm tot}(\uu_0,\phi_0,\di_0)$.
Combining \eqref{FR:1} and \eqref{FR:2}, we arrive at
\begin{equation}
\label{FR:3}
\begin{split}
& 
\frac{\varepsilon}{2} \norm{\Delta \phi}_{L^2(\Omega)}^2 + \frac{\gamma}{2} 
\int_{\Omega} \abs{D^2 \phi}^2 \abs{\nabla \phi}^2 \, \d x
+\frac{\gamma}{4} \int_\Omega \left| \nabla\left(\abs{\nabla \phi}^2\right) \right|^2 \, \d x
\leq 
C\norm{\nabla \mu}_{L^2(\Omega)}
+ C \| \di\|_{H^2(\Omega)}+C,
\end{split}
\end{equation}
for some positive constant $C$ depending on $\alpha, \beta, \gamma, \varepsilon, n, \mathcal{D}_\infty$ and $E^\gamma_{\rm tot}(\uu_0,\phi_0,\di_0)$.
Thanks to $\mu \in L^2_{\rm uloc}([0,\infty);H^1(\Omega))$ and $\di \in L^2_{\rm uloc}([0,\infty);H^2(\Omega))$, we deduce that
$$
\phi \in L_{\rm uloc}^4([0,\infty);H^2(\Omega)),
\quad
|\nabla \phi|^2 \in L_{\rm uloc}^4([0,\infty);H^1(\Omega)).
$$

Next, testing \eqref{eq:def_mu} by $|F_k'(\phi)|^{p-2}F_k'(\phi)$ (cf. \eqref{trunc_F}) and integrate over $\Omega$, we find
\begin{equation}
\label{F_k}
\begin{split}
\int_{\Omega} (p-1) &|F_k'(\phi)|^{p-2} F_k''(\phi) \left(\varepsilon |\nabla\phi|^2 + \gamma |\nabla \phi|^4) + \beta |\nabla \phi \cdot \di|^2 \right) \, \d x+
\int_{\Omega} F'(\phi)|F_k'(\phi)|^{p-2}F_k'(\phi) \, \d x\\
&=
\int_{\Omega} \mu \, |F_k'(\phi)|^{p-2}F_k'(\phi)\, \d x
+ \Theta_0 \int_{\Omega} \phi \, |F_k'(\phi)|^{p-2}F_k'(\phi) \, \d x + \frac{\alpha}{2} \int_{\Omega} |\di|^2 \, |F_k'(\phi)|^{p-2}F_k'(\phi)\, \d x.
\end{split}
\end{equation}
Then, exploiting the sign of $F'$ and the global boundedness in $L^\infty$ of $\phi$ and $\di$, we are led to
$$
\sup_{t\geq 0}\int_t^{t+1} \| F_k'(\phi(s))\|_{L^p(\Omega)}^2 \, \d s
\leq C \sup_{t\geq 0} \int_t^{t+1}\| \mu(s)\|_{L^p(\Omega)}^2 \, \d s+ C
\leq C \sup_{t\geq 0} \int_t^{t+1}\| \mu(s)\|_{H^1(\Omega)}^2 \, \d s+ C,
$$
for $p=6$ if $n=3$ and any $p\in [2,\infty)$ if $n=2$. Since $\mu \in L^2_{\rm uloc}([0,\infty);H^1(\Omega))$, by passing to the limit as $k\to \infty$, we conclude that $F'(\phi)\in L^2_{\rm uloc}([0,\infty); L^p(\Omega))$ for $p$ as above.
\end{proof}

\subsection{Global existence of weak solutions: the limit \texorpdfstring{$\gamma \to 0$}{γ->0}}
\label{sec:gamma0}

We now conclude the proof of the first part of Theorem \ref{MR} by studying the limit as $\gamma \to 0$ for weak solutions to \eqref{eq:NSE}-\eqref{eq:def_h}. 

\begin{proof}[Proof of Theorem \ref{MR} - Existence of weak solutions.]
Let us consider $\phi_0 \in H^1(\Omega)$ such that $\| \phi_0\|_{L^\infty(\Omega)}\leq 1$ and $|\overline{\phi_0}|<1$. 
For any $\gamma \in (0,1]$, let us define $\phi_0^\gamma$ as the solution to the elliptic problem $- \sqrt{\gamma} \Delta \phi_0^\gamma+
\phi_0^\gamma=\phi_0$ with periodic boundary conditions. This means that the Fourier coefficients are 
$(\widehat{\phi_0^\gamma})_k= (1+\sqrt{\gamma}|k|^2)^{-1}(\widehat{\phi_0})_k$ for any $k \in \mathbb{Z}^n$. We end up with $\phi_0^\gamma \in H^2(\Omega)$ such that
\begin{equation}
\label{phi0gamma}
\overline{\phi_0^\gamma}=\overline{\phi_0}, \quad
\| \phi_0^\gamma\|_{L^p(\Omega)}\leq 
\| \phi_0\|_{L^p(\Omega)} \ \forall \, p \in [2,\infty], \quad
\| \nabla \phi_0^\gamma\|_{L^2(\Omega)}
\leq \| \nabla \phi_0\|_{L^2(\Omega)}, \quad
\gamma^\frac14 \| \Delta \phi_0^\gamma\|_{L^2(\Omega)}
\leq \| \nabla \phi_0\|_{L^2(\Omega)}
\end{equation}
In addition, $\phi_0^\gamma \rightarrow \phi_0$ in $H^1(\Omega)$. By \eqref{GN} and \eqref{phi0gamma}, we have
\begin{equation}
\label{phi0gamma-2} 
\frac{\gamma}{4}\| \nabla \phi_0^\gamma\|_{L^4(\Omega)}^4
=
\frac{\gamma}{4} \left\| \nabla \left(\phi_0^\gamma-\overline{\phi_0^\gamma}\right) \right\|_{L^4(\Omega)}^4
\leq \frac{\gamma}{4} C_\Omega \|\phi_0^\gamma-\overline{\phi_0^\gamma} \|_{L^\infty(\Omega)}^2
\|\Delta\phi_0^\gamma\|_{L^2(\Omega)}^2
\leq \sqrt{\gamma} C(\| \phi_0\|_{H^1(\Omega)}, \Omega).
\end{equation}
Now, for any $\gamma \in (0,1]$, Theorem \ref{MR:gamma-p} guarantees the existence of a sequence $(\uu^\gamma,\phi^\gamma, \mu^\gamma, \di^\gamma, \hh^\gamma)$ of global weak solutions to \eqref{eq:NSE}-\eqref{eq:def_h} 
satisfying the regularity properties in \eqref{W:reg}, the weak formulation \eqref{W:weak:u}-\eqref{W:weak:d} and the initial conditions 
$\uu(\cdot, 0)=\uu_0(\cdot)$, $\phi(\cdot, 0)=\phi_0^\gamma(\cdot)$, $\di(\cdot,0)=\di_0(\cdot)$ in $\Omega$, as well as the the energy inequality \eqref{MR:en:in}. 
We now carry out uniform estimates with respect to the parameter $\gamma$.
First, we have
\begin{equation*}    
    \begin{split}
    &\sup_{t\geq 0} \int_\Omega \left(
    \frac{1}{2} \abs{\vect{u}^\gamma(t)}^2 +
    \frac{\gamma}{4}\abs{\nabla \phi^\gamma(t)}^4 +\frac{\varepsilon}{2} \abs{\nabla \phi^\gamma(t)}^2 + \frac{1}{\varepsilon}\Psi(\phi^\gamma(t)) -\frac{\alpha}{2}(\phi^\gamma(t)-\phi_{\rm cr}) \abs{\vect{d}^\gamma(t)}^2 
    + \frac{\alpha}{4} \abs{\vect{d}^\gamma (t)}^4 + \frac{\kappa}{2}\abs{\nabla \vect{d}^\gamma(t)}^2 \right. 
    \\
    &\qquad \quad \left.   + \frac{\beta}{2} \abs{\nabla \phi^\gamma(t) \cdot \vect{d}^\gamma(t)}^2 -E_0 \right)
    \, \d x
    +
    \int_0^\infty \left\| \sqrt{\nu(\phi^\gamma(s))} D\uu^\gamma(s)\right\|_{L^2(\Omega)}^2 
        +\| \nabla \mu^\gamma(s)\|_{L^2(\Omega)}^2
        +\| \vect{h}^\gamma (s) \|_{L^2(\Omega)}^2 
        \, \d s
    \\
    & \leq 
    \int_\Omega \left( \frac{1}{2}  \abs{\vect{u}_0}^2 
    +\frac{\gamma}{4}\abs{\nabla \phi_0^\gamma}^4 +\frac{\varepsilon}{2} \abs{\nabla \phi_0^\gamma}^2 + \frac{1}{\varepsilon}\Psi(\phi_0^\gamma) -\frac{\alpha}{2}(\phi_0^\gamma-\phi_{\rm cr}) \abs{\vect{d}_0}^2 + \frac{\alpha}{4} \abs{\vect{d}_0}^4 + \frac{\kappa}{2}\abs{\nabla \vect{d}_0}^2 + \frac{\beta}{2} \abs{\nabla \phi_0^\gamma \cdot \vect{d}_0}^2 -E_0 \right)  \, \d x.
    \end{split}
\end{equation*}
Since $E_{\rm tot}^\gamma(\uu_0,\phi_0,\di_0)=
E_{\rm tot}(\uu_0,\phi_0,\di_0) + \frac{\gamma}{4}\| \nabla \phi_0\|_{L^4(\Omega)}^4$, we infer from the construction of $\phi_0^\gamma$ and \eqref{phi0gamma-2} that, for any $\varrho>0$, there exists $\gamma_0>0$ (possibly small) such that (up to a subsequence in $\phi_0^\gamma$)
\begin{equation}
\label{diff-ener}
\left| E_{\rm tot}^\gamma(\uu_0,\phi_0^\gamma,\di_0) 
-E_{\rm tot}(\uu_0,\phi_0,\di_0) \right| \leq \varrho, \quad \forall \, \gamma \in (0,\gamma_0).
\end{equation}
Let $\varrho=1$. Recalling that 
\begin{equation}
    \label{E:g_z:1}
    \| \phi^\gamma\|_{L^\infty(\Omega \times (0,\infty))}\leq 1 \qquad \text{and} \qquad
    \| \di^\gamma\|_{L^\infty(\Omega \times (0,\infty))}\leq \mathcal{D}_\infty,
\end{equation}
we deduce that
\begin{equation}
\label{E:g_z:2}
\begin{aligned}
&
\| \uu^\gamma\|_{L^\infty(0,\infty; \dot{L}^2(\Omega))}
+
\|\phi^\gamma\|_{L^\infty(0,\infty;H^1(\Omega))}
+ \gamma^\frac14 \|\phi^\gamma\|_{L^\infty(0,\infty;W^{1,4}(\Omega))}
+ \| \di^\gamma\|_{L^\infty(0,\infty; H^1(\Omega))}\leq  K_0,
\\
& 
\| \uu^\gamma\|_{L^2(0,\infty; \dot{H}^1(\Omega))}
+ \| \nabla\mu^\gamma\|_{L^2(0,\infty;L^2(\Omega))}
+ \|\vect{h}^\gamma\|_{L^2(0,\infty;L^2(\Omega))}
\leq K_0,
\end{aligned}
\end{equation}
where $K_0$ is independent of $\gamma$.
Next, we perform the key elliptic type estimates for the CH-EL subsystem, which will provide the necessary compactness to pass to the limit. Multiplying 
\eqref{W:weak:mu} and \eqref{W:weak:d} by $-\Delta \phi^\gamma$ and $\Delta \di^\gamma$, respectively, integrating over $\Omega$ and summing the resulting equations, we find
\begin{equation}
\label{E:g_z:3}
\begin{split}
&
\varepsilon\norm{\Delta \phi^\gamma}_{L^2(\Omega)}^2 
+ \kappa \norm{\Delta \di^\gamma}_{L^2(\Omega)}^2
+ \gamma \int_\Omega \diver \left( |\nabla \phi^\gamma |^2 \nabla \phi^\gamma \right) \Delta \phi^\gamma \, \d x - \frac{1}{\varepsilon} \int_\Omega F'(\phi^\gamma) \Delta \phi^\gamma \, \d x
- \alpha \int_\Omega |\di^\gamma|^2 \di^\gamma \cdot \Delta \di^\gamma \, \d x
\\
&
+
\beta \int_\Omega \diver\prt{(\nabla \phi^\gamma \cdot \di^\gamma)\di^\gamma} \Delta \phi^\gamma \, \d x
- \beta \int_\Omega (\di^\gamma \cdot \nabla \phi^\gamma) \nabla \phi^\gamma \cdot \Delta \di^\gamma \, \d x= \int_\Omega \nabla \mu^\gamma \cdot \nabla \phi^\gamma \, \d x 
+ \frac{\Theta_0}{\varepsilon} 
\int_{\Omega} |\nabla \phi^\gamma|^2 \, \d x\\
&\quad+\frac{\alpha}{2}\int_\Omega \Delta \phi^\gamma \abs{\di^\gamma}^2\, \d x 
+\int_\Omega \hh^\gamma \cdot \Delta \di^\gamma \, \d x
-\alpha \int_\Omega (\phi^\gamma-\phi_{\rm cr}) \di^\gamma \cdot \Delta \di^\gamma \, \d x
.
\end{split}
\end{equation}
It is clear that the third to the fifth terms on the left-hand side in \eqref{E:g_z:3} are non-negative (cf. \eqref{F2:pos} and \eqref{gamma:term}).
Concerning the right-hand side, it is easily seen from \eqref{E:g_z:1} and \eqref{E:g_z:2} that
\begin{equation}
\begin{split}
&\int_\Omega \nabla \mu^\gamma \cdot \nabla \phi^\gamma \, \d x 
+ \frac{\Theta_0}{\varepsilon} 
\int_{\Omega} |\nabla \phi^\gamma|^2 \, \d x
+\frac{\alpha}{2}\int_\Omega \Delta \phi^\gamma \abs{\di^\gamma}^2\, \d x 
+\int_\Omega \hh^\gamma \cdot \Delta \di^\gamma \, \d x
-\alpha \int_\Omega (\phi^\gamma-\phi_{\rm cr}) \di^\gamma \cdot \Delta \di^\gamma \, \d x
\\
&\quad 
\leq 
\frac{\varepsilon}{2} \| \Delta \phi^\gamma\|_{L^2(\Omega)}^2 
+ 
\frac{\kappa}{2} \| \Delta \di^\gamma\|_{L^2(\Omega)}^2 
+
K_0 \| \nabla \mu^\gamma\|_{L^2(\Omega)}
+ \frac{1}{2\kappa} \| \hh^\gamma\|_{L^2(\Omega)}^2
+C(\alpha, \varepsilon, \kappa, \Theta_0, \mathcal{D}_\infty, K_0, |\Omega|).
\end{split}
\end{equation}
Besides, we observe that
\begin{equation}
\label{crucial}
    \begin{split}
        &\beta \int_\Omega \diver\prt{(\nabla \phi^\gamma \cdot \di^\gamma)\di^\gamma} \Delta \phi^\gamma \, \d x
        - \beta \int_\Omega (\di^\gamma \cdot \nabla \phi^\gamma) \nabla \phi^\gamma \cdot \Delta \di^\gamma \, \d x
        \\
        &= \beta \int_\Omega \partial_k \left( (\nabla \phi^\gamma\cdot \di^\gamma) \di_j^\gamma \right) \partial_k \partial_j \phi^\gamma \, \d x
        + \beta \int_\Omega \partial_k\left( (\nabla \phi^\gamma\cdot \di^\gamma) \partial_j \phi^\gamma \right) \partial_k \di_j^\gamma \, \d x
        \\
        &= \beta \int_\Omega \partial_k (\nabla \phi^\gamma\cdot \di^\gamma) \di^\gamma_j \partial_k \partial_j \phi^\gamma \, \d x+ \beta \int_\Omega
        (\nabla \phi^\gamma\cdot \di^\gamma) \partial_k \di^\gamma_j \partial_k \partial_j \phi^\gamma \, \d x 
        + \beta \int_\Omega \partial_k(\nabla \phi^\gamma \cdot \di^\gamma) \partial_j \phi^\gamma \partial_k \di^\gamma_j \, \d x
        \\
        &\quad +
        \beta \int_\Omega (\nabla \phi^\gamma \cdot \di^\gamma) \partial_k \partial_j \phi^\gamma \partial_k \di^\gamma_j \, \d x 
        \\
        &=\beta \int_\Omega |\nabla (\nabla \phi^\gamma\cdot \di^\gamma)|^2 \, \d x + 2 \beta \int_\Omega (\nabla \phi^\gamma \cdot \di^\gamma) \nabla \di^\gamma: D^2 \phi^\gamma \, \d x. 
    \end{split}
\end{equation}
It is important to notice that the first term, which is troublesome due to the product of the two terms with double derivatives on $\phi^\gamma$, is rewritten as a positive term and a lower order term in terms of derivative acting on $\phi^\gamma$. {\color{black} In this step, the periodic boundary conditions plays a crucial role to avoid boundary integral terms.}
Let $n=3$. By \eqref{GN} and \eqref{E:g_z:1}, we find
\begin{equation}
\label{E:g_z:4}
    \begin{split}
        2 \beta \left| \int_\Omega (\nabla \phi^\gamma \cdot \di^\gamma) \nabla \di^\gamma: D^2 \phi^\gamma \, \d x \right| 
        &\leq 2\beta \| \nabla \phi^\gamma\|_{L^4(\Omega)} \| \di^\gamma\|_{L^\infty(\Omega)} \| \nabla \di^\gamma\|_{L^4(\Omega)} \| D^2 \phi^\gamma\|_{L^2(\Omega)}
        \\
        & \leq 2 \beta C_\Omega^2 \mathcal{D}_\infty^\frac32 \| \phi^\gamma\|_{H^2(\Omega)}^\frac32 \| \di^\gamma \|_{H^2(\Omega)}^\frac12 
        \\
        &\leq 2 \beta C_\Omega^2 \mathcal{D}_\infty^\frac32 
        \left( \frac{3^\frac34}{4} \| \phi^\gamma\|_{H^2(\Omega)}^2 +
        \frac{3^\frac34}{4} \| \di^\gamma\|_{H^2(\Omega)}^2 \right)
        \\
        &\leq \frac{3^\frac34}{2}\beta C_\Omega^2 \mathcal{D}_\infty^\frac32 
        \left( \| \Delta \phi^\gamma\|_{L^2(\Omega)}^2 +
         \| \Delta \di^\gamma\|_{L^2(\Omega)}^2 \right)
        +C(\beta,\Omega, \mathcal{D}_\infty).
    \end{split}
\end{equation}
Thus, we end up with 
\begin{equation}
\label{E:g_z:5}
\begin{split}
&
\frac12 \left( \min \lbrace \varepsilon, \kappa \rbrace
-3^\frac34\beta C_\Omega^2 \mathcal{D}_\infty^\frac32 \right)
\left( \norm{\Delta \phi^\gamma}_{L^2(\Omega)}^2 
+ \norm{\Delta \di^\gamma}_{L^2(\Omega)}^2
\right)
+ \gamma \int_{\Omega} \abs{D^2 \phi^\gamma}^2 \abs{\nabla \phi^\gamma}^2 + \frac{1}{4} \left| \nabla\left(\abs{\nabla \phi^\gamma}^2\right) \right|^2 \, \d x
\\
&\quad \leq K_0 \| \nabla \mu^\gamma\|_{L^2(\Omega)}
+ \frac{1}{2\kappa} \| \hh^\gamma\|_{L^2(\Omega)}^2
+C(\alpha, \varepsilon, \kappa, \Theta_0, \mathcal{D}_\infty, K_0, |\Omega|).
\end{split}
\end{equation}
Let $n=2$. It is clear that \eqref{E:g_z:4} holds in this case as well. On the other hand, by exploiting \eqref{LADY} and \eqref{E:g_z:1}, we also have
\begin{equation}
\label{E:g_z:6}
    \begin{split}
        2 \beta \left| \int_\Omega (\nabla \phi^\gamma \cdot \di^\gamma) \nabla \di^\gamma: D^2 \phi^\gamma \, \d x \right| 
        &\leq 2\beta \| \nabla \phi^\gamma\|_{L^4(\Omega)} \| \di^\gamma\|_{L^\infty(\Omega)} \| \nabla \di^\gamma\|_{L^4(\Omega)} \| D^2 \phi^\gamma\|_{L^2(\Omega)}
        \\
        & \leq 2 \beta \widetilde{C_\Omega}^2 \mathcal{D}_\infty 
        \| \nabla \phi^\gamma\|_{L^2(\Omega)}^\frac12 
        \| \phi^\gamma\|_{H^2(\Omega)}^\frac32 
        \| \nabla \di^\gamma\|_{L^2(\Omega)}^\frac12
        \| \di^\gamma \|_{H^2(\Omega)}^\frac12
        \\
        & \leq \frac{3^\frac34}{2} \beta \widetilde{C_\Omega}^2 \mathcal{D}_\infty 
        \frac{\left(E_{\rm tot}^\gamma(\uu, \phi, \di) -(2\pi)^n E_0\right)^\frac12}{\varepsilon^\frac14\kappa^\frac14}
        \left( 
        \|\phi^\gamma \|_{H^2(\Omega)}^2
        +\|\di^\gamma \|_{H^2(\Omega)}^2
        \right)
        \\
        & \leq \frac{3^\frac34}{2} \beta \widetilde{C_\Omega}^2 \mathcal{D}_\infty 
        \frac{\left(E_{\rm tot}^\gamma(\uu_0, \phi_0^\gamma, \di_0) -(2\pi)^n E_0 \right)^\frac12}{\varepsilon^\frac14\kappa^\frac14}
        \left( 
        \|\phi^\gamma \|_{H^2(\Omega)}^2
        +\|\di^\gamma \|_{H^2(\Omega)}^2
        \right)
        \\
        & \leq \frac{3^\frac34}{2} \beta \widetilde{C_\Omega}^2 \mathcal{D}_\infty 
        \frac{\left( E_{\rm tot}^\gamma(\uu_0, \phi_0^\gamma, \di_0)-(2\pi)^nE_0 \right)^\frac12}{\varepsilon^\frac14\kappa^\frac14}
        \left( 
        \| \Delta \phi^\gamma \|_{L^2(\Omega)}^2
        +\|\Delta \di^\gamma \|_{L^2(\Omega)}^2
        \right)
        \\
        &\quad 
        + C(\beta, \varepsilon, \kappa, \mathcal{D}_\infty, K_0, \Omega).
    \end{split}
\end{equation}
As a consequence, for a general $\varrho<1$ and $\gamma \in (0,\gamma_0)$, we deduce that
\begin{equation}
\label{E:g_z:7}
\begin{split}
&
\frac12 \left( \min \lbrace \varepsilon, \kappa \rbrace
-3^\frac34\beta \widetilde{C_\Omega}^2 \mathcal{D}_\infty 
        \frac{\left( E_{\rm tot}(\uu_0, \phi_0, \di_0) -(2\pi)^n E_0 +\varrho\right)^\frac12}{\varepsilon^\frac14\kappa^\frac14} \right)
\left( \norm{\Delta \phi^\gamma}_{L^2(\Omega)}^2 
+ \norm{\Delta \di^\gamma}_{L^2(\Omega)}^2
\right)
\\
&\quad 
+ \gamma \int_{\Omega} \abs{D^2 \phi^\gamma}^2 \abs{\nabla \phi^\gamma}^2 \, \d x
+ \frac{\gamma}{4} \int_{\Omega}\left| \nabla\left(\abs{\nabla \phi^\gamma}^2\right) \right|^2 \, \d x
\\
& \qquad 
\leq K_0 \| \nabla \mu^\gamma\|_{L^2(\Omega)} 
+ \frac{1}{2\kappa} \| \hh^\gamma\|_{L^2(\Omega)}^2
+C(\alpha, \varepsilon, \kappa, \Theta_0, \mathcal{D}_\infty, K_0, |\Omega|).
\end{split}
\end{equation}
Owing to the above estimates \eqref{E:g_z:5} and \eqref{E:g_z:7}, the main assumptions
in \eqref{Key-ass} entail that
\begin{equation}
\label{E:g_z:8}
    \| \phi^\gamma\|_{L_{\rm uloc}^2([0,\infty); H^2(\Omega))}
    + \| \di^\gamma\|_{L_{\rm uloc}^2([0,\infty); H^2(\Omega))}
    +\sqrt{\gamma} 
    \| |\nabla\phi^\gamma|^2 \|_{L_{\rm uloc}^2([0,\infty);H^1(\Omega))}
    \leq K_1   
\end{equation}
for any $\gamma \in (0,\gamma_0)$ ($\gamma_0$ chosen sufficiently small such that \eqref{diff-ener} holds),
where $K_1=C(\alpha, \varepsilon, \kappa, \Theta_0, \mathcal{D}_\infty, K_0, |\Omega|)$ and it is independent on $\gamma$.
Arguing similarly as in the proof of Theorem \ref{MR:gamma-p}, it follows that
\begin{align}
\label{E:g_z:9}
&\| \textcolor{black}{\mu^\gamma} \|_{L_{\rm uloc}^2([0,\infty); H^1(\Omega))}
    + 
    \| F'(\textcolor{black}{\phi^\gamma}) \|_{L_{\rm uloc}^2([0,\infty); L^2(\Omega))}
    \leq K_2,
\\
\label{E:g_z:10}
    &\| \partial_t \uu^\gamma \|_{L_{\rm uloc}^\frac{4}{n}([0,\infty); \dot{H}^1_\sigma(\Omega)')}
    + 
    \| \partial_t \phi^\gamma \|_{L_{\rm uloc}^2([0,\infty); H^1(\Omega)')}
    +
    \| \partial_t \di^\gamma \|_{L_{\rm uloc}^2([0,\infty); L^2(\Omega))}
    \leq K_2,
\end{align}
where $K_2=C(\alpha, \varepsilon, \kappa, \Theta_0, \mathcal{D}_\infty, K_0, |\Omega|)$. If $n=2$, one can show as in the previous proof that
$\norm{\partial_t \uu^N}_{L^2(0,T; \dot{H}^1_\sigma(\Omega)')}\leq K_2$.
Thanks to the estimates \eqref{E:g_z:1}, \eqref{E:g_z:2}, \eqref{E:g_z:8}, \eqref{E:g_z:9} and \eqref{E:g_z:10}, classical compactness and diagonalization arguments yield that there exists a limit quintet $(\uu,\phi,\mu,\di,\hh)$ satisfying \eqref{W:regularity}, \eqref{W:weak-s:u}-\eqref{W:weak-s:d} and \eqref{MR:energy:in}.
We leave this part to the reader since it follows from the lines of the proof of Theorem \ref{MR:gamma-p} (see also, e.g., \cite[Proof of Theorem 2.1]{giorgini2018cahn} or \cite[Proof of Theorem 1.1]{giorgini2022existence}).
\end{proof}

\section{Uniqueness of weak solutions in two dimensions}
\label{sec:uniqueness_2D}

\begin{proof}[Proof of Theorem \ref{MR} - Uniqueness of weak solutions if $n=2$.]
Let $(\uu_1, \phi_1,\di_1, \mu_1, \hh_1)$ and $(\uu_2, \phi_2,\di_2, \mu_2, \hh_2)$ be two global weak solutions as in Theorem \ref{MR} originating from the initial data 
$(\uu_0^1,\phi_0^1, \di_0^1)$ and $(\uu_0^2,\phi_0^2,\di_0^2)$, respectively. Without loss of generality, we assume that $\overline{\phi_0^1}=\overline{\phi_0^2}$.  We define 
$\UU= \uu_1-\uu_2$, $\Phi= \phi_1-\phi_2$ and  
$\DD= \di_1-\di_2$. Notice that, since the total mass of the concentration is preserved by the flow, we have that $\overline{\Phi}(t)=0$ for all $t \in [0,\infty)$.
By exploiting \eqref{USEFUL}, they solve
\begin{align}\label{eq:NSE:uni}
    &\langle\partial_t \UU, \vv \rangle_{H^1_\sigma(\Omega)}  + 
     \left(\uu_1 \cdot \nabla \UU, \vv \right) 
     + \left( \UU \cdot \nabla \uu_2, \vv\right) +\left(\nu(\phi_1) D \UU, \nabla \vv\right) +\left( (\nu(\phi_1)-\nu(\phi_2)) D \uu_2, \nabla \vv \right)\\
     \nonumber
    &=\varepsilon\left(\nabla \phi_1 \otimes\nabla \Phi + \nabla \Phi \otimes\nabla \phi_2, \nabla \vv \right)
     +\kappa \left(\nabla \di_1 \odot \nabla \DD + \nabla \DD \odot \nabla \di_2, \nabla \vv \right) \\
     \nonumber
     &\quad +\beta 
     \left((\di_1\cdot \nabla \phi_1) \nabla \phi_1 \otimes\di_1 -(\di_2\cdot \nabla \phi_2) \nabla \phi_2 \otimes\di_2, \nabla \vv \right),\\
      \label{eq:CH:uni}
     &\langle \partial_t \Phi, v \rangle_{H^1(\Omega)} + 
    \left(\uu_1 \cdot \nabla \Phi, v\right) + 
    \left( \UU \cdot \nabla \phi_2,v \right) \\
    \nonumber
    &= -\left( \nabla \left( -\varepsilon \Delta \Phi+  
     \frac{1}{\varepsilon}\left(\Psi'(\phi_1)-\Psi'(\phi_2)\right) 
     -\frac{\alpha}{2} \left(\abs{\vect{d}_1}^2 -\abs{\di_2}^2\right)-\beta \diver\left((\nabla \phi_1\cdot \di_1)\di_1 -(\nabla \phi_2 \cdot \di_2)\di_2\right)
     \right), \nabla v \right),
\end{align}    
for any $\vv \in H^1_\sigma(\Omega)$, $v \in H^1(\Omega)$, respectively, and for almost every $t \in (0,\infty)$, as well as
 \begin{equation}
 \label{eq:d:uni}
     \begin{aligned}
&\partial_t \DD + 
\uu_1 \cdot \nabla \DD + 
\UU \cdot \nabla \di_2 - 
\kappa \Delta \DD 
+ \alpha \left(\abs{\di_1}^2 \di_1- \abs{\di_2}^2 \di_2\right) \\
&\quad -\alpha \left((\phi_1-\phi_{\rm cr}) \di_1 -(\phi_2-\phi_{\rm cr})\di_2\right)+ \beta (\di_1\cdot \nabla \phi_1)\nabla \phi_1 - \beta(\di_2\cdot \nabla \phi_2)\nabla \phi_2=0
\end{aligned}
 \end{equation}
 almost everywhere in $\Omega \times (0,\infty)$.
Testing \eqref{eq:NSE:uni} by $\mathbf{A}^{-1}\UU$, where $\mathbf{A}$ is the Stokes operator (see, e.g., \cite[Section 2.2]{temambook}, we obtain
$$
\begin{aligned}
\frac{1}{2}\dt\norm{\UU}_{\sharp}^2 
+ \nu_{*} \norm{\UU}_{L^2(\Omega)}^2
& \leq -\left( \left(\nu(\phi_1) -\nu(\phi_2)\right) D\uu_2, \nabla \mathbf{A}^{-1}\UU\right)
+\left(\uu_1\otimes \UU + \UU\otimes \uu_2, \nabla \mathbf{A}^{-1}\UU\right)
\\
&\quad 
+\varepsilon\left(\nabla \phi_1 \otimes\nabla \phi + \nabla \phi \otimes\nabla \phi_2, \nabla \mathbf{A}^{-1}\UU\right)+ \kappa \left(\nabla \di_1 \odot \nabla \DD + \nabla \DD \odot \nabla \di_2, \nabla \mathbf{A}^{-1}\UU\right)\\
&\quad + \beta \left( \left(\di_1 \cdot \nabla \phi_1\right)\nabla \phi_1 \otimes \di_1 - \left( \di_2 \cdot \nabla \phi_2\right) \nabla \phi_2 \otimes \di_2,\nabla \mathbf{A}^{-1}\UU\right) + \left(\UU, \nu'(\phi_1) D\mathbf{A}^{-1}\UU\nabla \phi_1\right),
\end{aligned}
$$
where $\| \UU \|_{\sharp}=
\| \nabla \mathbf{A}^{-1} \UU\|_{L^2(\Omega)}$.
Here, we have used that
\begin{align*}
\label{eq:term_V}
\left( \nu(\phi_1)D\UU, \nabla \mathbf{A}^{-1}\UU \right) &= -\left(\UU, \nu'(\phi_1) D\mathbf{A}^{-1}\UU\nabla \phi_1\right)- \frac{1}{2} \left(\UU, \nu(\phi_1)\Delta \mathbf{A}^{-1} \UU\right)\\
&\geq {\color{black} -\left(\UU, \nu'(\phi_1) D\mathbf{A}^{-1}\UU \nabla \phi_1\right)} + \nu_{*} \norm{\UU}_{L^2(\Omega)}^2,
\end{align*}
which follows from $\mathbf{A}\ff=
-\Delta \ff$ for any $\ff \in D(\mathbf{A})$, where $D(\mathbf{A})=\dot{H}^2_\sigma(\Omega)$.
Testing \eqref{eq:CH:uni} by $(-\Delta)^{-1}\Phi$, we find
\begin{equation*}
\label{eq:CH_mupltiplied}
\begin{split}
&\frac{1}{2}\dt\norm{\Phi}_{\ast}^2 
+\varepsilon\norm{\nabla \Phi}^2_{L^2(\Omega)} 
+ \frac{1}{\varepsilon}
\underbrace{\left(F'(\phi_1) -F'(\phi_2), \Phi\right)}_{\geq 0}
+ \beta \| \nabla \Phi \cdot \di_1\|_{L^2(\Omega)}^2
\\
&= \frac{\Theta_0}{\varepsilon} \| \Phi\|_{L^2(\Omega)}^2
+\frac{\alpha}{2} \left(\abs{\di_1}^2 -\abs{\di_2}^2, \Phi\right)
+ \int_\Omega \uu_1 \phi \cdot \nabla\left(-\Delta\right)^{-1} \Phi\, \d x + {\color{black} \int_\Omega \UU \phi_2 \cdot \nabla \left(-\Delta\right)^{-1}\Phi\, \d x}
\\
&\quad - \beta\int_\Omega \left(\nabla \phi_2 \cdot \DD \right)\left(\di_1\cdot \nabla \Phi\right)\, \d x - \beta \int_\Omega \left(\nabla \phi_2 \cdot \di_2\right)\left(\DD \cdot \nabla \Phi\right)\, \d x, 
\end{split}
\end{equation*}
where $\| f\|_{\ast}=\| \nabla (-\Delta)^{-1}f\|_{L^2(\Omega)}.$
Next, testing \eqref{eq:d:uni} by $\DD$, we have 
\begin{equation*}
\begin{split}
    &\frac{1}{2}\dt\norm{\DD}_{L^2(\Omega)}^2 + \kappa \norm{\nabla \DD}_{L^2(\Omega)}^2 + \underbrace{\alpha \int_\Omega {\color{black} \left(\abs{\di_1}^2\di_1  -\abs{\di_2}^2\di_2\right)} \cdot \DD\, \d x}_{\geq 0}  
    + \int_\Omega \UU\cdot \nabla \di_2 \DD \, \d x -\alpha \int_\Omega\Phi(\di_1\cdot \DD)\, \d x \\
    &-\alpha \int_\Omega (\phi_2-\phi_{\rm cr})\abs{\DD}^2\, \d x + \underbrace{\beta \int_\Omega \abs{\DD\cdot \nabla \phi_1}^2\, \d x}_{\geq 0} + \beta \int_\Omega (\di_2\cdot \nabla \Phi)(\nabla \phi_1\cdot \DD)\, \d x 
    + \beta \int_\Omega (\di_2 \cdot \nabla \phi_2)(\nabla \Phi\cdot \DD)\, \d x=0.
    \end{split}
\end{equation*}
Therefore, we arrive at
\begin{equation}
    \begin{split}
&\frac{1}{2}\dt \left( \norm{\UU}_{\sharp}^2 + \norm{\Phi}_{\ast}^2 + \norm{\DD}_{L^2(\Omega)}^2 \right) 
+ \nu_{*} \norm{\UU}_{L^2(\Omega)}^2
+ \varepsilon \| \nabla \Phi\|_{L^2(\Omega)}^2
+\kappa \| \nabla \DD \|_{L^2(\Omega)}^2
\\
& \leq- \int_\Omega \left(\nu(\phi_1) -\nu(\phi_2)\right) D\uu_2 :  \nabla \mathbf{A}^{-1}\UU\, \d x
+\int_\Omega \left(\uu_1\otimes \UU + \UU\otimes \uu_2\right) : \nabla \mathbf{A}^{-1}\UU\, \d x
\\
&\quad 
+\varepsilon \int_\Omega\left(\nabla \phi_1 \otimes\nabla \phi + \nabla \phi \otimes\nabla \phi_2\right) : \nabla \mathbf{A}^{-1}\UU\, \d x
+ \kappa \int_\Omega\left(\nabla \di_1 \odot \nabla \DD + \nabla \DD \odot \nabla \di_2 \right) : \nabla \mathbf{A}^{-1}\UU\, \d x\\
&\quad + \int_\Omega \nu'(\phi_1) D\mathbf{A}^{-1}\UU\nabla \phi_1 \cdot \UU \, \d x + \beta \int_\Omega\left(\left(\di_1 \cdot \nabla \phi_1\right)\nabla \phi_1 \otimes \di_1 - \left( \di_2 \cdot \nabla \phi_2\right) \nabla \phi_2 \otimes \di_2 \right): \nabla \mathbf{A}^{-1}\UU\, \d x
\\
&\quad 
+\frac{\Theta_0}{\varepsilon} \| \Phi\|_{L^2(\Omega)}^2
+\frac{\alpha}{2} \int_\Omega \left(\abs{\di_1}^2 -\abs{\di_2}^2\right)  \Phi \, \d x
+ \int_\Omega \uu_1 \Phi \cdot \nabla\left(-\Delta\right)^{-1} \Phi\, \d x + \int_\Omega \UU \phi_2 \cdot \nabla \left(-\Delta\right)^{-1}\Phi\, \d x
\\
&\quad - \beta\int_\Omega \left(\nabla \phi_2 \cdot \DD \right)\left(\di_1\cdot \nabla \Phi\right)\, \d x - \beta \int_\Omega \left(\nabla \phi_2 \cdot \di_2\right)\left(\DD \cdot \nabla \Phi\right)\, \d x - \int_\Omega \UU\cdot \nabla \di_2 \, \DD \, \d x
\\
& \quad  +\alpha \int_\Omega\Phi(\di_1\cdot \DD)\, \d x  +\alpha \int_\Omega (\phi_2-\phi_{\rm cr})\abs{\DD}^2\, \d x  - \beta \int_\Omega (\di_2\cdot \nabla \Phi)(\nabla \phi_1\cdot \DD)\, \d x 
    - \beta \int_\Omega (\di_2 \cdot \nabla \phi_2)(\nabla \Phi\cdot \DD)\, \d x.
    \end{split}
\end{equation}
By using \cite[Proposition C.1]{giorgini2019uniqueness} and $\nu(\cdot) \in W^{1,\infty}(\mathbb{R})$, we infer that
$$
\begin{aligned}
\left| - \int_\Omega \left(\nu(\phi_1) -\nu(\phi_2)\right) D\uu_2 :  \nabla \mathbf{A}^{-1}\UU\, \d x
\right| 
&\leq C\norm{D \uu_2}_{L^2(\Omega)} \norm{\nabla \Phi}_{L^2(\Omega)} \norm{\nabla \mathbf{A}^{-1} \UU}_{L^2(\Omega)} \ln^{\frac{1}{2}}\left(C\, \frac{\norm{\UU}_{L^2(\Omega)}}{\norm{\nabla \mathbf{A}^{-1}\UU}_{L^2(\Omega)}}\right)\\
&\leq \frac{\varepsilon}{7} \norm{\nabla \Phi}_{L^2(\Omega)}^2 + C \norm{D \uu_2}_{L^2(\Omega)}^2 \norm{\UU}_{\sharp}^2 \ln\left( \frac{C}{\norm{\UU}_{\sharp}^2}\right).
\end{aligned}
$$
By \eqref{LADY}, we have 
$$
\begin{aligned}
\left| \int_\Omega {\color{black} \left(\uu_1\otimes \UU + \UU \otimes \uu_2\right)}:\nabla \mathbf{A}^{-1}\UU\, \d x \right| 
&\leq C\left(\norm{\uu_1}_{L^2(\Omega)}^{\frac{1}{2}}\norm{\uu_1}_{H^1(\Omega)}^{\frac{1}{2}} + \norm{\uu_2}_{L^2(\Omega)}^{\frac{1}{2}}\norm{\uu_2}_{H^1(\Omega)}^{\frac{1}{2}}\right)\norm{\UU}_{\sharp}^{\frac{1}{2}}\norm{\UU}_{L^2(\Omega)}^{\frac{3}{2}}\\
&\leq \frac{\nu_{*}}{6}\norm{\UU}_{L^2(\Omega)}^2
+
C \left(\norm{\uu_1}_{H^1(\Omega)}^2 +\norm{\uu_2}_{H^1(\Omega)}^2 \right)\norm{\UU}_{\sharp}^2
\end{aligned}
$$
and
$$
\begin{aligned}
&\left|\varepsilon \int_\Omega\left(\nabla \phi_1 \otimes\nabla \phi + \nabla \phi \otimes\nabla \phi_2\right) : \nabla \mathbf{A}^{-1}\UU\, \d x
+ \kappa \int_\Omega\left(\nabla \di_1 \odot \nabla \DD + \nabla \DD \odot \nabla \di_2 \right) : \nabla \mathbf{A}^{-1}\UU\, \d x \right| 
\\
&\leq 
\frac{\nu_\ast}{8} \| \UU\|_{L^2(\Omega)}^2
+\frac{\varepsilon}{7}\norm{\nabla \phi}_{L^2(\Omega)}^2 
+ \frac{\kappa}{5} \| \nabla \DD\|_{L^2(\Omega)}^2 
+ C \left( \| \phi_1\|_{H^2(\Omega)}^2 +
\| \phi_2\|_{H^2(\Omega)}^2 +
\| \di_1\|_{H^2(\Omega)}^2 +
\| \di_2\|_{H^2(\Omega)}^2 \right)\norm{\uu}_{\sharp}^2.
\end{aligned}
$$
Recalling that $\nu(\cdot) \in W^{1,\infty}(\mathbb{R})$, and {\color{black} exploiting} again \eqref{LADY}, we also find 
\begin{align*}
\left| \int_\Omega \nu'(\phi_1) D\mathbf{A}^{-1}\UU\nabla \phi_1 \cdot \UU \, \d x \right| 
\leq \frac{\nu_{*}}{6} \norm{\UU}_{L^2(\Omega)}^2 + C \| \phi_1\|_{H^2(\Omega)}^2 \norm{\UU}_{\sharp}^2
\end{align*}
and
$$
\begin{aligned}
&\left| \beta \int_\Omega\left(\left(\di_1 \cdot \nabla \phi_1\right)\nabla \phi_1 \otimes \di_1 - \left( \di_2 \cdot \nabla \phi_2\right) \nabla \phi_2 \otimes \di_2 \right): \nabla \mathbf{A}^{-1}\UU\, \d x
\right| \\
&\leq \left| \beta \int_\Omega \left(\di_1 \cdot \nabla \phi_1\right) 
\nabla \phi_1 \otimes \DD : \nabla \mathbf{A}^{-1}\UU \, \d x \right|
+ \left| \beta \int_\Omega \left(\di_1\cdot \nabla \phi_1 \right) \nabla \Phi \otimes \di_2 : \nabla \mathbf{A}^{-1}\UU \, \d x \right|
\\
&\quad +\left| \beta \int_\Omega \left( \di_1 \cdot \nabla \Phi -\DD \cdot \nabla \phi_2\right) \nabla \phi_2 \otimes \di_2 :\nabla \mathbf{A}^{-1}\UU \, \d x
\right|
\\
&\leq \beta \left(\norm{\di_1}_{L^\infty(\Omega)} \norm{\nabla \phi_1}_{L^4(\Omega)}^2  + \norm{\nabla \phi_2}_{L^4(\Omega)}^2 \norm{\di_2}_{L^\infty(\Omega)}\right)
\norm{\DD}_{L^4(\Omega)}
\norm{\nabla \mathbf{A}^{-1}\UU}_{L^4(\Omega)} \\
&\quad 
+ \beta \left(
\norm{\nabla \phi_1}_{L^4(\Omega)} +\norm{\nabla \phi_2}_{L^4(\Omega)}\right)  \norm{\di_1}_{L^\infty(\Omega)}\norm{\di_2}_{L^\infty(\Omega)}\norm{\nabla \Phi}_{L^2(\Omega)}  \norm{\nabla \mathbf{A}^{-1}\UU}_{L^4(\Omega)}\\
&\leq C \left(\| \phi_1\|_{H^2(\Omega)}+
\| \phi_2\|_{H^2(\Omega)} \right)
\norm{\DD}_{L^2(\Omega)}^{\frac{1}{2}}
\norm{\nabla \DD}_{L^2(\Omega)}^{\frac{1}{2}} \norm{\UU}_{\sharp}^{\frac{1}{2}} \norm{\UU}_{L^2(\Omega)}^{\frac{1}{2}}
\\
&\quad +C\left(\| \phi_1\|_{H^2(\Omega)}^\frac12+
\phi_2\|_{H^2(\Omega)}^\frac12\right)
\| \nabla \Phi\|_{L^2(\Omega)}
\norm{\UU}_{\sharp}^{\frac{1}{2}} \norm{\UU}_{L^2(\Omega)}^{\frac{1}{2}}
\\
&\leq \frac{\nu_{*}}{6}\norm{\UU}_{L^2(\Omega)}^2 + \frac{\varepsilon}{7}\norm{\nabla \Phi}_{L^2(\Omega)}^2 
+ \frac{\kappa}{5} \norm{\nabla \DD}_{L^2(\Omega)}^2 + C
\left( \| \phi_1\|_{H^2(\Omega)}^2
+\| \phi_2\|_{H^2(\Omega)}^2 \right)
\left(\norm{\DD}_{L^2(\Omega)}^2+ \norm{\UU}_{\sharp}^2\right).
\end{aligned}
$$
It is easily seen that
\begin{align*}
&\left| \frac{\Theta_0}{\varepsilon} \| \Phi\|_{L^2(\Omega)}^2
+\frac{\alpha}{2} \int_\Omega \left(\abs{\di_1}^2 -\abs{\di_2}^2\right)  \Phi \, \d x
 +\alpha \int_\Omega\Phi(\di_1\cdot \DD)\, \d x  +\alpha \int_\Omega (\phi_2-\phi_{\rm cr})\abs{\DD}^2\, \d x
 \right|
 \\
 &\leq 
 \frac{\varepsilon}{7} \| \nabla \Phi\|_{L^2(\Omega)}^2 + C \| \Phi\|_{\ast}^2 
 +C \| \DD\|_{L^2(\Omega)}^2
\end{align*}
and
\begin{align*}
    \left| \int_\Omega \uu_1 \Phi \cdot \nabla\left(-\Delta\right)^{-1} \Phi\, \d x + \int_\Omega \UU \phi_2 \cdot \nabla \left(-\Delta\right)^{-1}\Phi\, \d x
     \right|
      &\leq C \| \uu_1\|_{L^6(\Omega)} \| \Phi\|_{L^3(\Omega)} \| \Phi\|_{\ast}^2+
     \|\UU \|_{L^2(\Omega)} \| \Phi\|_{\ast}^2
     \\
     &\leq 
     \frac{\nu_{*}}{6}\norm{\UU}_{L^2(\Omega)}^2 + \frac{\varepsilon}{7}\norm{\nabla \Phi}_{L^2(\Omega)}^2 
    + C \norm{\Phi}_\ast^2.
\end{align*}
By exploiting \eqref{LADY} once again, we deduce that
$$
\begin{aligned}
\left| -\int_\Omega \UU \cdot \nabla \di_2 \DD\, \d x \right|
&\leq C \norm{\UU}_{L^2(\Omega)} \norm{\nabla \di_2}_{L^2(\Omega)}^{\frac{1}{2}}\norm{\di_2}_{H^2(\Omega)}^{\frac{1}{2}} \norm{\DD}_{L^2(\Omega)}^{\frac{1}{2}} \norm{\nabla \DD}_{L^2(\Omega)}^{\frac{1}{2}}\\
&\leq \frac{\nu_{*}}{6}\norm{\UU}_{L^2(\Omega)}^2 + \frac{\kappa}{5}\norm{\nabla \DD}_{L^2(\Omega)}^2 + C \norm{\di_2}_{H^2(\Omega)}^2 \norm{\DD}_{L^2(\Omega)}^2.
\end{aligned}
$$
In a similar way, we also find
$$
\begin{aligned}
&\left| - \beta\int_\Omega \left(\nabla \phi_2 \cdot \DD\right)\left(\di_1\cdot \nabla \Phi\right)\, \d x - \beta \int_\Omega \left(\nabla \phi_2 \cdot \di_2\right)\left(\DD \cdot \nabla \Phi\right)\, \d x 
\right|
 \\
 &\leq C\norm{\nabla \phi_2 }_{L^4(\Omega)}
 \norm{\DD}_{L^4(\Omega)} \left( \norm{\di_1}_{L^\infty(\Omega)}
 +\norm{\di_2}_{L^\infty(\Omega)}\right)
 \norm{\nabla \Phi}_{L^2(\Omega)}
 \\
&\leq C\norm{\phi_2}_{H^2(\Omega)}^\frac12
 \norm{\DD}_{L^2(\Omega)}^\frac12
 \norm{\nabla \DD}_{L^2(\Omega)}^\frac12
 \norm{\nabla \Phi}_{L^2(\Omega)}
 \\
&\leq\frac{\varepsilon}{7}\norm{\nabla \Phi}_{L^2(\Omega)}^2 + \frac{\kappa}{5}\norm{\nabla \DD}_{L^2(\Omega)}^2 + C \norm{\phi_2}_{H^2(\Omega)}^2 \norm{\DD}_{L^2(\Omega)}^2
\end{aligned}
$$
and 
\begin{align*}
&\left| - \beta \int_\Omega (\di_2\cdot \nabla \Phi)(\nabla \phi_1\cdot \DD)\, \d x 
    - \beta \int_\Omega (\di_2 \cdot \nabla \phi_2)(\nabla \Phi\cdot \DD)\, \d x
    \right| 
    \\
    &\leq \frac{\varepsilon}{7}\norm{\nabla \Phi}_{L^2(\Omega)}^2 + \frac{\kappa}{5}\norm{\nabla \DD}_{L^2(\Omega)}^2 + C \left( \norm{\phi_1}_{H^2(\Omega)}^2+ \norm{\phi_2}_{H^2(\Omega)}^2 \right) \norm{\DD}_{L^2(\Omega)}^2.
\end{align*}
Setting
$\mathcal{H}:= \norm{\UU}_{\sharp}^2 + \norm{\Phi}_{\ast}^2+
\norm{\DD}_{L^2(\Omega)}^2$,
we end up with the following differential inequality
\begin{equation}
    \begin{aligned}
    \frac{1}{2}\dt\mathcal{H} \leq \mathcal{F}_1 \mathcal{H}
    + \mathcal{F}_2 \norm{\UU}_{\sharp}^2 \ln\left( \frac{C}{\norm{\UU}_{\sharp}^2}\right)
    \end{aligned}
\end{equation}
almost everywhere in $(0,\infty)$, where 
$$
\begin{aligned}
&\mathcal{F}_1:= C\left(\norm{\uu_1}_{H^1(\Omega)}^2+\norm{\uu_2}_{H^1(\Omega)}^2 + \norm{\di_1}_{H^2(\Omega)}^2+ \norm{\di_2}_{H^2(\Omega)}^2 + \norm{\phi_1}_{H^2(\Omega)}^2 + \norm{\phi_2}^2_{H^2(\Omega)}\right),
\quad \mathcal{F}_2 := C \norm{D \uu_2}_{L^2(\Omega)}^2.
\end{aligned}
$$
Observing that the real function $s \ln \left( \frac{C}{s}\right)$ is increasing for $s\in (0,C)$, we rewrite the above as
\begin{equation}\label{eq:inequality_unique}
\dt\mathcal{H} \leq \mathcal{F} \mathcal{H}\ln\left(\frac{C}{\mathcal{H}}\right),
\end{equation}
where $\mathcal{F}=\mathcal{F}_1+\mathcal{F}_2 \in L^1(0,T)$ for any $T>0$. Since $g(s):= s \ln\left[\frac{C}{s}\right]$ is an {\em Osgood's modulus of continuity}, an application of the {\em Osgood lemma} to \eqref{eq:inequality_unique}
implies the uniqueness of weak solutions. This shows the second part of Theorem \ref{MR}.
\end{proof}

\section{Global strong solutions and regularity in two dimensions}
\label{sec:strong_solution&regularity}


In this section, we prove the last part of Theorem \ref{MR}, i.e. the existence of global strong solutions in two dimensions. We provide {\it a priori} higher-order energy estimates in Sobolev spaces whose rigorous justification can be carry out in an approximation scheme as for the weak solutions in Sections \ref{subsec:gamma} and \ref{sec:gamma0}.

\begin{proof}[Proof of Theorem \ref{MR} - Existence of strong solutions if $n=2$.]
Testing \eqref{NSE} by $- \Delta \uu$, we have
\begin{equation}
    \label{eq:NSE_per_partialt_u}
    \begin{aligned}
     \frac12 \dt \norm{\nabla \uu}_{L^2(\Omega)}^2 + \frac{\nu_{*}}{2}\norm{\Delta \uu}_{L^2(\Omega)}^2 \leq &
     - \int_\Omega \frac{\nu'(\phi)}{2} \left( D\uu \nabla \phi\right) \cdot \Delta \uu\, \d x 
     + \int_\Omega \left( \uu \cdot \nabla\right) \uu \cdot \Delta \uu\, \d x\\
     &+ \int_\Omega \mu \nabla \phi \cdot \prt{- \Delta \uu}\, \d x + 
     \int_\Omega (\nabla \di)^T \vect{h} \cdot \prt{- \Delta \uu}\, \d x.
    \end{aligned}
\end{equation}
Next, we multiply \eqref{var_phi} by $\partial_t \mu$ and integrate over $\Omega$.
Observing that
$$
\begin{aligned}
\left(\partial_t \phi, \partial_t \mu\right)  
&= -\varepsilon \left(\partial_t \phi, \Delta \partial_t \phi\right)
+ \frac{1}{\varepsilon} \left(\partial_t \phi,  F''(\phi)\partial_t \phi\right) - \frac{\Theta_0}{\varepsilon}\| \partial_t \phi\|_{L^2(\Omega)}^2
- \alpha  \left(\partial_t \phi, \di\cdot \partial_t \di\right) -\beta \left(\partial_t \phi,\partial_t\diver\left(\left(\nabla \phi\cdot \di\right)\di\right) \right)\\
&= \varepsilon\norm{\nabla \partial_t \phi}_{L^2(\Omega)}^2 
+ \frac{1}{\varepsilon} \int_\Omega F''(\phi) \abs{\partial_t \phi}^2 \, \d x 
 - \frac{\Theta_0}{\varepsilon}\| \partial_t \phi\|_{L^2(\Omega)}^2
 - \alpha \int_\Omega \partial_t \phi \left(\di\cdot \partial_t \di\right)\, \d x
\\
&\quad + 
\beta \int_\Omega \left| \nabla \partial_t \phi \cdot \di\right|^2 \, \d x
+\beta \int_\Omega \left(\nabla \partial_t \phi \cdot \di \right) \left( \nabla \phi\cdot \partial_t \di \right) \, \d x
+\beta \int_\Omega \left(\nabla \partial_t \phi \cdot \partial_t \di \right) \left( \nabla \phi\cdot \di \right) \, \d x
\end{aligned}
$$
and
$$
\begin{aligned}
\left(\uu\cdot \nabla \phi, \partial_t \mu\right) = \dt\Big( \prt{\uu\cdot \nabla \phi, \mu}\Big) -\prt{\partial_t \uu\cdot \nabla \phi, \mu}-\prt{\uu\cdot \nabla \partial_t \phi, \mu},
\end{aligned}
$$
we find
\begin{equation}
\label{UU:testCH}
\begin{split}
    &\dt\left( \frac{1}{2}\norm{\nabla \mu}_{L^2(\Omega)}^2 +\prt{\uu\cdot \nabla \phi, \mu}\right) 
    + \varepsilon\norm{\nabla \partial_t \phi}_{L^2(\Omega)}^2
    + \frac{1}{\varepsilon} \int_\Omega F''(\phi) \abs{\partial \phi}^2 \, \d x
   + \beta \int_\Omega \left| \nabla \partial_t \phi \cdot \di\right|^2 \, \d x
    \\
    &=
      \frac{\Theta_0}{\varepsilon}\| \partial_t \phi\|_{L^2(\Omega)}^2
    + \alpha \int_\Omega \partial_t \phi \left(\di\cdot \partial_t \di\right)\, \d x -\beta \int_\Omega \left(\nabla \partial_t \phi \cdot \di \right) \left( \nabla \phi\cdot \partial_t \di \right) \, \d x
\\
&\quad -\beta \int_\Omega \left(\nabla \partial_t \phi \cdot \partial_t \di \right) \left( \nabla \phi\cdot \di \right) \, \d x
+\int_\Omega \partial_t \uu\cdot \nabla \phi\,  \mu \,\d x 
+\int_{\Omega} \uu\cdot \nabla \partial_t \phi \, \mu \, \d x.
    \end{split}
\end{equation}
Now, differentiating \eqref{var_d} in time,  multiplying by $\partial_t \di$ and integrating over $\Omega$, we obtain
\begin{equation}
\label{UU:testLC}
\begin{split}
    &\frac12 \frac{{\rm d}}{{\rm d}t}\| \partial_t \di\|_{L^2(\Omega)}^2 
    +\kappa \| \nabla \partial_t \di \|_{L^2(\Omega)}^2
    + 2\alpha \int_\Omega \left| \di \cdot \partial_t \di \right|^2 \, \d x 
    +\alpha \int_\Omega |\di |^2 |\partial_t \di|^2 \, \d x
    + \beta \int_{\Omega} |\partial_t \di \cdot \nabla \phi|^2 \, \d x
    \\
    &=-\int_{\Omega} \left( \partial_t \uu \cdot \nabla \right) \di \cdot \partial_t \di \, \d x  
    +\alpha \int_\Omega \partial_t \phi \, \di \cdot \partial_t \di \, \d x
    +\alpha \int_\Omega \left( \phi-\phi_{\rm cr}\right) |\partial_t \di |^2 \, \d x
    \\
    & \quad - \beta \int_{\Omega} \left( \di \cdot \nabla \partial_t \phi\right) \left( \nabla \phi \cdot \partial_t \di \right) \, \d x
    - \beta \int_{\Omega} \left( \di \cdot \nabla \phi\right) \left(\nabla \partial_t \phi \cdot \partial_t \di \right) \, \d x.
    \end{split}
\end{equation}
Thus, adding up \eqref{eq:NSE_per_partialt_u}, \eqref{UU:testCH} and \eqref{UU:testLC}, we arrive at 
\begin{equation*}
\begin{split}
    &\dt\left( \frac12 \| \nabla \uu\|_{L^2(\Omega)}^2 + \frac{1}{2}\norm{\nabla \mu}_{L^2(\Omega)}^2 
    +\frac12 \| \partial_t \di\|_{L^2(\Omega)}^2
    +\prt{\uu\cdot \nabla \phi, \mu}\right) 
    +\nu_\ast \| \Delta \uu\|_{L^2(\Omega)}^2
    + \varepsilon\norm{\nabla \partial_t \phi}_{L^2(\Omega)}^2 + \kappa \|\nabla \partial_t \di \|_{L^2(\Omega)}^2
    \\
    &\quad + \frac{1}{\varepsilon} \int_\Omega F''(\phi) \abs{\partial_t \phi}^2 \, \d x
    + 2\alpha \int_\Omega \left| \di \cdot \partial_t \di \right|^2 \, \d x 
    +\alpha \int_\Omega |\di |^2 |\partial_t \di|^2 \, \d x
   + \beta \int_\Omega \left| \nabla \partial_t \phi \cdot \di\right|^2 \, \d x
   + \beta \int_{\Omega} |\partial_t \di \cdot \nabla \phi|^2 \, \d x
   \\
   &
   \leq - \int_\Omega \frac{\nu'(\phi)}{2} \left( D\uu \nabla \phi\right) \cdot \Delta \uu\, \d x 
     + \int_\Omega \left( \uu \cdot \nabla\right) \uu \cdot \Delta \uu\, \d x
    + \int_\Omega \mu \nabla \phi \cdot \prt{- \Delta \uu}\, \d x +\int_\Omega (\nabla \di)^T \vect{h}\cdot \prt{- \Delta \uu}\, \d x
    \\
    &\quad 
    + \frac{\Theta_0}{\varepsilon}\| \partial_t \phi\|_{L^2(\Omega)}^2
    + 2 \alpha \int_\Omega \partial_t \phi \left(\di\cdot \partial_t \di\right)\, \d x
    +\alpha \int_\Omega \left( \phi-\phi_{\rm cr}\right) |\partial_t \di |^2 \, \d x
    +\int_\Omega \partial_t \uu\cdot \nabla \phi\,  \mu \,\d x 
    +\int_{\Omega} \uu\cdot \nabla \partial_t \phi \, \mu \, \d x
\\
&\quad -\int_{\Omega} \left( \partial_t \uu \cdot \nabla \right) \di \cdot \partial_t \di \, \d x   
 -2\beta \int_\Omega \left(\nabla \partial_t \phi \cdot \di \right) \left( \nabla \phi\cdot \partial_t \di \right) \, \d x
-2\beta \int_\Omega \left(\nabla \partial_t \phi \cdot \partial_t \di \right) \left( \nabla \phi\cdot \di \right) \, \d x.
\end{split}
\end{equation*}
We proceed in estimating the terms on the right-hand side of the above differential inequality. In the sequel, $C$ stands for a generic positive constant depending on the parameters of the system and the norms of the inidial data. By using \eqref{GN} and \eqref{LADY} together with \eqref{W:regularity}, we find
$$
\begin{aligned}
\left| \int_\Omega \frac{\nu'(\phi)}{2}
\left( D\uu \nabla \phi\right)\cdot  \Delta \uu\, \d x \right| &\leq \norm{\frac{\nu'(\phi)}{2}}_{L^\infty(\Omega)} 
\norm{\nabla \phi}_{L^4(\Omega)}
\norm{D\uu}_{L^4(\Omega)} 
\norm{\Delta \uu}_{L^2(\Omega)} 
\\
&\leq C\norm{\phi}_{L^\infty(\Omega)}^\frac12 \|\phi \|_{H^2(\Omega)}^\frac12 
\norm{\nabla \uu}_{L^2(\Omega)}^\frac12 
\norm{\Delta \uu}_{L^2(\Omega)}^\frac32\\
&\leq \frac{\nu_{*}}{8} \norm{\Delta \uu}_{L^2(\Omega)}^2 
+ C \|\phi \|_{H^2(\Omega)}^2\norm{\nabla \uu}_{L^2(\Omega)}^2.
\end{aligned}
$$
It is well-known that
$$
\begin{aligned}
\left| \int_\Omega \left(\uu \cdot \nabla \right) \uu \cdot \Delta \uu\, \d x\right|  
&\leq \frac{\nu_{*}}{8} \norm{\Delta \uu}_{L^2(\Omega)}^2 + C \norm{\nabla \uu}_{L^2(\Omega)}^4
\end{aligned}
$$
and
$$
\begin{aligned}
\left| \int_\Omega \mu \nabla \phi \cdot \prt{- \Delta \uu}\, \d x \right| 
&=\left| \int_\Omega \phi\nabla \mu \cdot \prt{- \Delta \uu}\, \d x\right|
\leq \frac{\nu_{*}}{8} \norm{\Delta \uu}_{L^2(\Omega)}^2 
+ C \norm{\nabla \mu}_{L^2(\Omega)}^2.
\end{aligned}
$$
Since $\| \partial_t \phi\|_{H^1(\Omega)'}
\leq C \left( \| \uu\|_{L^2(\Omega)}+ \| \nabla \mu\|_{L^2(\Omega)}\right)$, we infer by Hilbert interpolation that 
\begin{equation}
\begin{split}
    \left| \frac{\Theta_0}{\varepsilon}\| \partial_t \phi\|_{L^2(\Omega)}^2
    + 2 \alpha \int_\Omega \partial_t \phi \left(\di\cdot \partial_t \di\right)\, \d x\right| 
    &\leq C \| \partial_t \phi\|_{H^1(\Omega)'} \| \nabla \partial_t \phi\|_{L^2(\Omega)} +
    C \| \partial_t \phi\|_{L^2(\Omega)} 
    \| \di\|_{L^\infty(\Omega)}
    \| \partial_t \di\|_{L^2(\Omega)}
    \\
    &\leq \frac{\varepsilon}{6} \| \nabla \partial_t \phi\|_{L^2(\Omega)}^2
    + C \left( 1+ \| \nabla \mu\|_{L^2(\Omega)}^2 + \| \partial_t \di\|_{L^2(\Omega)}^2\right).
    \end{split}
\end{equation}
Also, it is immediately seen that
\begin{equation}
    \left| \alpha \int_\Omega \left( \phi-\phi_{\rm cr}\right) |\partial_t \di |^2 \, \d x \right|
    \leq C \| \partial_t \di\|_{L^2(\Omega)}^2.
\end{equation}
By exploiting \eqref{var_d}, \eqref{GN} and \eqref{LADY}, we have
\begin{equation}
\label{est-dhu}
\begin{split}
&\left| \int_\Omega (\nabla \di)^T \vect{h}\cdot \prt{- \Delta \uu}\, \d x\right|
\\
&\leq \norm{\nabla \di}_{L^4(\Omega)} \norm{\vect{h}}_{L^4(\Omega)} \norm{\Delta \uu}_{L^2(\Omega)}\\
&\leq \frac{\nu_{*}}{16} \norm{\Delta \uu}_{L^2(\Omega)}^2 + C \norm{ \di}_{L^\infty(\Omega)}
\norm{\Delta \di}_{L^2(\Omega)} \norm{\vect{h}}_{L^2(\Omega)}
\norm{\vect{h}}_{H^1(\Omega)}\\
&\leq \frac{\nu_{*}}{16} \norm{\Delta \uu}_{L^2(\Omega)}^2 + C \norm{\Delta \di}_{L^2(\Omega)} \norm{\vect{h}}_{L^2(\Omega)}^2 +C \norm{\Delta \di}_{L^2(\Omega)} \norm{\vect{h}}_{L^2(\Omega)} \norm{\nabla \prt{\partial_t \di + \uu\cdot \nabla \di}}_{L^2(\Omega)}\\
&\leq \frac{\nu_{*}}{16} \norm{\Delta \uu}_{L^2(\Omega)}^2 + \frac{\kappa}{6} \norm{\nabla\partial_t \di}_{L^2(\Omega)}^2 + C \left(1+ \norm{\Delta \di}_{L^2(\Omega)}^2\right)\norm{\vect{h}}^2_{L^2(\Omega)} 
\\
&\quad 
+ C \norm{\Delta \di}_{L^2(\Omega)}
\norm{\vect{h}}_{L^2(\Omega)}  
\norm{\nabla \uu}_{L^4(\Omega)}
\norm{\nabla \di}_{L^4(\Omega )} + C \norm{\Delta \di}_{L^2(\Omega)}^2
\norm{\vect{h}}_{L^2(\Omega)}  
\norm{\uu}_{L^\infty(\Omega)}
\\
&\leq \frac{\nu_{*}}{16} \norm{\Delta \uu}_{L^2(\Omega)}^2 + \frac{\kappa}{6} \norm{\nabla\partial_t \di}_{L^2(\Omega)}^2 + C \left(1+ \norm{\Delta \di}_{L^2(\Omega)}^2\right)\norm{\vect{h}}^2_{L^2(\Omega)}  
\\
&\quad 
+ C \norm{\Delta \di}_{L^2(\Omega)}^\frac32
\norm{\vect{h}}_{L^2(\Omega)}  
\norm{\nabla \uu}_{L^2(\Omega)}^\frac12
\norm{\Delta \uu}_{L^2(\Omega)}^\frac12
+ C \norm{\Delta \di}_{L^2(\Omega)}^2
\norm{\vect{h}}_{L^2(\Omega)}  
\norm{\Delta \uu}_{L^2(\Omega)}^\frac12
\\
&\leq \frac{\nu_{*}}{8} \norm{\Delta \uu}_{L^2(\Omega)}^2 + \frac{\kappa}{6} \norm{\nabla\partial_t \di}_{L^2(\Omega)}^2 + C \left(1+ \norm{\Delta \di}_{L^2(\Omega)}^2\right)\norm{\vect{h}}^2_{L^2(\Omega)}  
\\
&\quad 
+ C \norm{\Delta \di}_{L^2(\Omega)}^2
\norm{\vect{h}}_{L^2(\Omega)}^\frac43  
\norm{\nabla \uu}_{L^2(\Omega)}^\frac23
+C \norm{\Delta \di}_{L^2(\Omega)}^\frac83
\norm{\vect{h}}_{L^2(\Omega)}^\frac43
\\
&\leq \frac{\nu_{*}}{8} \norm{\Delta \uu}_{L^2(\Omega)}^2 + \frac{\kappa}{6} \norm{\nabla\partial_t \di}_{L^2(\Omega)}^2 + C \left(1+ \norm{\Delta \di}_{L^2(\Omega)}^2\right)
\left(\norm{\vect{h}}_{L^2(\Omega)}^2+
\norm{\nabla \uu}_{L^2(\Omega)}^2 +
\norm{\Delta \di}_{L^2(\Omega)}^2 \right).
\end{split}
\end{equation}
Similarly, we obtain
\begin{equation}
    \begin{split}
       \left| \int_\Omega \partial_t \uu\cdot \nabla \phi\,  \mu \,\d x 
    +\int_{\Omega} \uu\cdot \nabla \partial_t \phi \, \mu \, \d x \right|
    &= \left| -\int_\Omega \partial_t \uu\cdot \nabla \mu\,  \phi \,\d x 
    +\int_{\Omega} \uu\cdot \nabla \partial_t \phi \, \mu \, \d x \right|
    \\
    &\leq \frac{\varpi}{2} \|\partial_t \uu \|_{L^2(\Omega)}^2
    +\frac{\varepsilon}{6}\| \nabla \partial_t \phi\|_{L^2(\Omega)}^2
    + C\left( 1+\| \uu\|_{H^1(\Omega)}^2\right) \left(1+\| \nabla \mu\|_{L^2(\Omega)}^2\right)
    \end{split}
\end{equation}
and
\begin{equation}
\begin{split}
    \left|-\int_{\Omega} \left( \partial_t \uu \cdot \nabla \right) \di \cdot \partial_t \di \, \d x \right|
    &\leq \| \partial_t \uu\|_{L^2(\Omega)}
    \| \nabla \di\|_{L^4(\Omega)}
    \| \partial_t \di\|_{L^4(\Omega)}
    \\
    &\leq C\| \partial_t \uu\|_{L^2(\Omega)}
    \| \Delta \di\|_{L^2(\Omega)}^\frac12
    \| \partial_t \di\|_{L^2(\Omega)}^\frac12
    \| \partial_t \di\|_{H^1(\Omega)}^\frac12\\
    &\leq \frac{\varpi}{2} \|\partial_t \uu \|_{L^2(\Omega)}^2 + \frac{\kappa}{6}\| \nabla \partial_t \di\|_{L^2(\Omega)}^2
    +C \left( 1+ \| \Delta \di\|_{L^2(\Omega)}^2\right) \| \partial_t \di\|_{L^2(\Omega)}^2,
\end{split}
\end{equation}
where $\varpi$ is a sufficiently small parameter whose value will be determined later on.
Lastly, we find
\begin{equation}
\begin{split}
&\left| -2\beta \int_\Omega \left(\nabla \partial_t \phi \cdot \di \right) \left( \nabla \phi\cdot \partial_t \di \right) \, \d x
-2\beta \int_\Omega \left(\nabla \partial_t \phi \cdot \partial_t \di \right) \left( \nabla \phi\cdot \di \right) \, \d x \right|
\\
&\leq C \|\nabla \partial_t \phi \|_{L^2(\Omega)} \| \di\|_{L^\infty(\Omega)}
\| \partial_t \di\|_{L^4(\Omega)} 
\| \nabla \phi\|_{L^4(\Omega)}
\\
&\leq C\|\nabla \partial_t \phi \|_{L^2(\Omega)} 
\| \partial_t \di\|_{L^2(\Omega)}^\frac12
    \| \partial_t \di\|_{H^1(\Omega)}^\frac12
\| \phi\|_{H^2(\Omega)}^\frac12
\\
&\leq \frac{\varepsilon}{6}\| \nabla \partial_t \phi\|_{L^2(\Omega)}^2+
\frac{\kappa}{6}\| \nabla \partial_t \di\|_{L^2(\Omega)}^2
    +C \left( 1+ \| \phi\|_{H^2(\Omega)}^2\right) \| \partial_t \di\|_{L^2(\Omega)}^2.
\end{split}
\end{equation}
Therefore, combining the above estimates, we end up with
\begin{equation}
\label{DI:2}
\begin{split}
    &\dt\left( \frac12 \| \nabla \uu\|_{L^2(\Omega)}^2 + \frac{1}{2}\norm{\nabla \mu}_{L^2(\Omega)}^2 
    +\frac12 \| \partial_t \di\|_{L^2(\Omega)}^2
    +\prt{\uu\cdot \nabla \phi, \mu}\right) 
    + \frac{\nu_\ast}{2} \| \Delta \uu\|_{L^2(\Omega)}^2
    + \frac{\varepsilon}{2}\norm{\nabla \partial_t \phi}_{L^2(\Omega)}^2 
    \\
    &\quad 
    + \frac{\kappa}{2} \|\nabla \partial_t \di \|_{L^2(\Omega)}^2
    + \frac{1}{\varepsilon} \int_\Omega F''(\phi) \abs{\partial_t \phi}^2 \, \d x
    + 2\alpha \int_\Omega \left| \di \cdot \partial_t \di \right|^2 \, \d x 
    +\alpha \int_\Omega |\di |^2 |\partial_t \di|^2 \, \d x
    \\
    &\quad 
   + \beta \int_\Omega \left| \nabla \partial_t \phi \cdot \di\right|^2 \, \d x
   + \beta \int_{\Omega} |\partial_t \di \cdot \nabla \phi|^2 \, \d x
   \\
   &
   \leq  \varpi \|\partial_t \uu \|_{L^2(\Omega)}^2 +C \left( 1+ \|\phi \|_{H^2(\Omega)}^2 + \norm{\nabla \uu}_{L^2(\Omega)}^2 + \| \Delta \di \|_{L^2(\Omega)}^2 \right)
     \left( \norm{\nabla \uu}_{L^2(\Omega)}^2
    +  \norm{\nabla \mu}_{L^2(\Omega)}^2 +\| \partial_t \di\|_{L^2(\Omega)}^2 \right)
\\
&\quad 
 +C
\left(\norm{\vect{h}}_{L^2(\Omega)}^2+
\norm{\nabla \uu}_{L^2(\Omega)}^2 +
\norm{\Delta \di}_{L^2(\Omega)}^2 \right)
\left(1+ \norm{\Delta \di}_{L^2(\Omega)}^2\right).
\end{split}
\end{equation}
Next, repeating the computations in \eqref{E:g_z:3}-\eqref{E:g_z:8}, there exists $C>0$ (which depends on the assumption \eqref{Key-ass}) such that
\begin{equation*}
\norm{\Delta \phi}_{L^2(\Omega)}^2 
+ \norm{\Delta \di}_{L^2(\Omega)}^2
\leq C \| \nabla \mu\|_{L^2(\Omega)} 
+ C \| \hh\|_{L^2(\Omega)}^2
+C(\alpha, \varepsilon, \kappa, \Theta_0, \mathcal{D}_\infty, K_0, |\Omega|).
\end{equation*}
By using \eqref{W:regularity}, \eqref{GN} and \eqref{LADY}, we notice that
\begin{equation*}
\| (\uu \cdot \nabla) \di\|_{L^2(\Omega)}^2
\leq C \| \nabla \uu\|_{L^2(\Omega)} \| \di \|_{H^2(\Omega)} \leq C \| \nabla \uu\|_{L^2(\Omega)} \left( 1+ \| \Delta \di\|_{L^2(\Omega)}\right).
\end{equation*}
Since $\hh=\partial_t \di + (\uu \cdot \nabla) \di$, 
we conclude that
\begin{equation}
\label{est-H^2}
\norm{\Delta \phi}_{L^2(\Omega)}^2 
+ \norm{\Delta \di}_{L^2(\Omega)}^2
\leq C\left( 1 + \| \nabla \mu\|_{L^2(\Omega)}^2 + \| \partial_t \di \|_{L^2(\Omega)}^2 
+ \| \nabla \uu\|_{L^2(\Omega)}^2 
\right).
\end{equation}
Besides, testing \eqref{NSE} by $\partial_t \uu$ and exploiting once again \eqref{W:regularity}, \eqref{GN} and \eqref{LADY}, we find
\begin{equation*}
    \label{p_t_u-2}
    \begin{split}
    \norm{\partial_t \uu}_{L^2(\Omega)}^2 
    &\leq C \| \nabla \uu\|_{L^2(\Omega)}
    \| \Delta \uu\|_{L^2(\Omega)}^\frac12
    \norm{\partial_t \uu}_{L^2(\Omega)}
    + C \| \Delta \uu\|_{L^2(\Omega)} \norm{\partial_t \uu}_{L^2(\Omega)}
    \\
    & \quad 
    + C \| \phi\|_{H^2(\Omega)}^\frac12 
    \| \nabla \uu\|_{L^2(\Omega)}^\frac12
    \| \Delta \uu\|_{L^2(\Omega)}^\frac12
    \norm{\partial_t \uu}_{L^2(\Omega)}
   + C \| \nabla \mu\|_{L^2(\Omega)} 
   \norm{\partial_t \uu}_{L^2(\Omega)}
   \\
   &\quad 
   + C \| \di\|_{H^2(\Omega)}^\frac12 
   \| \hh \|_{L^2(\Omega)}^\frac12 
   \| \hh \|_{H^1(\Omega)}^\frac12 
   \norm{\partial_t \uu}_{L^2(\Omega)}, 
    \end{split}
\end{equation*}
which entails that
\begin{equation}
    \label{p_t_u-3}
    \begin{split}
    \norm{\partial_t \uu}_{L^2(\Omega)}^2 
    &\leq C \| \nabla \uu\|_{L^2(\Omega)}^2
    \| \Delta \uu\|_{L^2(\Omega)}
    + C \| \Delta \uu\|_{L^2(\Omega)}^2
    + C \| \phi\|_{H^2(\Omega)} 
    \| \nabla \uu\|_{L^2(\Omega)}
    \| \Delta \uu\|_{L^2(\Omega)}
    \\
    & \quad 
   + C \| \nabla \mu\|_{L^2(\Omega)}^2
   + C \| \di\|_{H^2(\Omega)}
   \| \hh \|_{L^2(\Omega)} 
   \| \hh \|_{H^1(\Omega)}.
    \end{split}
\end{equation}
Arguing as in \eqref{est-dhu}, we observe that
\begin{equation}
\label{h-H1}
  \begin{split}
      \| \hh \|_{H^1(\Omega)}
      &\leq \| \hh\|_{L^2(\Omega)} +  \| \nabla \partial_t \di \|_{L^2(\Omega)} + \| \nabla  \left( (\uu\cdot \nabla) \di \right) \|_{L^2(\Omega)}
      \\
      & \leq \| \hh\|_{L^2(\Omega)} +  \| \nabla \partial_t \di \|_{L^2(\Omega)} 
      + C \| \nabla \uu\|_{L^2(\Omega)}^\frac12 
      \| \Delta \uu\|_{L^2(\Omega)}^\frac12 \| \di\|_{H^2(\Omega)}^\frac12 
      + \| \Delta \uu\|_{L^2(\Omega)}^\frac12 \| \di \|_{H^2(\Omega)}.
  \end{split}  
\end{equation}
Thus, combining \eqref{p_t_u-3} with \eqref{est-H^2} and \eqref{h-H1}, we infer that
\begin{equation*}
    \label{p_t_u-4}
    \begin{split}
    \norm{\partial_t \uu}_{L^2(\Omega)}^2 
    &\leq 
    C \| \Delta \uu\|_{L^2(\Omega)}^2
    + C \left(\| \nabla \uu\|_{L^2(\Omega)}^2
    +  \| \phi\|_{H^2(\Omega)}^2 \right)
    \| \nabla \uu\|_{L^2(\Omega)}^2
    + C \| \nabla \mu\|_{L^2(\Omega)}^2
    \\
    & \quad 
   + C \| \di\|_{H^2(\Omega)}
   \| \hh \|_{L^2(\Omega)}^2
   + C \| \di\|_{H^2(\Omega)}
   \| \hh \|_{L^2(\Omega)} 
   \| \nabla \partial_t \di\|_{L^2(\Omega)}
   \\
   &\quad 
   + C \| \di\|_{H^2(\Omega)}^2
   \| \hh \|_{L^2(\Omega)}^\frac43 
   \| \nabla \uu\|_{L^2(\Omega)}^\frac23
   +C \| \di\|_{H^2(\Omega)}^\frac83
   \| \hh \|_{L^2(\Omega)}^\frac43
   \\
   &\leq 
   C \| \Delta \uu\|_{L^2(\Omega)}^2
    + C \left(\| \nabla \uu\|_{L^2(\Omega)}^2
    +  \| \phi\|_{H^2(\Omega)}^2 \right)
    \| \nabla \uu\|_{L^2(\Omega)}^2
    + C \| \nabla \mu\|_{L^2(\Omega)}^2
    \\
   &\quad 
   + C \| \di\|_{H^2(\Omega)}
   \| \hh \|_{L^2(\Omega)} 
   \| \nabla \partial_t \di\|_{L^2(\Omega)}
   + C \left( \| \di \|_{H^2(\Omega)}^2+ \| \hh\|_{L^2(\Omega)}^2+ \| \nabla \uu\|_{L^2(\Omega)}^2 \right)\left(1+ \| \Delta \di\|_{L^2(\Omega)}^2 \right)
   \\
   &\leq 
    C \| \Delta \uu\|_{L^2(\Omega)}^2
    + C \| \di\|_{H^2(\Omega)}
   \| \hh \|_{L^2(\Omega)} 
   \| \nabla \partial_t \di\|_{L^2(\Omega)}
    \\
   &\quad 
   + C \left( 1+ \| \phi\|_{H^2(\Omega)}^2 + \| \di \|_{H^2(\Omega)}^2+ \| \hh\|_{L^2(\Omega)}^2+ \| \nabla \uu\|_{L^2(\Omega)}^2 \right)
   \left(1+ 
   \| \nabla \mu\|_{L^2(\Omega)}^2 + \| \partial_t \di \|_{L^2(\Omega)}^2 
+ \| \nabla \uu\|_{L^2(\Omega)}^2 \right).
   \end{split}
\end{equation*}
Multiplying the above inequality by $\frac{\nu_\ast}{4C}$ and adding the resulting relation to \eqref{DI:2}, and setting $\varpi= \frac{\nu_\ast}{8C}$, we find
\begin{equation}
\label{DI:3}
\begin{split}
    &\dt\left( \frac12 \| \nabla \uu\|_{L^2(\Omega)}^2 + \frac{1}{2}\norm{\nabla \mu}_{L^2(\Omega)}^2 
    +\frac12 \| \partial_t \di\|_{L^2(\Omega)}^2
    +\prt{\uu\cdot \nabla \phi, \mu}\right) 
    + \frac{\nu_\ast}{4} \| \Delta \uu\|_{L^2(\Omega)}^2
    + \frac{\nu_\ast}{8C} \| \partial_t \uu\|_{L^2(\Omega)}^2
    \\
    &\quad 
    + \frac{\varepsilon}{2}\norm{\nabla \partial_t \phi}_{L^2(\Omega)}^2 
    + \frac{\kappa}{2} \|\nabla \partial_t \di \|_{L^2(\Omega)}^2
    + \frac{1}{\varepsilon} \int_\Omega F''(\phi) \abs{\partial_t \phi}^2 \, \d x
    + 2\alpha \int_\Omega \left| \di \cdot \partial_t \di \right|^2 \, \d x 
    \\
    &\quad 
   +\alpha \int_\Omega |\di |^2 |\partial_t \di|^2 \, \d x
   + \beta \int_\Omega \left| \nabla \partial_t \phi \cdot \di\right|^2 \, \d x
   + \beta \int_{\Omega} |\partial_t \di \cdot \nabla \phi|^2 \, \d x
   \\
   &
   \leq  
   C \left( 1+ \|\phi \|_{H^2(\Omega)}^2 + \norm{\nabla \uu}_{L^2(\Omega)}^2 + \| \Delta \di \|_{L^2(\Omega)}^2  + \norm{\vect{h}}_{L^2(\Omega)}^2 \right)
     \left( 1+ \norm{\nabla \uu}_{L^2(\Omega)}^2
    +  \norm{\nabla \mu}_{L^2(\Omega)}^2 +\| \partial_t \di\|_{L^2(\Omega)}^2 \right)
\\
&\quad 
 +C
\left(\norm{\vect{h}}_{L^2(\Omega)}^2+
\norm{\nabla \uu}_{L^2(\Omega)}^2 +
\norm{\Delta \di}_{L^2(\Omega)}^2 \right)
\left(1+ \norm{\Delta \di}_{L^2(\Omega)}^2\right)
+ C \| \di\|_{H^2(\Omega)}
   \| \hh \|_{L^2(\Omega)} 
   \| \nabla \partial_t \di\|_{L^2(\Omega)}.
\end{split}
\end{equation}
Finally, on account of \eqref{est-H^2}, we deduce that
\begin{equation}
\label{DI:4}
\begin{split}
    &\dt\left( \frac12 \| \nabla \uu\|_{L^2(\Omega)}^2 + \frac{1}{2}\norm{\nabla \mu}_{L^2(\Omega)}^2 
    +\frac12 \| \partial_t \di\|_{L^2(\Omega)}^2
    +\prt{\uu\cdot \nabla \phi, \mu}\right) 
    + \frac{\nu_\ast}{4} \| \Delta \uu\|_{L^2(\Omega)}^2
    + \frac{\nu_\ast}{8C} \| \partial_t \uu\|_{L^2(\Omega)}^2
    \\
    &\quad 
    + \frac{\varepsilon}{2}\norm{\nabla \partial_t \phi}_{L^2(\Omega)}^2 
    + \frac{\kappa}{4} \|\nabla \partial_t \di \|_{L^2(\Omega)}^2
    + \frac{1}{\varepsilon} \int_\Omega F''(\phi) \abs{\partial_t \phi}^2 \, \d x
    + 2\alpha \int_\Omega \left| \di \cdot \partial_t \di \right|^2 \, \d x 
    \\
    &\quad 
   +\alpha \int_\Omega |\di |^2 |\partial_t \di|^2 \, \d x
   + \beta \int_\Omega \left| \nabla \partial_t \phi \cdot \di\right|^2 \, \d x
   + \beta \int_{\Omega} |\partial_t \di \cdot \nabla \phi|^2 \, \d x
   \\
   &
   \leq  
   C \left( 1+ \|\phi \|_{H^2(\Omega)}^2 + \norm{\nabla \uu}_{L^2(\Omega)}^2 + \| \Delta \di \|_{L^2(\Omega)}^2  + \norm{\vect{h}}_{L^2(\Omega)}^2 \right)
     \left( 1+ \norm{\nabla \uu}_{L^2(\Omega)}^2
    +  \norm{\nabla \mu}_{L^2(\Omega)}^2 +\| \partial_t \di\|_{L^2(\Omega)}^2 \right).
\end{split}
\end{equation}

Let us now set
$$
\mathcal{K}= \frac12 \| \nabla \uu\|_{L^2(\Omega)}^2 + \frac{1}{2}\norm{\nabla \mu}_{L^2(\Omega)}^2 
    +\frac12 \| \partial_t \di\|_{L^2(\Omega)}^2
    +\prt{\uu\cdot \nabla \phi, \mu}.
$$
Arguing as in \cite[proof of Theorem 4.1]{giorgini2019uniqueness}, there exist two positive constants $C_1$ and $C_2$ such that
\begin{equation}
    \label{bel-abo}
   \frac14 \| \nabla \uu\|_{L^2(\Omega)}^2 + \frac{1}{4}\norm{\nabla \mu}_{L^2(\Omega)}^2 
    +\frac12 \| \partial_t \di\|_{L^2(\Omega)}^2 -C_1 \leq \mathcal{K} \leq C_2 \left( \frac12 \| \nabla \uu\|_{L^2(\Omega)}^2 + \frac{1}{2}\norm{\nabla \mu}_{L^2(\Omega)}^2 
    +\frac12 \| \partial_t \di\|_{L^2(\Omega)}^2 \right).
\end{equation}
Then, \eqref{DI:4} can be recast into the differential inequality
\begin{equation}
\label{DI:5}
\begin{split}
\dt \left( C_1 +\mathcal{K} \right) 
+ \mathcal{G}
\leq  \mathcal{R} + \mathcal{R}\left( C_1 +\mathcal{K} \right),
\end{split}
\end{equation}
where 
$$
    \mathcal{G}=\frac{\nu_\ast}{4} \| \Delta \uu\|_{L^2(\Omega)}^2
    + \frac{\nu_\ast}{8C} \| \partial_t \uu\|_{L^2(\Omega)}^2 
    + \frac{\varepsilon}{2}\norm{\nabla \partial_t \phi}_{L^2(\Omega)}^2 
    + \frac{\kappa}{4} \|\nabla \partial_t \di \|_{L^2(\Omega)}^2
$$
and
$$
\mathcal{R}=C \left( 1+ \|\phi \|_{H^2(\Omega)}^2 + \norm{\nabla \uu}_{L^2(\Omega)}^2 + \| \Delta \di \|_{L^2(\Omega)}^2  + \norm{\vect{h}}_{L^2(\Omega)}^2 \right).
$$
An application of the Gronwall lemmas (in both {\it standard} and {\it uniform} versions as in \cite{temam2012infinite}) entail
\begin{equation}
\label{DI:6}
\begin{split}
    &\sup_{t\in [0,\infty)} \left( \| \nabla \uu(t)\|_{L^2(\Omega)}^2 + \norm{\nabla \mu(t)}_{L^2(\Omega)}^2 
    +\| \partial_t \di(t)\|_{L^2(\Omega)}^2 \right)
    + \sup_{t\geq 0} \int_t^{t+1} \mathcal{G}(s)\, \d s
    \\
    &\leq C \left( C_1+\mathcal{K}(0)+
    \sup_{t\geq 0}\int_{t}^{t+1} \mathcal{K}(s)\, \d s
    +\sup_{t\geq 0}\int_{t}^{t+1} \mathcal{R}(s)\, \d s \right) \mathrm{exp} \left(
    \sup_{t\geq 0}\int_{t}^{t+1}  \mathcal{R}(s)\, \d s \right).
\end{split}
\end{equation}
Since $\mathcal{K}$ and $\mathcal{R}$ belong to $ L_{\rm{uloc}}^1(0,\infty)$  from \eqref{W:regularity}, in order to deduce global estimates from \eqref{DI:6} we are left to show that $\mathcal{K}(0)$ is finite. To this end, in light of \eqref{bel-abo}, we only need to take care of $\|\partial_t \di (0)\|_{L^2(\Omega)}$
and $\| \nabla \mu(0)\|_{L^2(\Omega)}$.
By using \eqref{var_d}, we first observe that 
$$
\begin{aligned}
    \|\partial_t \di (0)\|_{L^2(\Omega)}
        &\leq C \norm{\nabla \uu(0)}_{L^2(\Omega)} \norm{\di(0)}_{H^2(\Omega)} + C \norm{\Delta \di(0)}_{L^2(\Omega)} + C\left(\norm{\di(0)}_{L^\infty(\Omega)}\right)
        + \beta \norm{\di(0)}_{L^\infty(\Omega)} \norm{\nabla \phi(0)}^2_{L^4(\Omega)}\\
        &\leq C\left(\norm{\nabla \uu_0}_{L^2(\Omega)}, \norm{\di_0}_{H^2(\Omega)}, \norm{\nabla \phi_0}_{L^4(\Omega)}\right).
\end{aligned}
$$
Furthermore, 
we also have
\begin{equation*}
    \begin{split}
    \| \nabla \mu(0)\|_{L^2(\Omega)}
    &\leq C \norm{ \nabla \left(-\varepsilon \Delta \phi(0)+ \frac{1}{\varepsilon}\Psi'(\phi(0)) -\diver\left(\beta (\nabla \phi(0) \cdot \vect{d}(0)) \vect{d}(0)\right)\right)}_{L^2(\Omega)}+
    C \| \di(0)\|_{L^\infty(\Omega)} \|\nabla \di(0)\|_{L^2(\Omega)}
    \\
    &\leq 
    C \left\| \nabla \left(-\varepsilon \Delta \phi_0+ \frac{1}{\varepsilon}\Psi'(\phi_0) -\diver\left(\beta (\nabla \phi_0\cdot \vect{d}_0) \vect{d}_0\right)\right) \right\|_{L^2(\Omega)}+
    C\norm{\di_0}_{H^2(\Omega)}.
     \end{split}
\end{equation*}
Thus, $\mathcal{K}(0) <\infty$, and thereby we infer from \eqref{DI:6} that 
\begin{equation}
\label{eq:reg_1}
\begin{split}
&\|\uu\|_{L^\infty(0,\infty;\dot{H}^1(\Omega))}
    + \| \nabla \mu\|_{L^\infty(0,\infty;L^2(\Omega))}
    + \| \partial_t \di\|_{L^\infty(0,\infty; L^2(\Omega))} \leq C
    \\
    &\|\uu\|_{L^2_{\rm uloc}([0,\infty);\dot{H}^2(\Omega))} + \norm{\partial_t \uu}_{L^2_{\rm uloc}([0,\infty);\dot{L}^2(\Omega))} + \norm{\partial_t \phi}_{L^2_{\rm uloc}([0,\infty);H^1(\Omega))} + \norm{\partial_t \di}_{L^2_{\rm uloc}([0,\infty);H^1(\Omega))}\leq C,
    \end{split}
\end{equation}
where the constants $C$ only depends on the norm of the initial conditions and the parameters of the system. Here, we have used the conservation mass for the term involving $\partial_t \phi$. Moreover, from \eqref{est-H^2} and \eqref{eq:reg_1}, we deduce that
\begin{equation}
\label{eq:reg_2}
\norm{\phi}_{L^\infty(0,\infty;H^2(\Omega))} + \norm{\di}_{L^\infty(0,\infty;H^2(\Omega))} \leq C.
\end{equation}
Exploiting a similar argument as the one used in \eqref{media-mu-d} to deduce \eqref{est:vv:5}, 
 we derive that $\norm{\mu}_{H^1(\Omega)} \leq C(1 + \norm{\nabla \mu}_{L^2(\Omega)})$, which in turn gives that
\begin{equation}
\label{eq:reg_3}
     \mu \in L^\infty(0,\infty; H^1(\Omega)).
\end{equation}
On account of the estimate \eqref{F_k}, which can be easily repeated in the case $\gamma=0$, and making use of \eqref{eq:reg_3}, we find 
\begin{equation}
\label{eq:reg_4}
     F'(\phi) \in L^\infty(0,\infty; L^p(\Omega)), \quad \forall \, p \in [2,\infty).
\end{equation}
Lastly, owing to \eqref{eq:reg_1} and \eqref{eq:reg_2}, we notice that $\nabla((\uu \cdot \nabla )\di) \in L^2_{\rm uloc}([0,\infty);L^2(\Omega))$, which implies by comparison that $\nabla \hh \in L^2_{\rm uloc}([0,\infty);L^2(\Omega))$. Then, observing that $(\di \cdot \nabla \phi) \nabla \phi \in L_{\rm uloc}^2([0,\infty);L^p(\Omega))$ for any $p \in [2,\infty)$, we conclude that $\di \in L^2_{\rm uloc}([0,\infty); W^{2,p}(\Omega))$ for any $p \in [2,\infty)$.
\end{proof}

\section{Numerical results}
\label{sec:numerics}

In this section, we present numerical experiments on the dynamics of the liquid-crystalline emulsion. In order to focus on the interfacial and polarization dynamics together with the anchoring mechanism, we perform numerical simulations on the system \eqref{var_phi}-\eqref{var_d} with $\uu\equiv \mathbf{0}$. More precisely, we implement a finite element approximation of \eqref{var_phi}-\eqref{var_d} through the open source \texttt{FEniCS} \cite{logg2012automated}.
The weak-discrete problem reads as follows
\begin{align}
\label{eq:CH1_num}
    &\int_\Omega \frac{\phi_{n+1} -\phi_n}{\Delta t}q \, \d x + \int_\Omega \nabla \left[\prt{1-\vartheta} \mu_{n} + \vartheta\mu_{n+1}\right]\cdot \nabla q\, \d x = 0,\\
    \label{eq:CH2_num}
    &\int_\Omega \mu_{n+1} v\, \d x -\int_\Omega f'(\phi_{n+1}) v \, \d x -\int_\Omega \varepsilon \nabla \phi_{n+1} \cdot \nabla v\, \d x + \int_\Omega \frac{\alpha}{2} \abs{\vect{d}_{n+1}}^2 v\, \d x  -\beta \int_\Omega \abs{\vect{d}_{n+1}}^2 \nabla \phi_{n+1} \cdot \nabla v\, \d x = 0,\\
    \label{eq:d1_num}
    &\int_\Omega \frac{\vect{d}_{n+1} -\vect{d}_n}{\Delta t} \cdot \vect{\xi}\, \d x +\int_\Omega \left[\prt{1-\vartheta} \vect{h}_{n} + \vartheta\vect{h}_{n+1}\right]\cdot \vect{\xi}\, \d x = 0,\\
    \label{eq:d2_num}
    &\int_\Omega \vect{h}_{n+1}\cdot \vect{l}\, \d x - \kappa \int_{\Omega} \nabla \vect{d}_{n+1} \cdot \nabla \vect{l}\, \d x -\alpha \int_\Omega \abs{\vect{d}_{n+1}}^2 \vect{d}_{n+1}\cdot \vect{l}\, \d x + \alpha \int_\Omega (\phi_{n+1}-\phi_{\rm cr})\vect{d}_{n+1}\cdot\vect{l}\, \d x\\
    \notag
    &\qquad\qquad \qquad -\beta\int_\Omega \prt{\nabla \phi_{n+1}\cdot \vect{d}_{n+1}}\prt{\nabla \phi_{n+1} \cdot \vect{l}}\, \d x=0,
\end{align}
for any $ q \in V, v\in V, \vect{\xi} \in \vect{W},\vect{l} \in \vect{W}$, where $V$ is the space of continuous polynomials of degree $1$, while the space $\vect{W}$ is the set of continuous, piece-wise quadratic polynomials. 
In the discretized problem, the notation $f_n$ stands for the function evaluated at time $t_n$. To treat the time evolution, we employ the so-called {\em $\vartheta$-method} \cite{quarteroni2009numerical}. Thus, time derivatives are approximated by a difference quotient and the right-hand side is replaced with a linear combination of the value at time $t_n$ and the value at time $t_{n+1}$. Such linear combination depends on a real parameter $\vartheta$, which is chosen $\vartheta = 0.5$ (i.e. the {\em Crank-Nicolson} time discretization).
For simplicity, we employ the fourth-order polynomial approximation $f(\phi) = \frac{\phi^2 (1- \phi)^2}{\varepsilon}$ of the Flory-Huggins potential \eqref{pot-FH} with minima translated in $0$ and $1$. The chosen value for $\phi_{\rm cr}$ is $\frac12$ to be symmetric with respect to the minima. 

We consider $\Omega=[-1,1]^2 \subset \mathbb{R}^2$ partitioned into a triangulation (mesh) of non overlapping triangular cells. The mesh denoted by $Q_\delta$ contains $6561$ nodes and $12800$ cells. Such number of elements is necessary to capture phenomena at the interface which is assumed to be of size $\varepsilon \ll 1$.
We study the benchmark example concerning the shape deformation of a drop of polymer immersed in a liquid crystal (see also \cite{shen2014decoupled, sui2021second, zhao2016numerical}). To this end, we consider the following initial data
\begin{equation}
    \label{eq:val_d_0_phi_0_2test}
    \begin{aligned}
    &\phi_0 = 1- \left[\frac{1}{2}-\frac{1}{2}\tanh\left(\frac{\sqrt{x^2 + y^2} - 0.5}{0.01}\right)\right],  &&\di_0 = (0,0.95),&&&\mu_0 = 0,&&&&\vect{h}_0 = (0,0),
    \end{aligned}
\end{equation}
where we are obliged not to set $\left(\di_0\right)_y = 1$ for avoiding stagnation or divergence of the numerical code. We show in Figure \ref{fig:t_zero} the chosen initial configuration.
\begin{figure}[htbp]
		\centering
\includegraphics[width=0.85\textwidth]{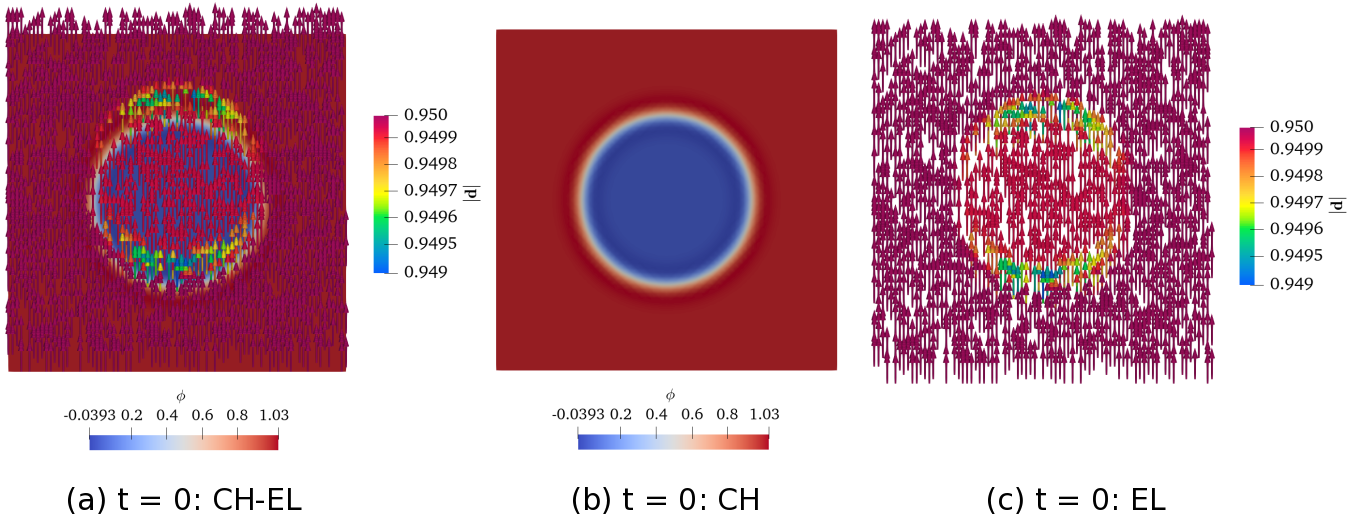}
		\caption{(a) Initial configuration for the entire discrete system CH-EL, (b) the initial concentration variable $\phi$, and (c) the initial distribution of the polarization $\di$.}
	\label{fig:t_zero}
\end{figure}

We choose the parameters of the problem to be as
\begin{align}
\label{eq:constants}
    &\varepsilon = 10^{-1},&&\alpha = 10,&&\beta = 1,&&\kappa =  10^{-1}, && \phi_{\rm cr}=\frac12.
\end{align}

\begin{figure}[h!]
\centering
\includegraphics[width=0.9\textwidth]{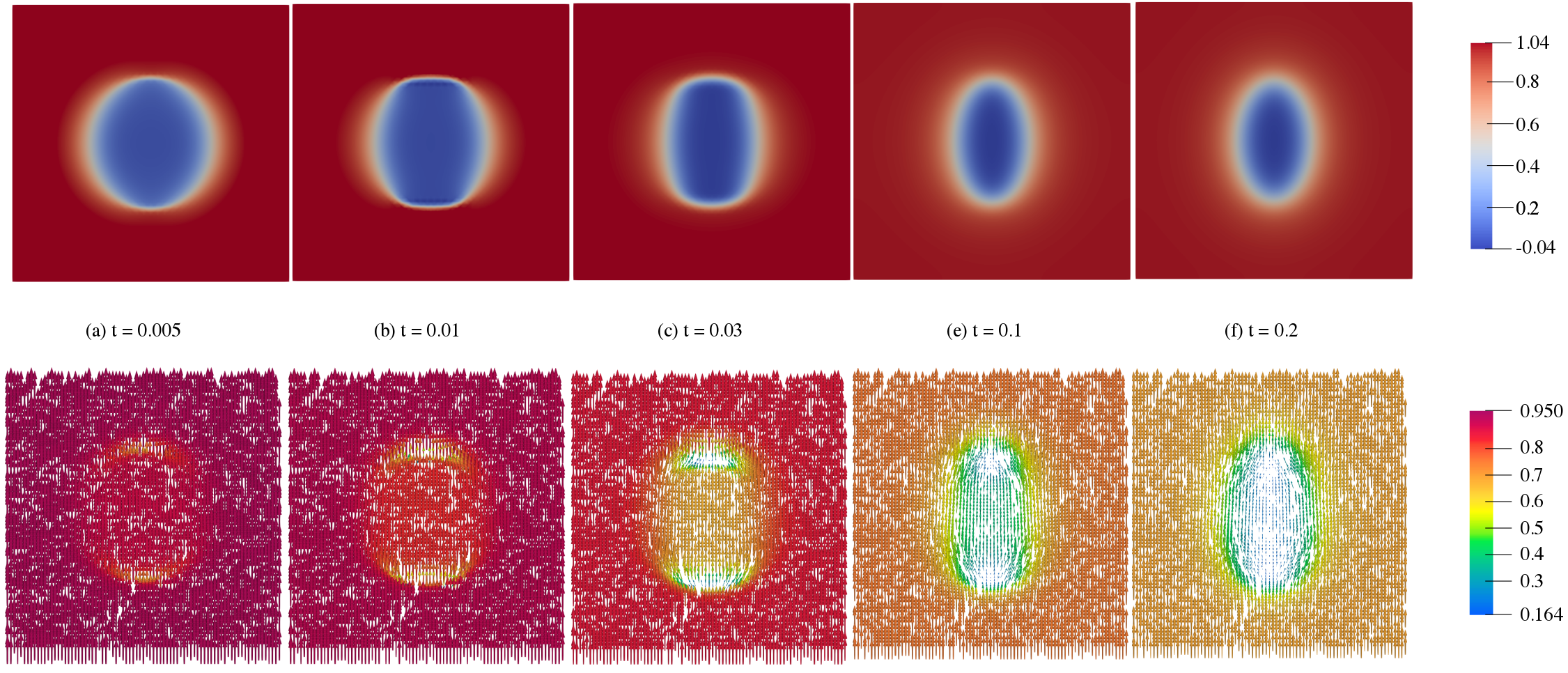}
	\caption{Different time steps of the solution of the discrete problem \eqref{eq:CH1_num}-\eqref{eq:d2_num}, having chosen as initial value for $\phi_0$ and $\di_0$ as in \eqref{eq:val_d_0_phi_0_2test} and for the constants given by \eqref{eq:constants}. First line contains the discrete concentration $\phi_n$, whereas the second line shows the magnitude of discrete polarization $|\di_n|$. 
 }
\label{fig:evolution}
\end{figure}

The discrete weak system \eqref{eq:CH1_num} - \eqref{eq:d2_num} results in a finite-dimensional quadratically nonlinear problem of total dimension 147606 degrees of freedom, of which 8281 for $\phi$ and 65522 unknowns for $\vect{d}$. We solve such a nonlinear problem with the \texttt{SNES} solver, part of the \texttt{PETSc} library \cite{petsc-user-ref}. 
To set a suitable stopping criterion, we consider the energy rate
$\mathcal{E} := \frac{E_{n+1} - E_n}{E_n},
$
where $n$ is the last time step, $n+1$ is the current time step and $E_i$ stands for the discretized total free energy 
$$
E_{i}=\int_{Q_{\delta}} \left(\frac{\varepsilon}{2} \abs{\nabla \phi_i}^2 + \frac{1}{\varepsilon}f(\phi_i) + \frac{\kappa}{2}\abs{\nabla \vect{d}_i}^2
+ \frac{\alpha}{4} \abs{\vect{d}_i}^4
-\frac{\alpha}{2}\left(\phi_i-\phi_{\rm cr}\right) \abs{\vect{d}_i}^2 + \frac{\beta}{2} \abs{\nabla \phi_i \cdot \vect{d}_i}^2\right) \, \d x.
$$
If $\mathcal{E} < 10^{-6}$ (numerical tolerance), we conclude that one of the equilibrium states has been reached.

We show in Figure \ref{fig:evolution} the dynamics at different time steps. We notice that the shape of the initial circular drop changes over time: it becomes more elongated in the $y$-direction, the one where the vector field $\di$ is initially maximum (cf. the first line of Figure \ref{fig:evolution}, which shows the discrete concentration $\phi_n$ at different time step). This fact is in accordance with the simulations obtained in \cite{sui2021second, zhao2016numerical}. In addition, 
a second interesting aspect concerns the direction of the vector field $\di$. As expected from the modeling viewpoint, the module of the polarization $|\di|$ becomes smaller inside the drop ($\phi \approx 0)$ as shown in the second line of Figure \ref{fig:evolution}. Instead, in the region where $\phi \approx 1$, the polarization $\di$ remains vertically oriented. In addition, the orientation of $\di$ is aligned in parallel direction to the diffuse interface of the drop. 
The latter behavior is more evident close to the equilibrium states as shown in Figure \ref{fig:equilibrium}. Here, we rescale the value of $\abs{\di}$ to better visualize the maximum value of the polarization in the region where $\phi \approx 1$. We observe that the value $\abs{\di} \approx 0.7$ is in accordance with the heuristics that $(\phi,\di)$ converges towards states with values equal to the critical points of 
$$
\widetilde{g}(s, w) := \frac{1}{\varepsilon}\left(s^2(1-s)^2\right) - \frac{\alpha}{2} (s - \phi_{\rm cr}) w^2 + \frac{\alpha}{4} w^4.
$$
In fact, the global minimum of $\widetilde{g}$ in the region $s>\frac12$ is approximately $w = 0.7$ for the choice of the physical parameters \eqref{eq:constants}.
\begin{figure}[h!]
\centering
\includegraphics[width=0.77\textwidth]{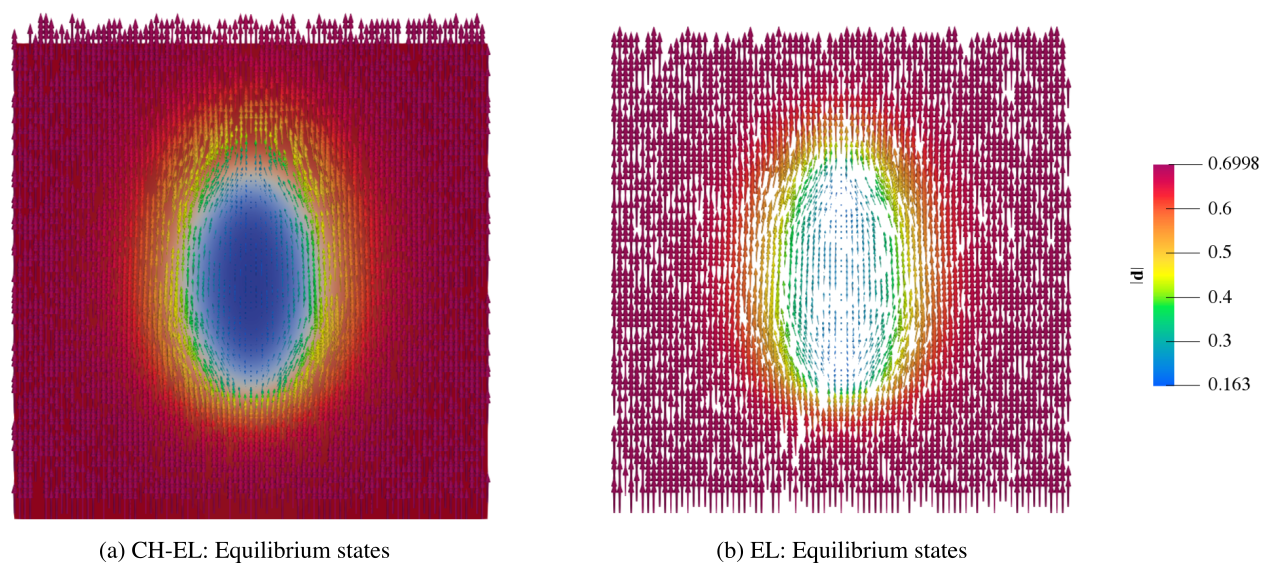}
	\caption{(a) Plot of the equilibrium states for both the discrete CH equations and discrete EL equations. (b) Visualization of the orientation of the discrete liquid crystal vector $\di$ at the equilibrium states.
 }
\label{fig:equilibrium}
\end{figure}

\begin{appendices}
\section{Weak solution to a viscous generalized Cahn-Hilliard equation}
\label{app:galerkin}
In this appendix, we show the existence of weak solutions to a viscous generalized Cahn-Hilliard equation driven by an incompressible vector field $\vv$ and an external polarization $\di$. The system reads as
\begin{equation}
\label{eq:VCH_1_app}
\left\{
\begin{aligned}
&\partial_t\phi + \vect{v} \cdot \nabla \phi = \Delta \mu,
\\
&\mu = \delta \,\partial_t \phi-\diver\left((\varepsilon + \gamma \abs{\nabla \phi}^2)\nabla \phi\right)+ \frac{1}{\varepsilon}\Psi'(\phi) -\frac{\alpha}{2}\abs{\dtilde}^2 -\diver\left(\beta (\nabla \phi \cdot \dtilde)\dtilde\right),
\end{aligned}
\right.
\quad \text{in } \Omega \times (0,T),
\end{equation}
where $T$ is a final time, which is equipped with periodic boundary conditions.  Without loss of generality, we assume that $\Psi(s) \geq 0$ for $s\in [-1,1]$, which is simply obtained by adding a suitable constant to the Flory-Huggins potential in \eqref{pot-FH}.

\begin{theorem}
\label{vp-CH:ws}
Let $\Omega= \mathbb{T}^n$, with $n=2,3$, and $T>0$. 
Assume that $\phi_0 \in W^{1,4}(\Omega)$ with $\| \phi_0\|_{L^\infty(\Omega)}\leq 1$ and $|\overline{\phi_0}|<1$. Given $\vv \in C([0,T];\vect{V}_m)$ and $\dtilde \in C([0,T];L^2(\Omega))\cap L^\infty(\Omega \times (0,T)) \cap L^2(0,T;W^{1,4}(\Omega))$ 
such that
\begin{equation}
    \label{App:key-ass}
    \| \dtilde\|_{L^\infty(\Omega \times (0,T))}\leq D_\infty, 
\end{equation}
there exists a weak solution $(\phi,\mu )$ to \eqref{eq:VCH_1_app} on $[0,T]$
such that 
\begin{align}
\label{A:WS:R1}
    & \phi \in L^\infty(0,T;W^{1,4}(\Omega))\cap L^2(0,T;H^2(\Omega)),\\
    \label{A:WS:R2}
    &\phi \in L^\infty(\Omega \times (0,T)) \, \text{with }
    |\phi(x,t)|<1 \, \text{a.e. in } \Omega \times (0,T),\\
    \label{A:WS:R3}
    &\partial_t \phi \in L^2(0,T; L^2(\Omega)), \quad 
    |\nabla \phi|^2 \nabla \phi \in L^2(0,T; H^1(\Omega)),
    \quad F'(\phi) \in L^2(0,T;L^2(\Omega)),\\
    \label{A:WS:R4}
    &\mu \in L^2(0,T;H^1(\Omega)),
\end{align}
which satisfies 
\begin{align}
\label{A:WS:1}
    &( \partial_t \phi, \xi )
    - (\phi \vv, \nabla \xi)
    + (\nabla \mu, \nabla \xi)
    =0, \quad \forall \, \xi \in H^1(\Omega), \text{ a.e. in }(0,T),
    \\
\label{A:WS:2}    
    &\mu=
    \delta \partial_t \phi
    - \diver \left( (\varepsilon+ \gamma |\nabla \phi|^2) \nabla \phi \right) 
    + \frac{1}{\varepsilon} \Psi'(\phi)
    -\frac{\alpha}{2}  |\dtilde|^2 
    - \beta \diver \left( (\nabla \phi \cdot \dtilde) \dtilde \right), \quad \text{a.e. in } \Omega \times (0,T),
\end{align}
as well as $\phi(\cdot,0)=\phi_0$ in $\Omega$. 
Furthermore, any weak solution fulfills 
\begin{equation}
\label{A:WS_satisfies}
\begin{split}
&\esssup_{t\in [0,T]}
\int_\Omega \abs{\nabla \phi(t)}^4 + \abs{\nabla \phi(t)}^2 + \Psi(\phi(t))\, \d x
+  \int_0^T \| \nabla \mu(s)\|_{L^2(\Omega)}^2 \, \d s
\\
& \quad 
+\int_0^T \| \partial_t \phi(s)\|_{L^2(\Omega)}^2 \, \d s
+  \int_0^T \| \Delta \phi(s)\|_{L^2(\Omega)}^2 \, \d s
+ \int_0^T \int_\Omega
\left| \nabla\left(\abs{\nabla \phi}^2\right) \right|^2 \, \d x
\\
&\leq
 \overline{C} \left(\int_\Omega \abs{\nabla \phi_0}^4 +  \abs{\nabla \phi_0}^2 + \Psi(\phi_0)\, \d x
+  \int_0^T \|\nabla \dtilde(s) \|_{L^4(\Omega)}^2 \, \d s + T\right)
\\
& \quad 
\times \mathrm{exp}\left(\overline{C}T + \overline{C}\int_0^T \| \nabla \dtilde(s)\|_{L^4(\Omega)}^2 + \|\vv(s) \|_{L^2(\Omega)}^2 \, \d s \right),
\end{split}
\end{equation}
where the positive constant $\overline{C}$ depends on $\alpha, \beta, \gamma, \delta, \varepsilon, \Theta_0, n, m, \overline{\phi_0}, D_\infty$, but is independent of $\dtilde$ and $\vv$.
\end{theorem}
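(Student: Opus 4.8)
The plan is to construct the solution by a \emph{double approximation}: an inner Faedo--Galerkin discretization in the Fourier basis of $-\Delta$ on $\mathbb{T}^n$, and an outer regularization of the singular part of the potential. Writing $\Psi=F-\tfrac{\Theta_0}{2}s^2$ with $F$ convex and singular at $\pm1$, I would first replace $F$ by a family of $C^2$ convex approximations $F_\lambda$ defined on all of $\R$ (e.g.\ by truncating $F''$ at height $1/\lambda$), so that $F_\lambda'$ is globally Lipschitz, $F_\lambda'\to F'$ pointwise and monotonically, and $F_\lambda\le F$. On the span $\vect{V}_M$ of the first $M$ Fourier modes I seek $\phi^{M,\lambda}=\sum_k a_k(t)w_k$ solving the projection of \eqref{A:WS:1}--\eqref{A:WS:2}. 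Thanks to the viscous term $\delta\,\partial_t\phi$, the projected chemical-potential relation can be inverted to express $\partial_t\phi^{M,\lambda}$ as a locally Lipschitz function of the coefficients, so the scheme reduces to an ODE system; Cauchy--Lipschitz gives local existence, and the uniform estimates below extend the solution to $[0,T]$.

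The core of the argument is a pair of \emph{uniform} a priori estimates, both mirroring computations already performed in the main body. \textbf{(i)~Energy estimate.} Testing the first equation by $\mu$ and the chemical-potential relation by $\partial_t\phi$ and adding, the convective contribution $(\vv\cdot\nabla\phi,\mu)$ is rewritten as $-(\phi\,\vv,\nabla\mu)$ and absorbed via $\norm{\vv}_{L^\infty(\Omega)}\le C_m\norm{\vv}_{L^2(\Omega)}$ from \eqref{Rev-SI}, while the $\dtilde$-couplings are controlled using the $L^\infty$-bound \eqref{App:key-ass}; Gronwall then bounds $\sup_t E^\lambda_{\rm conv}(\phi)$, the dissipation $\delta\!\int\norm{\partial_t\phi}^2$ and $\int\norm{\nabla\mu}^2$. \textbf{(ii)~Higher-order estimate.} Testing the chemical-potential relation by $-\Delta\phi$ exactly as in \eqref{est:vv:7}--\eqref{est:vv:13} produces the good terms $\varepsilon\norm{\Delta\phi}^2$, $\gamma\!\int\abs{D^2\phi}^2\abs{\nabla\phi}^2$ and $\tfrac{\gamma}{4}\!\int\abs{\nabla(\abs{\nabla\phi}^2)}^2$, together with the nonnegative anchoring contribution $\beta\!\int\abs{\nabla(\nabla\phi\cdot\dtilde)}^2$. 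The decisive observation is that the remaining anchoring cross-terms are absorbed by the $\gamma$-weighted Hessian term; for instance,
\[
\beta\left|\int_\Omega(\nabla\phi\cdot\dtilde)\,\nabla\dtilde:D^2\phi\,\d x\right|
\le\frac{\gamma}{4}\int_\Omega\abs{D^2\phi}^2\abs{\nabla\phi}^2\,\d x
+\frac{\beta^2 C(n)}{\gamma}\,D_\infty^2\,\norm{\nabla\dtilde}_{L^2(\Omega)}^2,
\]
and the analogous term is handled by $\varepsilon\norm{\Delta\phi}^2$ at the price of $\norm{\nabla\dtilde}_{L^4(\Omega)}^2$, which is precisely the regularity granted to $\dtilde$. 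Collecting (i) and (ii) yields \eqref{A:WS_satisfies}.

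To close the estimates and recover the constraint, I would test the chemical-potential relation against the truncation $F_k'(\phi)$ as in \eqref{F_k}, exploiting $F''\ge0$ and $F'(s)s\ge0$, to obtain a uniform $L^2(0,T;L^2(\Omega))$ bound on $F_\lambda'(\phi)$; integrating the relation over $\Omega$ and controlling $\norm{F'(\phi)}_{L^1(\Omega)}$ by the Miranville--Zelik inequality fixes the mean $\overline{\mu}$ as in \eqref{media-mu-d}--\eqref{est:vv:5}, upgrading $\norm{\nabla\mu}_{L^2}$ to the full $L^2(0,T;H^1(\Omega))$ bound. With all bounds uniform in $M$ and $\lambda$, I first send $M\to\infty$ by Aubin--Lions compactness ($\phi$ bounded in $L^2(H^2)\cap L^\infty(W^{1,4})$, $\partial_t\phi$ bounded in $L^2(L^2)$), identifying the monotone operator $-\diver((\varepsilon+\gamma\abs{\nabla\phi}^2)\nabla\phi)$ through strong convergence of $\nabla\phi$ (or Minty's trick); then I let $\lambda\to0$, where the uniform $F_\lambda'$-bound together with the Fatou lemma forces $\phi\in L^\infty(\Omega\times(0,T))$ with $\abs{\phi}<1$ a.e.\ (because $F'$ blows up at $\pm1$), and a standard maximal-monotonicity argument gives $F_\lambda'(\phi^\lambda)\rightharpoonup F'(\phi)$, yielding \eqref{A:WS:1}--\eqref{A:WS:2}.

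I expect the main obstacle to be the higher-order estimate~(ii): the anchoring term $-\beta\diver((\nabla\phi\cdot\dtilde)\dtilde)$ has the same differential order as $\varepsilon\Delta\phi$, so tested against $-\Delta\phi$ it generates products of two second derivatives of $\phi$ that cannot be controlled by the $\varepsilon$- and energy-terms alone. It is exactly the artificial $\gamma$-regularization that supplies the weighted coercive quantity $\gamma\!\int\abs{D^2\phi}^2\abs{\nabla\phi}^2$ needed to absorb these terms, and this is what makes the estimate close uniformly. By comparison, the passages to the limit and the recovery of the singular potential and of $\abs{\phi}<1$ are delicate but follow well-established lines for Cahn--Hilliard equations with logarithmic potentials.
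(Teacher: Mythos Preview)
Your proposal is correct and follows essentially the same route as the paper: regularize the singular potential by $F_\lambda$, run a Fourier--Galerkin scheme, derive the energy estimate by testing with $\mu$ and the higher-order estimate by testing the chemical-potential relation with $-\Delta\phi$ (absorbing the anchoring cross-terms into the $\gamma$-weighted Hessian term exactly as you indicate), control $\overline{\mu}$ via the Miranville--Zelik inequality and $F_\lambda'(\phi)$ in $L^2$ by testing against itself, then pass to the limits $M\to\infty$ and $\lambda\to0$ with Aubin--Lions and Fatou.

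There is one technical gap you should be aware of. In the Galerkin step you need a uniform-in-$M$ bound on the initial energy, in particular on $\|\nabla(\mathbb{P}^M\phi_0)\|_{L^4(\Omega)}$. But the spectral (spherical) Fourier projection is \emph{not} bounded on $W^{1,4}(\mathbb{T}^n)$ for $n\ge 2$, so from $\phi_0\in W^{1,4}(\Omega)$ alone you cannot conclude $\sup_M\|\mathbb{P}^M\phi_0\|_{W^{1,4}}<\infty$. The paper handles this by an extra, outermost approximation layer: it first proves the result assuming $\phi_0\in H^2(\Omega)$ (so that $\|\mathbb{P}^M\phi_0\|_{W^{1,4}}\le C_S\|\mathbb{P}^M\phi_0\|_{H^2}\le C_S\|\phi_0\|_{H^2}$), obtains the estimate \eqref{A:WS_satisfies} whose right-hand side depends on $\phi_0$ only through $\|\nabla\phi_0\|_{L^4}$, $\|\nabla\phi_0\|_{L^2}$, $\|\Psi(\phi_0)\|_{L^1}$ and $\overline{\phi_0}$, and then removes the extra regularity by mollifying a general $\phi_0\in W^{1,4}$ to $\phi_0^\eta\in H^2$ and passing $\eta\to0$. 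You should add this step to make the scheme close for the stated class of initial data.
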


\begin{proof}
Let us first assume that the initial datum $\phi_0 \in H^2(\Omega)$
such that $\| \phi_0\|_{L^\infty(\Omega)}\leq 1$ and $|\overline{\phi_0}|<1$.
For any $\lambda \in (0,1)$, we introduce a family of regular potential $\{\Psi_\lambda\}_{\lambda \in (0,1)}$ that approximates the Flory-Huggins potential $\Psi$ by defining
\begin{equation}
\label{Psi-L}
\Psi_\lambda(s) = F_\lambda(s) - {\color{black}\frac{\Theta_0}{2} s^2}, \quad \forall\, s \in \R  
\end{equation}
where 
\begin{equation}
\label{F-L}
F_\lambda(s) 
=
\begin{cases}
\sum_{j = 0}^2 \frac{1}{j!} F^{(j)}(1- \lambda) [s- (1- \lambda)]^j, &\forall\, s \geq 1- \lambda,\\[5pt]
F(s),&\forall\, s \in [-1+\lambda, 1-\lambda],\\[5pt]
\sum_{j = 0}^ 2 \frac{1}{j!} F^{(j)}(-1+ \lambda) {\color{black} [s - (-1 + \lambda)]^j}, & \forall\, s \leq -1 + \lambda.
\end{cases}
\end{equation}
Then, we approximate the problem \eqref{eq:VCH_1_app} by means of
\begin{empheq}[left=\empheqlbrace]{align}
\label{eq:VCH_1_app_eps}
&\partial_t\phi_\lambda + \vect{v} \cdot \nabla \phi_\lambda = \Delta \mu_\lambda,\\
\label{eq:VCH_2_app_eps}
    &\mu_\lambda = \delta \,\partial_t \phi_\lambda-\diver\left((\varepsilon + \gamma \abs{\nabla \phi_\lambda}^2)\nabla \phi_\lambda\right)+ \frac{1}{\varepsilon}\Psi_\lambda'(\phi_\lambda) -\frac{\alpha}{2}\abs{\dtilde}^2 -\diver\left(\beta (\nabla \phi_\lambda \cdot \dtilde)\dtilde\right).
\end{empheq}
We now introduce the Galerkin approximation. For $k \in \mathbb{N}$, we look for a pair of functions
\begin{equation}
    \label{eq:phi_gal_VpCH}
    \phi^k_\lambda(x,t) = \sum_{i = 1}^k a_i^k(t) \xi_i(x), 
    \qquad \mu_\lambda^k=\sum_{i = 1}^k b_i^k(t) \xi_i(x) \qquad \text{in } \Omega \times (0,T), 
\end{equation}
where $\xi_i$ are the eigenfunctions of the Laplacian operator $-\Delta$ and $V_k= \text{span} \lbrace \xi_1, \dots, \xi_k \rbrace$, which solves 
\begin{equation}
\label{eq:gal_VCH_1}
\prt{\partial_t \phi^k_\lambda, \xi_i} 
+ \prt{\vect{v} \cdot \nabla \phi^k_\lambda, \xi_i} = \prt{\mu^k_\lambda, \Delta \xi_i},
\end{equation}
and
\begin{equation}
\label{eq:gal_VCH_2}
\begin{split}
\prt{\mu^k_{\lambda}, \xi_i}&= 
\delta \left( \partial_t \phi^k_\lambda, \xi_i \right) 
+ \left( \left(\varepsilon + \gamma \abs{\nabla \phi^k_\lambda}^2\right)\nabla \phi^k_\lambda, \nabla \xi_i \right) 
+ \frac{1}{\varepsilon}\left(\Psi'_\lambda(\phi^k_\lambda), \xi_i \right) \\
&\quad
-\frac{\alpha}{2} \left( \abs{\dtilde}^2, \xi_i\right)
+\beta \left( (\nabla \phi^k_\lambda \cdot \dtilde)\dtilde, \nabla \xi_i \right),
\end{split}
\end{equation}
for all $i =1, \dots, k$. The system \eqref{eq:gal_VCH_1}-\eqref{eq:gal_VCH_2}
can be rewritten as the following system of ODEs 
\begin{equation}
\label{eq:CH_galerkin}
\left\{
\begin{aligned}
&\prt{1+\delta\mathbb{S}^k}  \frac{{\rm d} }{{\rm d}t} \mathbb{A}^k(t) + \mathbb{L}^k(t) \mathbb{A}^k (t) + \mathbb{G}^k(\mathbb{A}^k(t)) = \mathbb{F}^k(t),
  \\
  &\mathbb{A}^k(0)=\left( (\phi_0,\xi_1), \dots, (\phi_0,\xi_k) \right),
\end{aligned}\right.
\end{equation}
where
$$
\begin{aligned}
\mathbb{A}^k(t) &:= \left[a_1(t),\dots, a_k(t)\right]^{T},
\qquad \qquad 
\mathbb{S}^k := {\rm diag}\left[\lambda_1, \dots,\lambda_k\right],
\\
\mathbb{F}^k(t)&:=\left[ \prt{-\frac{\alpha}{2} \abs{\dtilde(t)}^2,\lambda_1 \xi_1},\dots, \prt{-\frac{\alpha}{2} \abs{\dtilde(t)}^2,\lambda_k\xi_k}\right],\\
\prt{\mathbb{L}^k(t)}_{j,k} &:= \int_\Omega \prt{\vect{v}(t)\cdot \nabla \xi_k}\xi_j\, \d x -\beta \int_\Omega \lambda_j \prt{\nabla \xi_k \cdot \dtilde(t)} \,\prt{\dtilde(t) \cdot \nabla \xi_j}\, \d x + \varepsilon\, \lambda_j^2 \delta_{j,k} 
-\frac{\Theta_0}{\varepsilon}\lambda_j \delta_{j,k}  , \\
\mathbb{G}^k(\mathbb{A}^k(t)) &:= \left[-\frac{1}{\varepsilon}\int_\Omega\lambda_1 \xi_1\, F_\lambda'\prt{\sum_{i=1}^k a_i(t) \xi_i}\, \d x,\dots, -\frac{1}{\varepsilon}\int_\Omega\lambda_k \xi_k\, F_\lambda'\prt{\sum_{i=1}^k a_i(t) \xi_i}\, \d x\right]^{T}\\
&\quad+ \left[\gamma \sum_{i=1}^k a_i(t) \int_\Omega\lambda_1  \abs{\sum_{l=1}^k a_l(t) \nabla \xi_l}^2 \nabla \xi_i\cdot \nabla \xi_1\, \d x,\dots, \gamma \sum_{i=1}^k a_i(t) \int_\Omega\lambda_k  \abs{\sum_{l=1}^k a_l(t) \nabla \xi_l}^2 \nabla \xi_i\cdot \nabla \xi_k\, \d x\right]^{T},
\end{aligned}
$$
where $\delta_{j,k}$ is the Kronecker delta.
Since $\vv \in C([0,T];\vect{V}_m)$ and $\vect{\dtilde}\in C([0,T];L^2(\Omega))$, it follows that $\mathbb{L}^k \in C([0,T]; \mathbb{R}^{k\times k})$ and $\mathbb{F}^k \in C([0,T];\mathbb{R}^k)$. By using \cite[Lemma 3.1]{giorgini2018cahn}, we get that the nonlinear part $\mathbb{G}^k(\mathbb{A}^k(t))$ is locally Lipschitz in $\mathbb{A}^k$. Then, we deduce that \eqref{eq:CH_galerkin} has a unique solution $\mathbb{A}^k(\cdot) \in C^1([0,T_0);\mathbb{R}^k)$ for some $T_0$ (possibly smaller than $T$).

Next, in order to show that $\mathbb{A}^k(t)$ is well defined on $[0,T]$ and to pass to the limit as $k\to \infty$, we carry out several uniform estimates in the approximation parameters ($k$ and $\lambda$). 
We start by observing that \eqref{eq:gal_VCH_1} with $i=1$ ($\xi_1=\frac{1}{(2\pi)^n}$) gives
\begin{equation}
\label{gal:cm}
\dt \int_\Omega \phi_\lambda^k(t) \, \d x 
=0 \quad \Rightarrow \quad
\overline{\phi_\lambda^k(t)}=
\frac{1}{|\Omega|}(\phi_0,1)
=\overline{\phi_0},\quad \forall \, t \in [0,T_0).
\end{equation}
We multiply \eqref{eq:gal_VCH_1} 
by $\mu_\lambda^k$. Observing that $\partial_t \phi_\lambda^k(t)\in V_m$, we find 
\begin{equation}
    \label{eq:stima_mu_n}
    \begin{aligned}
    &\dt \left[\int_\Omega \frac{\gamma}{4}\abs{\nabla \phi_\lambda^k}^4 + \frac{\varepsilon}{2}\abs{\nabla \phi_\lambda^k}^2 + \frac{1}{\varepsilon}  \Psi_\lambda(\phi_\lambda^k) \, \d x\right]
    +\norm{\nabla \mu_\lambda^k}_{L^2(\Omega)}^2
    + \delta \norm{\partial_t \phi_\lambda^k}_{L^2(\Omega)}^2\\
    &\quad 
    =\int_\Omega (\phi_\lambda^k -\overline{\phi_\lambda^k}) \vv \cdot \nabla \mu_\lambda^k\, \d x 
    +\frac{\alpha}{2}\int_\Omega \abs{\dtilde}^2 \partial_t \phi_\lambda^k\, \d x 
    + \beta \int_\Omega \diver\prt{(\nabla \phi_\lambda^k \cdot \dtilde)\dtilde}\partial_t \phi_\lambda^k \, \d x.
    \end{aligned}
\end{equation}
By using the Sobolev embedding, \eqref{H2:equiv}, \eqref{Rev-SI} and \eqref{App:key-ass}, we deduce that
\begin{equation}
\begin{split}
&
\left| \int_\Omega (\phi_\lambda^k-\overline{\phi_\lambda^k}) \vv  \cdot \nabla \mu_\lambda^k\, \d x 
    +\frac{\alpha}{2}\int_\Omega \abs{\dtilde}^2 \partial_t \phi_\lambda^k\, \d x 
    + \beta \int_\Omega \diver\prt{(\nabla \phi_\lambda^k \cdot \dtilde)\dtilde}\partial_t \phi_\lambda^k \, \d x \right|
 \\
& \quad
\leq \| \phi_\lambda^k-\overline{\phi_\lambda^k}\|_{L^6(\Omega)}
\| \vv\|_{L^3(\Omega)}
\| \nabla \mu_\lambda^k\|_{L^2(\Omega)}
+ \frac{\alpha}{2} \left\| \left|\dtilde\right|^2\right\|_{L^2(\Omega)}
\| \partial_t \phi_\lambda^k\|_{L^2(\Omega)}
\\
& \qquad
+ n^2\beta 
\| D^2 \phi_\lambda^k\|_{L^2(\Omega)}
\| \dtilde\|_{L^\infty(\Omega)}^2 
\|\partial_t \phi_\lambda^k \|_{L^2(\Omega)}
+ 2 n \beta \| \nabla \phi_\lambda^k \|_{L^4(\Omega)} \| \nabla \dtilde\|_{L^4(\Omega)}
\| \dtilde\|_{L^\infty(\Omega)} 
\| \partial_t \phi_\lambda^k\|_{L^2(\Omega)}
 \\
&\quad 
\leq 
\frac14 \| \nabla \mu_\lambda^k\|_{L^2(\Omega)}^2 
+ C(m)\| \vv\|_{L^2(\Omega)}^2\| \nabla \phi^k_\lambda\|_{L^2(\Omega)}^2
 + \frac{\delta}{4} \norm{\partial_t\phi_\lambda^k}_{L^2(\Omega)}^2 
+ C(\alpha, \delta, n) D_\infty^4
 \\
&\qquad
+C(\beta, \delta, n) D_\infty^4 \left( |\overline{\phi_0}|^2 + \|\Delta \phi_\lambda^k\|_{L^2(\Omega)}^2\right)  
+ C(n)D_\infty^2 \| \nabla \dtilde\|_{L^4(\Omega)}^2 
\norm{\nabla \phi_\lambda^k}_{L^4(\Omega)}^2
 \\
&\quad \leq \frac{1}{4}\norm{ \nabla \mu_\lambda^k}_{L^2(\Omega)}^2 
+ \frac{\delta}{4} \norm{\partial_t\phi_\lambda^k}_{L^2(\Omega)}^2
+ C_1 \left( \| \nabla \dtilde\|_{L^4(\Omega)}^2
+\norm{\vv}_{L^2(\Omega)}^2 \right)
\left( \norm{\nabla \phi_\lambda^k}_{L^4(\Omega)}^4
+ \norm{\nabla \phi_\lambda^k}_{L^2(\Omega)}^2 \right) \\
&\qquad
+ C_2 \norm{\Delta \phi_\lambda^k}_{L^2(\Omega)}^2 
+ C_3 \| \nabla \dtilde\|_{L^4(\Omega)}^2 + C_4,
\end{split}
\end{equation}
where the constants $C_1, \dots, C_4$ depend on $\alpha, \beta, \gamma, \delta, \varepsilon, n, m, \overline{\phi_0}, D_\infty$, but are independent of $k$ and $\lambda$. Hence, we arrive at
\begin{equation}
\label{A:est0}
    \begin{split}
    &\dt \left[\int_\Omega \frac{\gamma}{4}\abs{\nabla \phi_\lambda^k}^4 + \frac{\varepsilon}{2}\abs{\nabla \phi_\lambda^k}^2 + \frac{1}{\varepsilon} \Psi_\lambda(\phi_\lambda^k)\, \d x\right]
    +\frac{3}{4}\norm{\nabla \mu_\lambda^k}_{L^2(\Omega)}^2 + \frac{3\delta}{4} \norm{\partial_t \phi_\lambda^k}_{L^2(\Omega)}^2
    \\
    &\quad \leq 
   C_1 \left( \| \nabla \dtilde\|_{L^4(\Omega)}^2 
   + \| \vv\|_{L^2(\Omega)}^2 \right)\left[\int_\Omega \frac{\gamma}{4}\abs{\nabla \phi_\lambda^k}^4 + \frac{\varepsilon}{2}\abs{\nabla \phi_\lambda^k}^2 + \frac{1}{\varepsilon} \Psi_\lambda(\phi_\lambda^k)\, \d x\right]
   \\
   &\qquad 
    + C_2 \| \Delta \phi_\lambda^k\|_{L^2(\Omega)}^2 
    + C_3 \| \nabla \dtilde\|_{L^4(\Omega)}^2 + C_4.
    \end{split}
\end{equation}
Multiplying \eqref{eq:gal_VCH_2} by
$\lambda_i a_i$ and summing over $i$, we have
\begin{equation}
\label{A:est00}
\begin{split}
&
\frac{\delta}{2}\dt \| \nabla \phi^k_\lambda\|_{L^2(\Omega)}^2
+\varepsilon\norm{\Delta \phi_\lambda^k}_{L^2(\Omega)}^2 + \gamma \int_\Omega \diver \left( |\nabla \phi_\lambda^k |^2 \nabla \phi_\lambda^k \right) \Delta \phi_\lambda^k \, \d x + \underbrace{\frac{1}{\varepsilon} \int_\Omega F_{\lambda}''(\phi_\lambda^k)
\abs{\nabla \phi_\lambda^k}^2\, \d x}_{\geq 0}
\notag \\
&
= \int_\Omega \nabla \mu^k_\lambda \cdot \nabla \phi^k_\lambda \, \d x
+ \frac{\Theta_0}{\varepsilon} 
\int_{\Omega} |\nabla \phi_\lambda^k|^2 \, \d x
+\frac{\alpha}{2}\int_\Omega \Delta \phi_\lambda^k \abs{\dtilde}^2\, \d x -
\beta \int_\Omega \diver\prt{(\nabla \phi_\lambda^k \cdot \dtilde)\dtilde} \Delta \phi_\lambda^k \, \d x.
\end{split}
\end{equation}
Integrating by parts, the second term on the left-hand side is rewritten as
\begin{equation*}
\begin{split}
    \int_\Omega \diver\left(\abs{\nabla \phi_\lambda^k}^2 \nabla \phi_\lambda^k\right)\Delta \phi_\lambda^k\, \d x  &= \int_{\Omega} \partial_i\left(\abs{\nabla \phi_\lambda^k}^2 \partial_i\phi_\lambda^k\right)\partial_{jj}\phi_\lambda^k \, \d x
   \\ 
   &= \int_{\Omega} \partial_j(\abs{\nabla \phi_\lambda^k}^2 \partial_i \phi_\lambda^k) \partial_j\partial_i \phi_\lambda^k \, \d x
    \\
   &=\int_{\Omega} \left(\partial_j \partial_i\phi_\lambda^k \partial_j \partial_i \phi_\lambda^k \abs{\nabla \phi_\lambda^k}^2 + 2 \partial_j\partial_i \phi_\lambda^k \partial_i \phi_\lambda^k \partial_j\partial_l\phi_\lambda^k \partial_l\phi_\lambda^k \right)\, \d x 
    \\
   &=  \int_{\Omega} \abs{D^2 \phi_\lambda^k}^2 \abs{\nabla \phi_\lambda^k}^2 + \frac{1}{4} \left| \nabla\left(\abs{\nabla \phi_\lambda^k}^2\right) \right|^2 \, \d x.
   \label{Two:der:est}
   \end{split}
\end{equation*}
Also, we observe that
\begin{equation*}
\begin{split}
     \int_\Omega \diver\prt{(\nabla \phi_\lambda^k \cdot \dtilde)\dtilde} \Delta \phi_\lambda^k \, \d x 
    &=  \int_\Omega \partial_l \left( (\nabla \phi_\lambda^k \cdot \dtilde)\dtilde_j \right) \partial_l \partial_j \phi_\lambda^k \, \d x
    \\
    &= \int_\Omega \partial_l (\nabla \phi_\lambda^k \cdot \dtilde) \dtilde_j \partial_l \partial_j \phi_\lambda^k \, \d x
    + \int_\Omega (\nabla \phi_\lambda^k \cdot \dtilde) \partial_l \dtilde_j \partial_l \partial_j \phi_\lambda^k \, \d x
    \\
    &= \underbrace{\int_\Omega |\nabla(\nabla \phi_\lambda^k \cdot \dtilde)|^2 \, \d x}_{\geq 0}
    +  \int_\Omega (\nabla \phi_\lambda^k \cdot \dtilde) \nabla \dtilde : D^2 \phi^k_\lambda \, \d x
    -  \int_\Omega \nabla(\nabla \phi_\lambda^k \cdot \dtilde) \cdot (\nabla \dtilde^T \nabla \phi^k_\lambda) \, \d x.
    \end{split}
\end{equation*}
Then, we reach
\begin{equation}
\label{A:est1}
\begin{split}
& \frac{\delta}{2}\dt \| \nabla \phi^k_\lambda\|_{L^2(\Omega)}^2
+\varepsilon\norm{\Delta \phi_\lambda^k}_{L^2(\Omega)}^2 + \gamma 
\int_{\Omega} \abs{D^2 \phi_\lambda^k}^2 \abs{\nabla \phi_\lambda^k}^2 \, \d x
+\frac{\gamma}{4} \int_\Omega \left| \nabla\left(\abs{\nabla \phi_\lambda^k}^2\right) \right|^2 \, \d x
\\
&\leq 
\frac{\varpi}{4} \| \nabla \mu^k_\lambda\|_{L^2(\Omega)}^2 
+ \left( \frac{\Theta_0}{\varepsilon}+\frac{1}{\varpi}\right)\|\nabla \phi_\lambda^k\|_{L^2(\Omega)}^2 
+\frac{\alpha}{2} \left| \int_\Omega \Delta \phi_\lambda^k \abs{\dtilde}^2\, \d x \right|  + \left|\int_\Omega (\nabla \phi_\lambda^k \cdot \dtilde) \nabla \dtilde : D^2 \phi^k_\lambda \, \d x \right|
\\
&\quad + \left| \int_\Omega \nabla(\nabla \phi_\lambda^k \cdot \dtilde) \cdot (\nabla \dtilde \nabla \phi^k_\lambda) \, \d x\right|,
\end{split}
\end{equation}
where the positive parameter $\varpi$ will be determined later on.
It is easily seen that 
\begin{align}
\label{A:est2-2}
\left| \frac{\alpha}{2}\int_\Omega \Delta \phi_\lambda^k \abs{\dtilde}^2\, \d x \right| 
&\leq 
\frac{\varepsilon}{2}
\| \Delta \phi_\lambda^k\|_{L^2(\Omega)}^2 
+ \frac{\alpha^2 \abs{\Omega} C(n)}{8 \varepsilon} \| \dtilde\|_{L^\infty(\Omega)}^4
\leq \frac{\varepsilon}{2}
\| \Delta \phi_\lambda^k\|_{L^2(\Omega)}^2 
+ \frac{\alpha^2 \abs{\Omega} C(n)}{8 \varepsilon} D_\infty^4,
\end{align}
\begin{equation}
\label{A:est2-3}
    \begin{split}
       \left| \beta \int_\Omega (\nabla \phi_\lambda^k \cdot \dtilde) \nabla \dtilde : D^2 \phi^k_\lambda \, \d x \right|
       &\leq \frac{\gamma}{4} \int_\Omega \abs{D^2 \phi_\lambda^k}^2 \abs{\nabla \phi_\lambda^k}^2 \, \d x
       + \frac{\beta^2 C(n)}{\gamma} D_\infty^2 \| \nabla \dtilde\|_{L^2(\Omega)}^2 
    \end{split}
\end{equation}
and
\begin{equation}
\label{A:est2-4}
    \begin{split}
         \left| \beta  \int_\Omega \nabla(\nabla \phi_\lambda^k \cdot \dtilde) \cdot (\nabla \dtilde^T \nabla \phi^k_\lambda) \, \d x \right| 
         &=
         \beta \left| \int_\Omega \partial_l \partial_i \phi^k_\lambda \dtilde_i \partial_l \dtilde_j \partial_j \phi^k_\lambda 
         + \partial_i \phi^k_\lambda \partial_l \dtilde_i \partial_l \dtilde_j \partial_j \phi^k_\lambda \, \d x
         \right|
         \\
         &\leq \frac{\gamma}{4} \int_\Omega \abs{D^2 \phi_\lambda^k}^2 \abs{\nabla \phi_\lambda^k}^2 \, \d x
       + \frac{\beta^2 C(n)}{\gamma} D_\infty^2 \| \nabla \dtilde\|_{L^2(\Omega)}^2 
       \\
       &\quad + \beta \| \nabla \dtilde\|_{L^4(\Omega)}^2 \| \nabla \phi^k_\lambda\|_{L^4(\Omega)}^2.
    \end{split}
\end{equation}
Thus, we infer that 
\begin{equation}
\label{A:est2}
\begin{split}
& 
 \frac{\delta}{2}\dt \| \nabla \phi^k_\lambda\|_{L^2(\Omega)}^2
+\frac{\varepsilon}{2} \norm{\Delta \phi_\lambda^k}_{L^2(\Omega)}^2 + \frac{\gamma}{2} 
\int_{\Omega} \abs{D^2 \phi_\lambda^k}^2 \abs{\nabla \phi_\lambda^k}^2 \, \d x
+\frac{\gamma}{4} \int_\Omega \left| \nabla\left(\abs{\nabla \phi_\lambda^k}^2\right) \right|^2 \, \d x
\\
&\leq 
\frac{\varpi}{4} \| \nabla \mu^k_\lambda\|_{L^2(\Omega)}^2
+ \left( \frac{\Theta_0}{\varepsilon}+\frac{1}{\varpi}\right) \|\nabla \phi_\lambda^k\|_{L^2(\Omega)}^2 
+C_5 \| \nabla \dtilde\|_{L^4(\Omega)}^2 
+C_6 \| \nabla \dtilde\|_{L^4(\Omega)}^2 \| \nabla \phi^k_\lambda\|_{L^4(\Omega)}^4
+C_7, 
\end{split}
\end{equation}
where the positive constants $C_5, \dots, C_7$ depend only on $\beta$, $\gamma$, $n$ and $D_\infty$. 
Now, multiplying \eqref{A:est2} by $\frac{4C_2}{\varepsilon}$, adding it to \eqref{A:est0} and setting $\varpi= \frac{\varepsilon}{4 C_2}$, we end up with the differential inequality 
\begin{equation}
\label{A:est3}
    \begin{split}
    &\dt \left[\int_\Omega \frac{\gamma}{4}\abs{\nabla \phi_\lambda^k}^4 + \frac{\varepsilon}{2}\abs{\nabla \phi_\lambda^k}^2 + \frac{1}{\varepsilon} \Psi_\lambda(\phi_\lambda^k)\, \d x 
    + \frac{2\delta C_2}{\varepsilon} \| \nabla \phi^k_\lambda\|_{L^2(\Omega)}^2
     \right]
     \\
    &
    +\frac{1}{2}\norm{\nabla \mu_\lambda^k}_{L^2(\Omega)}^2 + \frac{\delta}{2} \norm{\partial_t \phi_\lambda^k}_{L^2(\Omega)}^2
    + C_2 \|\Delta \phi^k_\lambda\|_{L^2(\Omega)}^2 
    + \frac{\gamma C_2}{\varepsilon} \int_\Omega \left| \nabla\left(\abs{\nabla \phi_\lambda^k}^2\right) \right|^2 \, \d x
    \\
   &\quad  \leq 
   \overline{C} \left( 1+ \| \nabla \dtilde\|_{L^4(\Omega)}^2 
   + \| \vv\|_{L^2(\Omega)}^2 \right)\left[\int_\Omega \frac{\gamma}{4}\abs{\nabla \phi_\lambda^k}^4 + \frac{\varepsilon}{2}\abs{\nabla \phi_\lambda^k}^2 + \frac{1}{\varepsilon} \Psi_\lambda(\phi_\lambda^k)\, \d x\right]
    + \overline{C} \| \nabla \dtilde\|_{L^4(\Omega)}^2 + \overline{C},
    \end{split}
\end{equation}
where the positive constant $\overline{C}$ depends on 
$\alpha, \beta, \gamma, \delta, \varepsilon, \Theta_0, n, m, \overline{\phi_0}, D_\infty$, but are independent of $k$ and $\lambda$. By the Gronwall lemma, 
we immediately conclude that $\max_{t\in [0,T_0)} |\mathbb{A}^k(t)|$ is bounded, thereby the continuation principle for systems of ODEs yields that $\mathbb{A}$ is defined on $[0,T]$. Moreover, we deduce the estimate
\begin{equation}
\label{A:est7}
\begin{split}
&\max_{t\in [0,T]}
\int_\Omega \frac{\gamma}{4}\abs{\nabla \phi_\lambda^k(t)}^4 + \left( \frac{\varepsilon}{2} + \frac{2\delta C_2}{\varepsilon} \right) \abs{\nabla \phi_\lambda^k(t)}^2 + \frac{1}{\varepsilon} \Psi_\lambda(\phi_\lambda^k(t))\, \d x
+ \frac12 \int_0^T \| \nabla \mu_\lambda^k(s)\|_{L^2(\Omega)}^2 \, \d s
\\
& \quad 
+\frac{\delta}{2}\int_0^T \| \partial_t \phi_\lambda^k(s)\|_{L^2(\Omega)}^2 \, \d s
+ C_2 \int_0^T \| \Delta \phi^k_\lambda(s)\|_{L^2(\Omega)}^2 \, \d s
+ \frac{\gamma C_2}{\varepsilon} \int_0^T \int_\Omega
\left| \nabla\left(\abs{\nabla \phi_\lambda^k}^2\right) \right|^2 \, \d x
\\
&\leq
2\left(\int_\Omega \frac{\gamma}{4}\abs{\nabla \phi_\lambda^k(0)}^4 + \left( \frac{\varepsilon}{2} + \frac{2\delta C_2}{\varepsilon} \right) \abs{\nabla \phi_\lambda^k(0)}^2 + \frac{1}{\varepsilon} \Psi_\lambda(\phi_\lambda^k(0))\, \d x
+ \overline{C} \int_0^T \|\nabla \dtilde(s) \|_{L^4(\Omega)}^2 \, \d s + \overline{C}T\right)
\\
& \quad 
\times \mathrm{exp}\left(\overline{C}T + \overline{C}\int_0^T \| \nabla \dtilde(s)\|_{L^4(\Omega)}^2 + \|\vv(s) \|_{L^2(\Omega)}^2 \, \d s \right).
\end{split}
\end{equation}
We observe that 
$$
\| \nabla \phi_\lambda^k(0)\|_{L^4(\Omega)}
\leq C_S \| \phi_\lambda^k(0)\|_{H^2(\Omega)}
=C_S \| \mathbb{P}^k \phi_0\|_{H^2(\Omega)}
\leq 
C_S \| \phi_0\|_{H^2(\Omega)},
$$
where $C_S$ is the constant of the Sobolev embedding. Besides, from the definitions \eqref{Psi-L} and \eqref{F-L}, there exists two positive constants $\widetilde{C}^1_\lambda$ and $\widetilde{C}^2_\lambda$ such that $\Psi_\lambda(s) \leq \widetilde{C}^1_\lambda + \widetilde{C}^2_\lambda s^2$ for all $s \in \mathbb{R}$. Hence, it follows from \eqref{A:est7} that
\begin{equation}
\label{A:est8}
\begin{split}
&\max_{t\in [0,T]}
\int_\Omega \frac{\gamma}{4}\abs{\nabla \phi_\lambda^k(t)}^4 + \left( \frac{\varepsilon}{2} + \frac{2\delta C_2}{\varepsilon} \right) \abs{\nabla \phi_\lambda^k(t)}^2 + \frac{1}{\varepsilon} \Psi_\lambda(\phi_\lambda^k(t))\, \d x
+ \frac12 \int_0^T \| \nabla \mu_\lambda^k(s)\|_{L^2(\Omega)}^2 \, \d s
\\
& \quad 
+\frac{\delta}{2}\int_0^T \| \partial_t \phi_\lambda^k(s)\|_{L^2(\Omega)}^2 \, \d s
+ C_2 \int_0^T \| \Delta \phi^k_\lambda(s)\|_{L^2(\Omega)}^2 \, \d s
+ \frac{\gamma C_2}{\varepsilon} \int_0^T \int_\Omega
\left| \nabla\left(\abs{\nabla \phi_\lambda^k}^2\right) \right|^2 \, \d x
\\
& \leq
2\left( \frac{\gamma C_S^4}{4} \| \phi_0\|_{H^2(\Omega)}^4 + \left( \frac{\varepsilon}{2} + \frac{2\delta C_2}{\varepsilon} \right) \| \phi_0\|_{H^1(\Omega)}^2 +\frac{1}{\varepsilon} \left( \widetilde{C}^1_\lambda |\Omega| + \widetilde{C}^2_\lambda \| \phi_0\|_{L^2(\Omega)}^2 \right)\right.
\\
&\quad \left.
+ \overline{C} \int_0^T \|\nabla \dtilde(s) \|_{L^4(\Omega)}^2 \, \d s + \overline{C}T 
\right)
 \mathrm{exp} \left(\overline{C}T + \overline{C}\int_0^T \| \nabla \dtilde(s)\|_{L^4(\Omega)}^2 + \|\vv(s) \|_{L^2(\Omega)}^2 \, \d s\right)
=:R_1(\lambda).
\end{split}
\end{equation}
Furthermore, exploiting again \eqref{Psi-L} and \eqref{F-L}, there exists two positive constants $\widetilde{C}^3_\lambda$ and $\widetilde{C}^4_\lambda$ such that $|\Psi'_\lambda(s)| \leq \widetilde{C}^3_\lambda + \widetilde{C}^4_\lambda |s|$ for all $s \in \mathbb{R}$. 
In light of \eqref{H1:equiv}, \eqref{gal:cm} and \eqref{A:est8}, we obtain 
\begin{equation}
\label{A:est9}
\| \Psi'_\lambda(\phi^k_\lambda) \|_{L^2(0,T;L^2(\Omega))}
\leq \widetilde{C}^3_\lambda T^\frac12 |\Omega|^\frac12 + \sqrt{2} \widetilde{C}^4_\lambda  |\overline{\phi_0}| |\Omega|^\frac12 T^\frac12+ 2 \widetilde{C}^4_\lambda \frac{R_1^\frac12}{\varepsilon^\frac12}T^\frac12:=R_2(\lambda).
\end{equation}
Next, taking $i=1$ in \eqref{eq:gal_VCH_2} and observing that $\overline{\partial_t \phi^k_\lambda}= 0$, we notice that
$
\overline{\mu}= \frac{1}{\varepsilon} \overline{\Psi'_\lambda(\phi^k_\lambda)}- \frac{\alpha}{2|\Omega|}
\|\dtilde\|_{L^2(\Omega)}^2.
$
Using \eqref{A:est8} and \eqref{A:est9}, we get
\begin{equation}
\label{A:est10}
\| \overline{\mu}\|_{L^2(0,T)}
\leq 
\frac{1}{\varepsilon|\Omega|^\frac12} R_2(\lambda)
+
\frac{\alpha}{2|\Omega|} \| \dtilde\|_{L^4(0,T;L^2(\Omega))}^2=:R_3(\lambda).
\end{equation}
Then, \eqref{A:est8} and \eqref{A:est10} entail that
\begin{equation}
    \label{A:est11}
    \| \mu\|_{L^2(0,T;H^1(\Omega))} 
    \leq 2 R_1(\lambda)^\frac12 + \sqrt{2}|\Omega|^\frac12 R_3(\lambda).
\end{equation}
As an immediate consequence of \eqref{A:est8}, \eqref{A:est9} and \eqref{A:est11}, we find that (up to subsequence)
\begin{equation}
\label{A:lim1}
\begin{split}
\begin{aligned}
&\phi_\lambda^k \rightharpoonup \phi_\lambda \ &&\text{weak-star in } L^\infty(0,T;W^{1,4}(\Omega)), 
\quad 
&&&\phi_\lambda^k \rightharpoonup \phi_\lambda \ &&&&\text{weakly in } 
L^2(0,T;H^2(\Omega)),\\
&\partial_t \phi_\lambda^k \rightharpoonup \partial_t \phi_\lambda \  &&\text{weakly in } 
L^2(0,T;L^2(\Omega)),
\quad
&&&|\nabla \phi_\lambda^k|^2 \rightharpoonup g_\lambda \  &&&&\text{weakly in } 
L^2(0,T;H^1(\Omega)),\\
&\mu_\lambda^k  \rightharpoonup \mu_\lambda \   &&\text{weakly in } L^2(0,T;H^1(\Omega)).
\end{aligned}
\end{split}
\end{equation}
In addition, the Aubin-Lions compactness theorem implies that 
\begin{equation}
\label{A:lim2}
\phi^k_\lambda \rightarrow \phi_\lambda \quad \text{strongly in } L^2(0,T; W^{1,p}(\Omega)) \text{ where $p<6$ if $d=3$ and any $p \in [2,\infty)$ if $d=2$}.
\end{equation}
Then, it is easily seen from \eqref{A:lim1} and \eqref{A:lim2} that $g_\lambda= |\nabla \phi_\lambda|^2$ and
\begin{equation}
    \label{A:lim3}
    \Psi_\lambda(\phi^k_\lambda) \rightarrow \Psi_\lambda(\phi_\lambda) \quad \text{strongly in } L^2(0,T; L^2(\Omega)).
\end{equation}
Thanks to the above convergence results, we are in the position to pass to the limit as $k\rightarrow \infty$ in \eqref{eq:gal_VCH_1}-\eqref{eq:gal_VCH_2} obtaining 
\begin{align}
\label{A:eq:lam:1}
    &( \partial_t \phi_\lambda, \xi ) 
    - \left(\phi_\lambda \vv, \nabla \xi\right)
    + \left(\nabla \mu_\lambda, \nabla \xi\right)
    =0,
    \\
\label{A:eq:lam:2}
    &(\mu_\lambda, \zeta)
    =\delta(\partial_t \phi_\lambda, \zeta)
    + \left( (\varepsilon+ \gamma |\nabla \phi_\lambda|^2) \nabla \phi_\lambda, \nabla \zeta \right)
    + \frac{1}{\varepsilon} (\Psi_\lambda'(\phi_\lambda),\zeta)
    -\frac{\alpha}{2} \left( |\dtilde|^2, \zeta \right) 
    + \beta \left( (\nabla \phi_\lambda \cdot \dtilde) \dtilde, \nabla \zeta \right),
\end{align}
for any $\xi, \zeta \in H^1(\Omega)$ and almost every $t\in (0,T)$, as well as $\phi(\cdot,0)=\phi_0$ in $\Omega$.
At this point, we rewrite \eqref{A:eq:lam:2} as
$\left( (\varepsilon+ \gamma |\nabla \phi_\lambda|^2) \nabla \phi_\lambda, \nabla \zeta \right) = 
(f_\lambda, \zeta)$, where $f_\lambda= \mu_\lambda -\delta \partial_t \phi_\lambda - \frac{1}{\varepsilon} \Psi'_\lambda(\phi_\lambda)+ \frac{\alpha}{2}|\dtilde|^2 +\beta \diver \left( (\nabla \phi_\lambda \cdot \dtilde) \dtilde \right).
$
It clearly follows from the assumptions on $\dtilde$ and \eqref{A:lim1} that $f_\lambda \in L^2(0,T; L^2(\Omega))$. Therefore, an application of \cite[Theorem 2.6]{CM2019} entails that 
$\gamma |\nabla \phi_\lambda|^2 \nabla \phi_\lambda \in L^2(0,T; H^1(\Omega;\R^n))$,
which, in turn, allows us to conclude that 
\begin{equation}
\label{A:eq:lam:3}
- \diver \left( \left(\varepsilon+ \gamma |\nabla \phi_\lambda|^2\right) \nabla \phi_\lambda \right) = 
\mu_\lambda -\delta \partial_t \phi_\lambda - \frac{1}{\varepsilon} \Psi'_\lambda(\phi_\lambda)+ \frac{\alpha}{2}|\dtilde|^2 +\beta \diver \left( (\nabla \phi_\lambda \cdot \dtilde) \dtilde \right) \quad \text{a.e. in } \Omega \times (0,T).
\end{equation}
Furthermore, by the lower-semicontinuity of the norm with respect to the weak convergence and by noticing that $\Psi_\lambda(\phi^k_\lambda(0))\rightarrow \Psi_\lambda(\phi_0)$ in $L^2(\Omega)$, we infer from letting $k\rightarrow \infty$ in \eqref{A:est7} that 
\begin{equation}
\label{A:est12}
\begin{split}
&\esssup_{t\in [0,T]}
\int_\Omega \frac{\gamma}{4}\abs{\nabla \phi_\lambda(t)}^4 + \left( \frac{\varepsilon}{2} + \frac{2\delta C_2}{\varepsilon} \right) \abs{\nabla \phi_\lambda(t)}^2 + \frac{1}{\varepsilon} \Psi_\lambda(\phi_\lambda(t))\, \d x
+ \frac12 \int_0^T \| \nabla \mu_\lambda(s)\|_{L^2(\Omega)}^2 \, \d s
\\
& \quad 
+\frac{\delta}{2}\int_0^T \| \partial_t \phi_\lambda(s)\|_{L^2(\Omega)}^2 \, \d s
+ C_2 \int_0^T \| \Delta \phi_\lambda(s)\|_{L^2(\Omega)}^2 \, \d s
+ \frac{\gamma C_2}{\varepsilon} \int_0^T \int_\Omega
\left| \nabla\left(\abs{\nabla \phi_\lambda}^2\right) \right|^2 \, \d x
\\
&\leq
2\left(\int_\Omega \frac{\gamma}{4}\abs{\nabla \phi_0}^4 + \left( \frac{\varepsilon}{2} + \frac{2\delta C_2}{\varepsilon} \right) \abs{\nabla \phi_0}^2 + \frac{1}{\varepsilon} \Psi_\lambda(\phi_0)\, \d x
+ \overline{C} \int_0^T \|\nabla \dtilde(s) \|_{L^4(\Omega)}^2 \, \d s + \overline{C}T\right)
\\
& \quad 
\times \mathrm{exp}\left(\overline{C}T + \overline{C}\int_0^T \| \nabla \dtilde(s)\|_{L^4(\Omega)}^2 + \|\vv(s) \|_{L^2(\Omega)}^2 \, \d s \right)
\\
&\leq
2\left(\int_\Omega \frac{\gamma}{4}\abs{\nabla \phi_0}^4 + \left( \frac{\varepsilon}{2} + \frac{2\delta C_2}{\varepsilon} \right) \abs{\nabla \phi_0}^2 + \frac{1}{\varepsilon} \Psi(\phi_0)\, \d x
+ \overline{C} \int_0^T \|\nabla \dtilde(s) \|_{L^4(\Omega)}^2 \, \d s + \overline{C}T\right)
\\
& \quad 
\times \mathrm{exp}\left(\overline{C}T + \overline{C}\int_0^T \| \nabla \dtilde(s)\|_{L^4(\Omega)}^2 + \|\vv(s) \|_{L^2(\Omega)}^2 \, \d s \right)=:\widetilde{R_0}.
\end{split}
\end{equation}
Here we have used that $\Psi_\lambda(s)\leq \Psi(s)$ for $|s|\leq 1$ and $\|\phi_0\|_{L^\infty(\Omega)}\leq 1$.
Now we need to recover an estimate on $\mu_\lambda$ in $H^1(\Omega)$, which is uniform in $\lambda$.
We recall that 
$$
{\color{black}
\overline{\mu_\lambda}=
\frac{1}{\varepsilon}  \overline{\Psi_\lambda'(\phi_\lambda)}  - \frac{\alpha}{2|\Omega|} \| \dtilde\|_{L^2(\Omega)}^2.
}
$$
Multiplying \eqref{A:eq:lam:3} by $\phi_\lambda-\overline{\phi_\lambda}$
and integrating over $\Omega$, and exploiting \eqref{A:est12}, we have
$$
\begin{aligned}
&\int_{\Omega} \varepsilon |\nabla \phi_\lambda|^2 + \gamma | \nabla \phi_\lambda|^4 + \beta |\nabla \phi_\lambda\cdot \dtilde|^2 \, \d x + {\color{black} \frac{1}{\varepsilon}}\int_{\Omega} F_\lambda'(\phi_\lambda) (\phi_\lambda-\overline{\phi_\lambda}) \, \d x \\
&=
-\delta \int_\Omega \partial_t \phi_\lambda 
(\phi_\lambda-\overline{\phi_\lambda}) \, \d x
+\int_{\Omega} \mu_\lambda (\phi_\lambda-\overline{\phi_\lambda}) \, \d x 
+ \Theta_0 \int_{\Omega} \phi_\lambda (\phi_\lambda-\overline{\phi_\lambda}) \, \d x
- \frac{\alpha}{2} \int_{\Omega} |\dtilde|^2 (\phi_\lambda-\overline{\phi_\lambda})\, \d x
\\
&=
-\delta \int_\Omega \partial_t \phi_\lambda 
(\phi_\lambda-\overline{\phi_\lambda}) \, \d x
+\int_{\Omega} (\mu_\lambda-\overline{\mu_\lambda}) (\phi_\lambda-\overline{\phi_\lambda}) \, \d x 
+ \Theta_0 \int_{\Omega} |\phi_\lambda-\overline{\phi_\lambda}|^2 \, \d x
- \frac{\alpha}{2} \int_{\Omega} |\dtilde|^2 (\phi_\lambda-\overline{\phi_\lambda})\, \d x
\\
&\leq 
\delta \|\partial_t \phi_\lambda \|_{L^2(\Omega)}
\| \nabla \phi_\lambda\|_{L^2(\Omega)}
+
\| \nabla \mu_\lambda\|_{L^2(\Omega)}
\| \nabla \phi_\lambda\|_{L^2(\Omega)}
+ \Theta_0
\| \nabla \phi_\lambda\|_{L^2(\Omega)}^2
+\frac{\alpha}{2} \| \dtilde\|_{L^4(\Omega)}^2 \| \nabla \phi_\lambda\|_{L^2(\Omega)}
\\
&\leq 
(1+\delta) \sqrt{\frac{2\widetilde{R_0}}{\varepsilon}}\left( \|\partial_t \phi_\lambda \|_{L^2(\Omega)}
+ \| \nabla \mu_\lambda\|_{L^2(\Omega)}\right)
+ \frac{2 \Theta_0 \widetilde{R_0}}{\varepsilon}+\frac{\alpha}{2} \sqrt{\frac{2 \widetilde{R_0}}{\varepsilon}}\| \dtilde\|_{L^4(\Omega)}^2.
\end{aligned}
$$
Taking the square, integrating over $[0,T]$, and using \eqref{A:est12} once again, we arrive at
\begin{align*}
\int_0^T {\color{black}\frac{1}{\varepsilon^2}}\left(\int_{\Omega} F_\lambda'(\phi_\lambda) (\phi_\lambda-\overline{\phi_\lambda}) \, \d x\right)^2 \, \d s
\leq 
4^3 \frac{\widetilde{R_0}^2}{\varepsilon} 
\left(\frac{1}{\delta}+1\right)
+4^2 \frac{\Theta_0^2\widetilde{R_0}^2}{\varepsilon^2}
+2 \frac{\alpha^2\widetilde{R_0}^2}{\varepsilon}
\| \dtilde\|_{L^4(0,T;L^4(\Omega))}^4=:\widetilde{R_1}.
\end{align*}
In light of the inequality (see \cite{frigeri2012nonlocal, miranville2004robust})
$$
\int_{\Omega} |F_\lambda'(\phi_\lambda)|\, \d x
\leq C^1_F \int_{\Omega} F_\lambda'(\phi_\lambda) (\phi_\lambda-\overline{\phi_\lambda}) \, \d x+ C^2_F,
$$
where $C^1_F$ and $C^2_F$ are positive constants depending only on $\overline{\phi_0}$, we deduce that
\begin{equation}
    \label{A:est15}
\| F_\lambda'(\phi_\lambda)\|_{L^2(0,T;L^1(\Omega))}
\leq \left( 2 (C_F^1)^2 {\color{black}\varepsilon^2 }
\widetilde{R_1}+ 2(C_F^2)^2 T \right)^\frac12 :=\widetilde{R_2},
\end{equation}
which, in turn, entails that 
$$
\|\overline{\mu_\lambda}\|_{L^2(0,T)}
\leq 
\left( \frac{4\widetilde{R_2}^2}{\varepsilon^2 |\Omega|^2} +\frac{4\Theta_0^2}{\varepsilon^2}|\overline{\phi_0}|^2 +
\frac{\alpha^2}{|\Omega|} \| \dtilde\|_{L^4(0,T;L^4(\Omega))}^4 \right)^\frac12
=: 
\widetilde{R_3}.
$$
As a consequence, we deduce that 
\begin{equation}
    \label{A:est16}
\| \mu_\lambda\|_{L^2(0,T; H^1(\Omega))}
\leq 2 \widetilde{R_0}^\frac12 + \sqrt{2}|\Omega|^\frac12 \widetilde{R_3}=:\widetilde{R_4}.
\end{equation}
Lastly, recalling that $|\Psi'_\lambda(s)| \leq \widetilde{C}^3_\lambda + \widetilde{C}^4_\lambda |s|$ and $F_\lambda''(s)\leq \widetilde{C}^5_\lambda$ for all $s \in \mathbb{R}$, as well as $\phi_\lambda \in L^\infty(0,T; W^{1,4}(\Omega))$, 
we observe that $F'_\lambda(\phi_\lambda(t)) \in H^1(\Omega)$ for almost every $t\in (0,T)$. Then, taking $\zeta= F'_\lambda(\phi_\lambda)$ in \eqref{A:eq:lam:2}, we find
\begin{align*}
    \int_{\Omega} &F_\lambda''(\phi_\lambda)\left[\varepsilon \abs{\nabla \phi_\lambda}^2 + \gamma \abs{\nabla \phi_\lambda}^4 + \beta \abs{\nabla \phi_\lambda \cdot \dtilde}^2\right]\, \d x + \frac{1}{2\varepsilon}  \int_\Omega |F'_\lambda(\phi_\lambda)|^2 \, \d x \\
    &\quad \leq 2 \varepsilon \| \mu_\lambda\|_{L^2(\Omega)}^2
    + 2\varepsilon \delta^2 \| \partial_t \phi_\lambda\|_{L^2(\Omega)}^2 
    + 2 \varepsilon\Theta_0^2 \| \phi_\lambda\|_{L^2(\Omega)}^2
    + \frac{\varepsilon \alpha^2}{2} \| \dtilde\|_{L^4(\Omega)}^4.
\end{align*}
Therefore, integrating in time over $[0,T]$ and recalling that $\delta \in (0,1]$, we conclude that 
\begin{equation}
    \label{A:est17}
\| F'_\lambda(\phi_\lambda) \|_{L^2(0,T; L^2(\Omega))}
\leq  \left( 4\varepsilon^2 \widetilde{R_4}^2
+8\varepsilon^2 \widetilde{R_0} 
+ 4 \varepsilon^2 \Theta_0^2 
\left( \frac{4}{\varepsilon} T \widetilde{R_0}+2T|\Omega| |\overline{\phi_0}|^2 \right)
+ \varepsilon^2\alpha^2 \| \dtilde\|_{L^4(0,T;L^4(\Omega))}^4\right)^\frac12=:
\widetilde{R_5}.
\end{equation}
Since the estimates obtained in
\eqref{A:est12}, \eqref{A:est15}, \eqref{A:est16} and \eqref{A:est17} are independent of $\lambda$, by classical compactness results we derive (up to a subsequence) that 
\begin{equation}
\label{A:lim4}
\begin{split}
\begin{aligned}
&\phi_\lambda \rightharpoonup \phi \  &&\text{weak-star in } L^\infty(0,T;W^{1,4}(\Omega)),\quad
&&&\phi_\lambda \rightharpoonup \phi \ &&&&\text{weakly in } L^2(0,T;H^2(\Omega)),\\
&\phi_\lambda \rightarrow \phi \
&&\text{strongly in }
L^2(0,T; W^{1,p}(\Omega)),\quad
&&&\partial_t \phi_\lambda \rightharpoonup \partial_t \phi \  &&&&\text{weakly in } 
L^2(0,T;L^2(\Omega)),\\
&|\nabla \phi_\lambda|^2 \rightharpoonup |\nabla \phi|^2 \  &&\text{weakly in } 
L^2(0,T;H^1(\Omega)),\quad
&&&\mu_\lambda \rightharpoonup \mu \  &&&&\text{weakly in } L^2(0,T;H^1(\Omega)),
\end{aligned}
\end{split}
\end{equation}
for $p<6$ if $d=3$ and any $p \in [2,\infty)$ if $d=2$. In order to pass to the limit in the nonlinear potential $\Psi_\lambda$, we argue exactly as in \cite[Step 2, Section 3.3]{giorgini2018cahn} to deduce that
\begin{equation}
    \phi \in L^\infty(\Omega \times (0,T)) \text{ such that } |\phi(x,t)|<1 \text{ almost everywhere in } \Omega \times (0,T).
\end{equation}
Owing to this, since (up to a subsequence) $\phi_\lambda \rightarrow \phi$ almost everywhere in $\Omega \times (0,T)$, we also obtain that $\Psi'_\lambda(\phi_\lambda) \rightarrow \Psi'(\phi)$ almost everywhere in $\Omega \times (0,T)$. By Fatou's lemma, we infer from \eqref{A:est17} that $\|F'(\phi)\|_{L^2(0,T;L^2(\Omega))} < \infty$.
Owing to this, we deduce that
$$
F'_\lambda(\phi_\lambda) \rightharpoonup F'(\phi) \quad  \text{weakly in } 
L^2(0,T;L^2(\Omega)).
$$
We are now in the position to pass to the limit as $\lambda \to 0$ in \eqref{A:eq:lam:1}-\eqref{A:eq:lam:2}. 
As a result, we find
\begin{align}
\label{A:eq:ws:1}
    &( \partial_t \phi, \xi ) 
    - \left(\phi \vv, \nabla \xi\right)
    + \left(\nabla \mu, \nabla \xi\right)
    =0,
    \\
\label{A:eq:ws:2}
    &(\mu, \zeta)
    =\delta(\partial_t \phi, \zeta)
    + \left( (\varepsilon+ \gamma |\nabla \phi|^2) \nabla \phi, \nabla \zeta \right)
    + \frac{1}{\varepsilon} (\Psi'(\phi),\zeta)
    -\frac{\alpha}{2} \left( |\dtilde|^2, \zeta \right) 
    + \beta \left( (\nabla \phi \cdot \dtilde) \dtilde, \nabla \zeta \right),
\end{align}
for any $\xi, \zeta \in H^1(\Omega)$ and almost every $t\in (0,T)$. Rewriting \eqref{A:eq:ws:2} as
$$
\left( (\varepsilon+ \gamma |\nabla \phi|^2) \nabla \phi, \nabla \zeta \right) = 
(f, \zeta), \quad \text{where}\quad f= \mu -\delta \partial_t \phi- \frac{1}{\varepsilon} \Psi'(\phi)+ \frac{\alpha}{2}|\dtilde|^2 +\beta \diver \left( (\nabla \phi \cdot \dtilde) \dtilde \right) \in L^2(0,T;L^2(\Omega)),
$$
a further application of \cite[Theorem 2.6]{CM2019} gives that 
$\gamma |\nabla \phi|^2 \nabla \phi \in L^2(0,T; H^1(\Omega;\R^n))$, from which we deduce that \eqref{A:WS:2} holds.
Furthermore, by the lower semicontinuity of the norm with respect to the weak convergence, the limit functions $\phi$ and $\mu$ fulfil \eqref{A:WS_satisfies} and
\begin{equation}
    \label{A:est:WS}
    \begin{aligned}
&\esssup_{t\in [0,T]}
\| \phi(t)\|_{W^{1,4}(\Omega)}^4 
+ \int_0^T \| \mu(s)\|_{H^1(\Omega)}^2 \, \d s
+\int_0^T \| \partial_t \phi(s)\|_{L^2(\Omega)}^2 \, \d s
\\
& \quad 
+ \int_0^T \| \phi(s)\|_{H^2(\Omega)}^2 \, \d s
+ \int_0^T \int_\Omega
\left| \nabla\left(\abs{\nabla \phi}^2\right) \right|^2 \, \d x
+  \int_0^T\norm{F'(\phi(s))}_{L^2(\Omega)}^2\, \d s
\leq \widetilde{R_6}
    \end{aligned}
\end{equation}
holds for some positive constant $\widetilde{R_6}$ that is a continuous monotone function of $\widetilde{R_0}, \dots, \widetilde{R_5}$ and the parameters of the system.

In order to conclude the proof, we are left to consider the general case in which $\phi_0 \in W^{1,4}(\Omega)$ such that $\| \phi_0\|_{L^\infty(\Omega)}\leq 1$ and {\color{black} $|\overline{\phi_0}|<1$}. To this end, let $\varphi_\eta=\frac{1}{\eta^n}\varphi(\frac{\cdot}{\eta})$ for $\eta\in (0,1)$, where $\varphi$ is a radial, positive, smooth function supported in $B_{\R^n}(1)$ with $\int_{\R^n} \varphi \, \d x=1$. We define $\phi_0^\eta= \phi_0 \ast \varphi_\eta$. Clearly, $\phi_0^\eta \in C^\infty(\Omega)$ with $\| \phi_0^\eta\|_{L^\infty(\Omega)}\leq 1$ and $|\overline{\phi_0^\eta}|=|\overline{\phi_0}|<1$. Thanks to the previous analysis, for any $\eta \in (0,1)$ there exists a pair of functions $\phi_\eta, \mu_\eta : \Omega \times (0,T) \rightarrow \R$ such that 
\begin{align*}
     & \phi_\eta \in L^\infty(0,T;W^{1,4}(\Omega))\cap L^2(0,T;H^2(\Omega)),\\
    &\phi_\eta \in L^\infty(\Omega \times (0,T)) \, \text{with }
    |\phi_\eta(x,t)|<1 \, \text{a.e. in } \Omega \times (0,T),\\
    &\partial_t \phi_\eta \in L^2(0,T; L^2(\Omega)), 
    \quad F'(\phi_\eta) \in L^2(0,T;L^2(\Omega)),\\
    &\mu_\eta \in L^2(0,T;H^1(\Omega)),
\end{align*}
which satisfies
\begin{align}
\label{A:eq:eps:1}
    &( \partial_t \phi_\eta, \xi ) 
    - \left(\phi_\eta \vv, \nabla \xi\right)
    + \left(\nabla \mu_\eta, \nabla \xi\right)
    =0, \quad \forall \, \xi \in H^1(\Omega), \text{a.e. in } (0,T),
    \\
\label{A:eq:eps:2}
   &\delta \partial_t \phi_\eta- \diver \left( \left(\varepsilon+ \gamma |\nabla \phi_\eta|^2\right) \nabla \phi_\eta \right) 
+ \frac{1}{\varepsilon} \Psi'(\phi_\eta)- \frac{\alpha}{2}|\dtilde|^2 -\beta \diver \left( (\nabla \phi_\eta \cdot \dtilde) \dtilde \right)
= 
\mu_\eta  \quad \text{a.e. in } \Omega \times (0,T),
\end{align}
as well as $\phi_\eta(\cdot, 0)=\phi_0^\eta(\cdot)$ in $\Omega$. In light of \eqref{A:est:WS}, we infer that
\begin{equation}
    \label{A:est:WS_eta}
    \begin{aligned}
&\esssup_{t\in [0,T]}
\| \phi_\eta(t)\|_{W^{1,4}(\Omega)}^4 
+ \int_0^T \| \mu_\eta(s)\|_{H^1(\Omega)}^2 \, \d s
+\int_0^T \| \partial_t \phi_\eta(s)\|_{L^2(\Omega)}^2 \, \d s
\\
& \quad 
+ \int_0^T \| \phi_\eta(s)\|_{H^2(\Omega)}^2 \, \d s
+ \int_0^T \int_\Omega
\left| \nabla\left(\abs{\nabla \phi_\eta}^2\right) \right|^2 \, \d x
+  \int_0^T\norm{F'(\phi_\eta(s))}_{L^2(\Omega)}^2\, \d s
\leq \widetilde{R_5}.
    \end{aligned}
\end{equation}
Notice that the right-hand side $\widetilde{R_5}$ depends on $\phi_0^\eta$ only through $\| \phi_0^\eta\|_{W^{1,4}(\Omega)}$, $\| \phi_0^\eta\|_{L^\infty(\Omega)}$ and $|\overline{\phi_0^\eta}|$. Since $\| \phi_0^\eta\|_{W^{1,4}(\Omega)}$ is bounded as $\eta \rightarrow 0$, and $\| \phi_0^\eta\|_{L^\infty(\Omega)}\leq 1$ and 
$|\overline{\phi_0^\eta}|=|\overline{\phi_0}|<1$
for any $\eta \in (0,1)$, we conclude that all the norms on the left-hand side in \eqref{A:est:WS_eta} are bounded uniformly in $\eta$. A similar argument as the one performed above leads to the existence of a limit pair $(\phi,\mu)$ satisfying \eqref{A:WS:R1}-\eqref{A:WS:R4}, the variational formulation \eqref{A:WS:1}-\eqref{A:WS:2} and the initial condition $\phi(\cdot, 0)=\phi_0(\cdot)$ in $\Omega$, as well as the global estimate \eqref{A:WS_satisfies}. The proof is completed.
\end{proof}

\end{appendices}

\section*{Acknowledgements}
The authors wish to thank Francesco Ballarin, Andrea Cianchi and Cristiana De Filippis for fruitful discussions, {\color{black} and the two reviewers for helpful comments that improved the quality of our paper}.\\
GB is supported by the European Research Council (ERC), under the European Union's Horizon 2020 research and innovation program, through the project ERC VAREG - {\em Variational approach to the regularity of the free boundaries} (grant agreement No. 853404) and by Gruppo Nazionale per l'Analisi Matematica, la Probabilit\`a e le loro Applicazioni (GNAMPA) of Istituto Nazionale di Alta Matematica (INdAM) through the INdAM-GNAMPA project 2024 CUP E53C23001670001. GB acknowledges the MIUR Excellence Department Project awarded to the Department of Mathematics, University of Pisa, CUP I57G22000700001.\\
AG is supported by the MUR grant Dipartimento di Eccellenza 2023-2027 of Dipartimento di Matematica, Politecnico di Milano, and by Gruppo Nazionale per l'Analisi Matematica, la Probabilit\`a e le loro Applicazioni (GNAMPA) of Istituto Nazionale per l'Alta Matematica (INdAM).

\section*{Competing interests and funding}
The authors do not have any financial or non-financial interests that are directly or indirectly related to the work submitted for publication.

\section*{Data availability statement}
No further data is used in this manuscript.

\printbibliography

\end{document}